\numberwithin{equation}{section}
\crefname{assumption}{Assumption}{assumptions}
\newcommand{\bpm}{\begin{pmatrix}}
\newcommand{\epm}{\end{pmatrix}}
\DeclareMathOperator*{\esssup}{ess\,sup}
\newcommand{\eref}[1]{{\rm{(\ref{#1})}}}
\newcommand*\colvec[1]{
        \global\colveccount#1
        \begin{bmatrix}
        \colvecnext
}
\def\colvecnext#1{
        #1
        \global\advance\colveccount-1
        \ifnum\colveccount>0
                \\
                \expandafter\colvecnext
        \else
                \end{bmatrix}
        \fi
}
\newcommand {\real} {\mathbb{R}}
\newcommand {\cB} {\mathcal{B}}
\newcommand {\cG} {\mathcal{G}}
\newcommand {\cN} {\mathcal{N}}
\newcommand {\cV} {\mathcal{V}}
\newcommand {\cY} {\mathcal{Y}}
\newcommand {\cZ} {\mathcal{Z}}
\newcommand{\BA}{\ensuremath{\mathbf{A}} } %
\newcommand{\BB}{\ensuremath{\mathbf{B}} } %
\newcommand{\BC}{\ensuremath{\mathbf{C}} } %
\newcommand{\BF}{\ensuremath{\mathbf{F}} } %
\newcommand{\BH}{\ensuremath{\mathbf{H}} } %
\newcommand{\BL}{\ensuremath{\mathbf{L}} } %
\newcommand{\BQ}{\ensuremath{\mathbf{Q}} } %
\newcommand{\BR}{\ensuremath{\mathbf{R}} } %
\newcommand{\bc}{\ensuremath{\mathbf{c}}} %
\newcommand{\bd}{\ensuremath{\mathbf{d}}} %
\newcommand{\bff}{\ensuremath{\mathbf{f}}} %
\newcommand{\bg}{\ensuremath{\mathbf{g}}} %
\newcommand{\bp}{\ensuremath{\mathbf{p}}} %
\newcommand{\bq}{\ensuremath{\mathbf{q}}} %
\newcommand{\br}{\ensuremath{\mathbf{r}}} %
\newcommand{\bu}{\ensuremath{\mathbf{u}}} %
\newcommand{\bv}{\ensuremath{\mathbf{v}}} %
\newcommand{\bx}{\ensuremath{\mathbf{x}}} %
\newcommand{\by}{\ensuremath{\mathbf{y}}} %
\newcommand{\bz}{\ensuremath{\mathbf{z}}} %
\newcommand{\balpha}{\ensuremath{\mbox{\boldmath $\alpha$}}}
\newcommand {\bgamma} {\mbox{\boldmath $\gamma$}}
\newcommand {\blambda} {\mbox{\boldmath $\lambda$}}
\newcommand {\bdelta} {\boldsymbol{\delta}}
\newcommand {\bepsilon} {\mbox{\boldmath $\epsilon$}}
\newcommand {\bpsi} {\boldsymbol{\psi}}
\newcommand {\BPhi} {\mbox{\boldmath $\Phi$}}
\newcommand {\bphi} {\mbox{\boldmath $\phi$}}
\newcommand{\aall}{\mathrm{a.a.} \;}
\newcommand{\WW}{\big( W^{1, \infty}(I) \big)}
\newcommand{\LL}{\big(L^\infty(I) \big)}
\newtheorem{theorem}{Theorem}[section]
\newtheorem{lemma}[theorem]{Lemma}
\newtheorem{remark}[theorem]{Remark}
\newtheorem{assumption}[theorem]{Assumption}
\begin{document}
\title{{\Large \bf Sensitivity of Optimal Control Solutions and Quantities of Interest with Respect to Component Functions}
         \thanks{This research was supported in part by AFOSR Grant FA9550-22-1-0004 NSF Grant DMS-2231482. }
         }
\author{Jonathan R. Cangelosi
             \thanks{Department of Computational Applied Mathematics and Operations Research,
                      MS-134, Rice University, 6100 Main Street,
                     Houston, TX 77005-1892. 
                     E-mail: jrc20@rice.edu}
             \and
             Matthias Heinkenschloss
             \thanks{Department of Computational Applied Mathematics and Operations Research,
                 MS-134, Rice University, 6100 Main Street,
                 Houston, TX 77005-1892, and the Ken Kennedy Institute, Rice University.
                 E-mail: heinken@rice.edu}
        }

\date{\today}

\maketitle

\begin{abstract}
This work establishes sensitivities of the solution of an optimal control problem (OCP) and a corresponding quantity of interest (QoI) to perturbations in a state/control-dependent component function that appears in the governing ODEs and the objective function. This extends existing OCP sensitivity results, which consider the sensitivity of the OCP solution with respect to state/control-independent parameters.
 It is shown that with Carath\'eodory-type assumptions, the Implicit Function Theorem can be applied to establish continuous 
 Fr\'echet differentiability of the OCP solution with respect to the component function. 
 These sensitivities are used to develop new estimates for the change in the QoI when the component function is perturbed. 
Applicability of the theoretical results is demonstrated on a trajectory optimization problem for a hypersonic vehicle.
\end{abstract}

{\bf Key words.}
    Sensitivity analysis, 
    optimal control,
    perturbation error estimates

{\bf MSC codes.} 
	49K40, 
	49J50, 
	93C73 

\noindent

\section{Introduction}   \label{sec:intro}
We establish results  for the sensitivity of the solution of an optimal control problem (OCP) governed by ordinary
differential equations (ODEs) and of an OCP solution-dependent quantity of interest (QoI) 
with respect to perturbations in a state/control-dependent component function that appears in the governing 
ODEs or in the objective function. 
Our results extend existing OCP sensitivity results that consider the sensitivity of the OCP solution with respect to 
state/control-independent parameters, such as those in 
 \cite{DWPeterson_1974a},
\cite{AVFiacco_1983},
 \cite{ALDontchev_1983},
 \cite{HMaurer_HJPesch_1994a},
 \cite{KMalanowski_1995a},
 \cite{JFBonnans_AShapiro_2000},
 \cite{RGriesse_BVexler_2007a},
 \cite[Sec.~6.1]{MGerdts_2012a}.
Our new sensitivity results are useful in many applications where a physical system is modeled by ODEs involving component functions 
or constitutive laws that are only known experimentally at a few points or must be computed through expensive, high-fidelity computations. 
In such cases, these expensive-to-evaluate component functions are approximated by surrogate models, and these surrogate models are used
in the solution of the OCP.  As a specific example, we consider trajectory optimization for a hypersonic glide vehicle where state- and control-dependent lift, drag, and moment coefficients must ideally be computed from expensive computational fluid dynamics (CFD) computations and are approximated by surrogates constructed from a few CFD computations at select points in state/control space.
Our sensitivity results provide tools to assess the impact of surrogate errors on 
the OCP solution or a solution-dependent QoI.
To establish our sensitivity analysis, we use a  (somewhat nonstandard) superposition or Nemytskii operator to express the optimality
conditions as an operator equation in suitable Banach spaces, and then we apply the Implicit Function Theorem to establish Fr\'echet 
differentiability of the mapping from component functions to OCP solutions. 
Establishing the hypotheses of the Implicit Function Theorem is more difficult than in the parametric case, as it requires 
proving the continuous Fr\'echet differentiability of the Nemytskii operator.

We also combine our new sensitivity results with adjoint-based analysis 
to develop sensitivity-based error  estimates for a solution-dependent QoI when a pointwise bound for the error between the approximate component function used in 
the OCP and ``true'' component function is available. These new sensitivity-based error estimates could be used to refine surrogates of the component functions,
similar to what has recently been proposed in the ODE context \cite{JRCangelosi_MHeinkenschloss_2024a}, or similar to sensitivity-driven data acquisition approaches used in the OCP
context heuristically without such error bounds  \cite{JRCangelosi_MHeinkenschloss_JTNeedels_JJAlonso_2024a}, \cite{JTNeedels_JJAlonso_2024a},
or for parametrized models \cite{JHart_BvanBloemenWaanders_LHood_JParish_2023a}, \cite{JHart_BvanBloemenWaanders_2025a}.
Like traditional OCP sensitivity analysis, our new sensitivity analysis could also be applied in model predictive control
\cite{CBueskens_HMaurer_2000a}, \cite{LGruene_JPannek_2011a}, \cite{JBRawlings_DQMayne_MMDiehl_2024a}. 
Other existing work in this direction has primarily focused on leveraging 
statistical sensitivity information such as Sobol indices \cite{IMSobol_2001a},  \cite{IMSobol_SKucherenko_2009a}, 
\cite{MVohra_AAlexanderian_CSafta_SMahadevan_2019a} 
which, like   \cite{JHart_BvanBloemenWaanders_LHood_JParish_2023a}, \cite{JHart_BvanBloemenWaanders_2025a},
is done in discrete settings rather than the infinite-dimensional settings in which many dynamical systems are embedded.

Given an interval $I := (t_0, t_f)$, an initial state $x_0 \in \real^{n_x}$, and functions
\begin{align*}
	\varphi &: \real^{n_x} \times \real^{n_p} \rightarrow \real, &
	l &: I \times \real^{n_x} \times \real^{n_u} \times \real^{n_p} \times \real^{n_g} \rightarrow \real, \\
	\bff &: I \times \real^{n_x} \times \real^{n_u} \times \real^{n_p} \times \real^{n_g} \rightarrow \real^{n_x}, &
	\bg &: I \times \real^{n_x} \times \real^{n_u} \times \real^{n_p} \rightarrow \real^{n_g},
\end{align*}
we will investigate the dependence of the solution
\begin{align} \label{eq:intro:optimization-variables}
	\bx : \overline{I} \rightarrow \real^{n_x}, && \bu : I \rightarrow \real^{n_u}, && \bp \in \real^{n_p}
\end{align}
to the Bolza-form optimal control problem (OCP)
\begin{subequations} \label{eq:intro:OCP}
\begin{align}
	\min_{\bx, \bu, \bp} \quad & \varphi\big(\bx(t_f), \bp \big) + \int_{t_0}^{t_f} l\Big(t, \bx(t), \bu(t), \bp, \bg \big( t, \bx(t), \bu(t), \bp \big) \Big) \, dt \\
	\mbox{s.t.} \quad 
	& \bx'(t) = \bff \Big(t, \bx(t), \bu(t), \bp, \, \bg \big(t, \bx(t), \bu(t), \bp \big) \Big), \qquad \mbox{ almost all (a.a.) } t \in I, \\
	& \bx(t_0) = x_0,
\end{align}
\end{subequations}
on the component function $\bg$ appearing in the dynamics and the objective function.
The precise function space setting will be specified in \cref{sec:sensitivity}.
The state is denoted by $\bx$, the control is denoted by $\bu$, and parameters are denoted by $\bp$.
We often use $\bx(\cdot \, ; \bg)$, $\bu(\cdot \, ; \bg)$, $\bp(\bg)$ to emphasize that the solution of \eqref{eq:intro:OCP}  is computed with the component function $\bg$. 

In \cref{sec:sensitivity} we will specify the function space setting for \eqref{eq:intro:OCP} and prove continuous Fr\'echet differentiability of the solution mapping
\begin{equation} \label{eq:intro:solution-mapping}
	\bg \mapsto \bz(\bg) := \big( \bx(\cdot \, ; \bg), \bu(\cdot \, ; \bg), \bp(\bg), \blambda(\cdot \, ; \bg) \big)
\end{equation}
(where $\blambda$ denotes the costate),
and also the continuous Fr\'echet differentiability of a QoI 
\begin{subequations} \label{eq:intro:QoI}
\begin{equation}
	\bg \mapsto \widetilde{q}(\bg) := q\big( \bx(\cdot \, ; \bg), \bu(\cdot \, ; \bg), \bp(\bg); \bg \big),
\end{equation}
where
\begin{equation} \label{eq:intro:QoI-bolza}
	q(\bx, \bu, \bp; \bg) := \phi\big( \bx(t_f), \bp \big) + \int_{t_0}^{t_f} \ell\Big(t, \bx(t), \bu(t), \bp, \, \bg \big(t, \bx(t), \bu(t), \bp \big) \Big) \, dt,
\end{equation}
\end{subequations}
and $\bx(\cdot \, ; \bg), \bu(\cdot \, ; \bg), \bp(\bg)$ is the solution of \eqref{eq:intro:OCP}.
The QoI \eqref{eq:intro:QoI} has the same mathematical form as the objective function in \eqref{eq:intro:OCP} but may be a different quantity entirely.
 
In applications, one often desires the solution of the OCP \eqref{eq:intro:OCP} at a true component function $\bg _*$ which is not available analytically.
Instead, one constructs a computationally inexpensive approximation $\widehat{\bg}$ and solves the OCP \eqref{eq:intro:OCP} with $\bg = \widehat{\bg}$ to obtain an approximate solution.
In \cref{sec:QoI_error}, we will apply the sensitivity results of \cref{sec:sensitivity} to derive an approximate upper bound for the QoI error 
$| \widetilde{q}(\widehat{\bg}) - \widetilde{q}(\bg_*) |$ given a pointwise bound for $| \widehat{\bg} - \bg_* |$ (understood componentwise). 
This sensitivity-based error bound gives a way to assess whether the approximation $\widehat{\bg} \approx \bg_*$ is good enough to obtain a meaningful solution to \eqref{eq:intro:OCP}.
As mentioned earlier, this sensitivity-based error bound can be used to systematically refine the surrogate $\widehat{\bg}$, which will be explored in future work.

In \cref{sec:numerics}, the results of \cref{sec:sensitivity} and~\cref{sec:QoI_error} will be applied to a numerical example involving trajectory optimization for a notional hypersonic vehicle with aerodynamic coefficient models approximated by surrogates.

{\bf Notation.} 
We will use $\| \cdot \|$ to denote a vector norm on $\real^m$ (where $m$ depends on the context) or an 
induced matrix norm.
By $\mathcal{B}_R(0) \subset \real^m$ we denote the closed ball in $\real^m$ around zero with radius $R>0$.
When infinite-dimensional normed linear spaces are considered, the norm will always be specified explicitly 
using subscripts.

Given an interval $I = (t_0, t_f)$,  $\big( L^\infty(I) \big)^m$ denotes the Lebesgue space of essentially bounded functions on $I$ with
values in $\real^m$, and $\big( W^{1, \infty}(I) \big)^m$ denotes the Sobolev space of functions on $I$ with 
values in $\real^m$ that are weakly differentiable on $I$ and have essentially bounded derivative.

We typically use bold font for vector- or matrix-valued functions and regular font for scalars, vectors, matrices, and scalar-valued functions (except states, which will be boldface). For example, the function $\bx: I \rightarrow \real^{n_x}$ has values $\bx(t) \in \real^{n_x}$, and $x \in \real^{n_x}$ denotes a vector. This distinction will be useful when
studying compositions of functions. Also, when using subscripts for derivatives, regular subscripts will be used to denote partial derivatives with respect to a vector, while boldface subscripts will be used to denote Fr\'echet derivatives with respect to a function.


\section{Sensitivity Analysis} \label{sec:sensitivity}
To compute the sensitivities of a local solution of the OCP \eqref{eq:intro:OCP} we apply the
Implicit Function Theorem in Banach spaces  to the system of necessary optimality conditions.
In contrast to standard parametric sensitivity analyses, the presence of the state/control/parameter-dependent component function $\bg$ requires the use of a superposition or Nemytskii operator.
We review the problem setting for the state equation (\ref{eq:intro:OCP}b,c)
in \cref{sec:sensitivity-problem-setting}.
In \cref{sec:sensitivity-differentiability-dynamics} we establish the continuous Fr\'echet differentiability  
of Nemytskii operators associated with the right-hand side function in (\ref{eq:intro:OCP}b,c) and 
with the integrand in the objective  (\ref{eq:intro:OCP}a) or the  quantity of interest \eqref{eq:intro:QoI-bolza}.
The sensitivity of the solution of \eqref{eq:intro:OCP} with respect to $\bg$ is established in 
\cref{sec:sensitivity-OCP}.
Finally, we will use the adjoint equation approach to efficiently compute the sensitivity of
the QoI (\ref{eq:intro:QoI}a) in \cref{sec:sensitivity-QoI}.

For completeness, we state the Implicit Function Theorem in Banach spaces.
See, e.g., \cite[Thm.~15.1]{KDeimling_1985}, \cite[Thm.~2.1.14]{MGerdts_2012a},
\cite[Thm.~4.E, p.~250]{EZeidler_1995b}.
\begin{theorem}[Implicit Function Theorem] \label{thm:Sensitivity:intro:IFT}
	Let $\cZ, \cG, \cV$ be Banach spaces, let $D \subset \cZ \times \cG$ be a neighborhood of the point $(\overline{\bz}, \overline{\bg}) \in \cZ \times \cG$, and let 
	$\bpsi : D \rightarrow \cV$ 
	be an operator satisfying $\bpsi(\overline{\bz}, \overline{\bg}) = 0$. If 
	\begin{enumerate}
		\item[(i)] $\bpsi$ is continuously Fr\'echet differentiable and
		
		\item[(ii)] the partial Fr\'echet derivative $\bpsi_\bz(\overline{\bz}, \overline{\bg})$ is bijective,
	\end{enumerate}
	 then there exist neighborhoods $\cN(\overline{\bz}) \subset \cZ$, $\cN(\overline{\bg}) \subset \cG$ and a unique mapping 
	 $\bz : \cN(\overline{\bg}) \rightarrow \cN(\overline{\bz})$ that  is continuously Fr\'echet differentiable and satisfies
	$\bz(\overline{\bg}) = \overline{\bz}$ and
	$\bpsi\big(\bz(\bg), \bg \big) = 0$ for all  $\big(\bz(\bg), \bg \big) \in \cN(\overline{\bz}) \times \cN(\overline{\bg})$.
	The Fr\'echet derivative is
	\begin{equation}  \label{eq:Sensitivity:intro:sensitivity_eqn}
		\bz_\bg(\overline{\bg}) = -\bpsi_\bz(\overline{\bz}, \overline{\bg})^{-1} \, \bpsi_\bg(\overline{\bz}, \overline{\bg}).
	\end{equation}
\end{theorem}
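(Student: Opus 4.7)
The plan is to reduce Theorem~\ref{thm:Sensitivity:intro:IFT} to an application of the Banach Fixed Point Theorem by converting the equation $\bpsi(\bz, \bg) = 0$ into a fixed-point problem in $\bz$ parametrized by $\bg$. Since $\bpsi_\bz(\overline{\bz}, \overline{\bg})$ is a bijective bounded linear operator between Banach spaces, the Bounded Inverse Theorem (a consequence of the Open Mapping Theorem) guarantees that $A := \bpsi_\bz(\overline{\bz}, \overline{\bg})^{-1}$ is bounded. I would then define
$$T(\bz, \bg) := \bz - A\,\bpsi(\bz, \bg),$$
so that the zeros of $\bpsi(\cdot, \bg)$ coincide with the fixed points of $T(\cdot, \bg)$. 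By construction, $T(\overline{\bz}, \overline{\bg}) = \overline{\bz}$ and $T_\bz(\overline{\bz}, \overline{\bg}) = 0$, and the continuity of $\bpsi_\bz$ guaranteed by hypothesis (i) supplies a neighborhood on which $\| T_\bz(\bz, \bg) \| \leq 1/2$.

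Next, I would pick $r > 0$ small enough that the above operator-norm bound holds on a closed ball around $(\overline{\bz}, \overline{\bg})$ contained in $D$, and then shrink the $\cG$-neighborhood so that $\| T(\overline{\bz}, \bg) - \overline{\bz} \| \leq r/2$ for all $\bg$ in a small ball $\cN(\overline{\bg})$; this is possible since $\bg \mapsto A\,\bpsi(\overline{\bz}, \bg)$ is continuous and vanishes at $\overline{\bg}$. The mean value inequality applied to $T(\cdot, \bg)$, combined with these estimates, shows that $T(\cdot, \bg)$ is a $\tfrac{1}{2}$-contraction mapping the closed ball $\overline{B_r(\overline{\bz})}$ into itself. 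The Banach Fixed Point Theorem then produces, for each $\bg \in \cN(\overline{\bg})$, a unique fixed point $\bz(\bg) \in \overline{B_r(\overline{\bz})}$, establishing existence and local uniqueness. Lipschitz continuity of $\bz(\cdot)$ follows from the standard a-posteriori estimate for contractions combined with the continuity of $\bg \mapsto T(\overline{\bz}, \bg)$.

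To upgrade continuity to continuous Fr\'echet differentiability, I would first use openness of the invertible elements of the space of bounded linear operators from $\cZ$ to $\cV$, together with the continuity of $\bpsi_\bz$, to show (after possibly further shrinking of the neighborhoods) that $\bpsi_\bz(\bz(\bg), \bg)$ is invertible with uniformly bounded inverse. Then, from
$$0 = \bpsi(\bz(\bg+h), \bg+h) - \bpsi(\bz(\bg), \bg),$$
a first-order Taylor expansion of $\bpsi$ at $(\bz(\bg), \bg)$ together with the Lipschitz bound on $\bz$ identifies the candidate derivative
$$\bz_\bg(\bg) = -\bpsi_\bz(\bz(\bg), \bg)^{-1}\,\bpsi_\bg(\bz(\bg), \bg),$$
which reduces to \eqref{eq:Sensitivity:intro:sensitivity_eqn} at $\bg = \overline{\bg}$. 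Continuity of $\bz_\bg(\cdot)$ then follows because it is a composition of the continuous maps $\bg \mapsto \bz(\bg)$, $\bpsi_\bz$, $\bpsi_\bg$, and operator inversion on the open set of invertible operators.

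The step I expect to be the main obstacle is controlling the Taylor remainder uniformly along the perturbed solution $\bz(\bg+h)$: establishing that the nonlinear remainder is $o(\|h\|_\cG)$ requires combining the Lipschitz estimate on $\bz$ with uniform continuity of $\bpsi_\bz$ on a small neighborhood of $(\overline{\bz},\overline{\bg})$, which is delicate because both arguments of $\bpsi$ vary simultaneously. Once this estimate is in place, the remaining arguments are routine applications of continuity and the algebra of bounded linear operators.
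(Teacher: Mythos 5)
The paper does not prove this theorem; it simply cites it as a standard result (Deimling, Gerdts, Zeidler). Your proposal is a correct rendition of the standard contraction-mapping proof found in those references: you correctly invoke the Bounded Inverse Theorem to get the boundedness of $\bpsi_\bz(\overline{\bz},\overline{\bg})^{-1}$, set up $T(\bz,\bg) = \bz - A\bpsi(\bz,\bg)$ with $T_\bz(\overline{\bz},\overline{\bg})=0$, use continuity of $\bpsi_\bz$ and the mean value inequality to get a self-mapping $\tfrac12$-contraction on a ball, obtain existence/uniqueness from the Banach Fixed Point Theorem, and then upgrade to $C^1$ via the Lipschitz bound, the openness of invertible operators in $\mathscr{L}(\cZ,\cV)$, and a first-order expansion of $\bpsi$ whose remainder is $o(\|h\|)$ thanks to the Lipschitz estimate $\|\bz(\bg+h)-\bz(\bg)\|\le L\|h\|$. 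The concern you flag at the end -- uniformity of the Taylor remainder when both arguments vary -- is exactly handled by the Lipschitz bound on $\bz(\cdot)$ plus the definition of Fr\'echet differentiability of $\bpsi$ at the single point $(\bz(\bg),\bg)$; no separate uniform continuity of $\bpsi_\bz$ is needed for differentiability at a fixed $\bg$ (it only enters when proving continuity of $\bg\mapsto\bz_\bg(\bg)$, where it follows from composing continuous maps as you describe). Your argument is complete and is essentially the proof the paper implicitly relies on by citation.
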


\subsection{Problem Setting}   \label{sec:sensitivity-problem-setting}
We begin with the state equation (\ref{eq:intro:OCP}b,c).
To allow for control inputs $\bu$ that are discontinuous in time (e.g., bang-bang controls), we consider the state equation
 in the sense of Carath\'eodory. 
 First, consider the  initial value problem 
\begin{equation} \label{eq:Sensitivity:caratheodory:IVP-Caratheodory}
\begin{aligned}
	\bx'(t) &= \widetilde{\bff} \big( t, \bx(t) \big), \qquad \aall t \in I, \\
	\bx(t_0) &= x_0.
\end{aligned}
\end{equation}
The following conditions are used to establish existence and uniqueness of solutions to \eqref{eq:Sensitivity:caratheodory:IVP-Caratheodory} in suitable function spaces.
\begin{assumption} \label{as:Sensitivity:caratheodory:caratheodory-conditions}
	Let the function $\widetilde{\bff} : I \times \real^{n_x} \rightarrow \real^{n_x}$ satisfy the following:
	\begin{itemize}
		\item[(i)] $\widetilde{\bff}$ is measurable in $t \in I$ for each $x \in \real^{n_x}$, it is continuous in $x \in \real^{n_x}$ for almost all $t \in I$, and there exists an integrable function $m : I \rightarrow \real$ such that
		\[
			\| \widetilde{\bff}(t, x) \| \leq m(t), \qquad \aall t \in I \mbox{ and all } x \in \real^{n_x};
		\]
		
		\item[(ii)] There exists an integrable function $l : I \rightarrow \real$ such that
	\[
		\| \widetilde{\bff}(t, x_1) - \widetilde{\bff}(t, x_2) \| \leq l(t) \| x_1 - x_2 \|, \qquad \aall t \in I \mbox{ and all } x_1, x_2 \in \real^{n_x};
	\]
	
		\item[(iii)] The function $m$ in condition (i) belongs to $L^\infty(I)$.
	\end{itemize}
\end{assumption}

\begin{remark} \label{rk:Sensitivity:caratheodory:bounded-set}
     \cref{as:Sensitivity:caratheodory:caratheodory-conditions} may seem restrictive given that $\real^{n_x}$ is unbounded. 
      However, it is typical in applications to only encounter states within a (sufficiently large) ball $\cB_R(0) \subset \real^{n_x}$, 
      and in such cases it suffices that the restriction of $\widetilde{\bff}$ to $I \times \cB_R(0)$ admit an extension to 
      $I \times \real^{n_x}$ with the required properties.
\end{remark}

\begin{theorem} \label{thm:Sensitivity:caratheodory:caratheodory}
	If \cref{as:Sensitivity:caratheodory:caratheodory-conditions}~(i) holds, then
     the IVP \eqref{eq:Sensitivity:caratheodory:IVP-Caratheodory} has a solution $\bx \in \big( W^{1, 1}(I) \big)^{n_x}$. 
     If \cref{as:Sensitivity:caratheodory:caratheodory-conditions}~(i)-(ii) holds, then
     the IVP \eqref{eq:Sensitivity:caratheodory:IVP-Caratheodory} has a unique solution $\bx \in \big( W^{1, 1}(I) \big)^{n_x}$. 
     If \cref{as:Sensitivity:caratheodory:caratheodory-conditions} (i)-(iii) holds, then
     the IVP \eqref{eq:Sensitivity:caratheodory:IVP-Caratheodory} has a unique solution $\bx \in \WW^{n_x}$.
\end{theorem}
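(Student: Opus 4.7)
The plan is to rewrite the IVP \eqref{eq:Sensitivity:caratheodory:IVP-Caratheodory} as the integral equation
\[
	\bx(t) = x_0 + \int_{t_0}^t \widetilde{\bff}\big(s, \bx(s)\big) \, ds, \qquad t \in \overline{I},
\]
observe that $\big( W^{1,1}(I) \big)^{n_x}$ coincides with the absolutely continuous $\real^{n_x}$-valued functions on $\overline{I}$ (so a fixed point of the integral operator furnishes a $W^{1,1}$ solution), and then handle existence by a Schauder fixed-point argument, uniqueness by Gronwall, and the $W^{1,\infty}$ refinement by reading off a pointwise bound on $\bx'$.

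For existence under \cref{as:Sensitivity:caratheodory:caratheodory-conditions}~(i), I would define $T : C(\overline{I}; \real^{n_x}) \to C(\overline{I}; \real^{n_x})$ by $T\bx(t) := x_0 + \int_{t_0}^t \widetilde{\bff}(s, \bx(s)) \, ds$, with the convex closed set
\[
	K := \Big\{ \bx \in C(\overline{I}; \real^{n_x}) \; : \; \|\bx(t) - x_0\| \leq {\textstyle \int_{t_0}^t m(s) \, ds} \;\; \forall t \in \overline{I} \Big\}.
\]
The Carath\'eodory hypothesis guarantees that $s \mapsto \widetilde{\bff}(s, \bx(s))$ is measurable for every $\bx \in C$ (classical Scorza--Dragoni/approximation by simple functions), and domination by $m \in L^1(I)$ makes $T\bx$ well defined and absolutely continuous. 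The bound $\|T\bx(t_2) - T\bx(t_1)\| \leq \int_{t_1}^{t_2} m(s)\, ds$ yields equicontinuity of $T(K)$ via absolute continuity of the Lebesgue integral, so $T(K) \subset K$ is relatively compact by Arzel\`a--Ascoli. Continuity of $T$ on $K$ follows by dominated convergence: if $\bx_n \to \bx$ uniformly, then $\widetilde{\bff}(s, \bx_n(s)) \to \widetilde{\bff}(s, \bx(s))$ a.e.\ in $s$ (continuity in the state variable) with uniform integrable majorant $m$. Schauder's fixed-point theorem then furnishes $\bx \in K$ with $T\bx = \bx$, i.e., a $W^{1,1}$ Carath\'eodory solution.

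Uniqueness under \cref{as:Sensitivity:caratheodory:caratheodory-conditions}~(ii) is standard: if $\bx_1, \bx_2$ both solve the integral equation, then $\phi(t) := \|\bx_1(t) - \bx_2(t)\|$ satisfies $\phi(t) \leq \int_{t_0}^t l(s)\, \phi(s)\, ds$ with $l \in L^1(I)$, so Gronwall's inequality gives $\phi \equiv 0$. Finally, under \cref{as:Sensitivity:caratheodory:caratheodory-conditions}~(iii), the solution $\bx$ produced above satisfies $\|\bx'(t)\| = \|\widetilde{\bff}(t, \bx(t))\| \leq m(t)$ for a.a.\ $t \in I$ with $m \in L^\infty(I)$, hence $\bx' \in \big( L^\infty(I) \big)^{n_x}$ and $\bx \in \WW^{n_x}$. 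The main technical obstacle is really in the existence step, namely verifying that $s \mapsto \widetilde{\bff}(s, \bx(s))$ is measurable whenever $\bx$ is only continuous (not, say, simple), and then rigorously justifying continuity of $T$ on $K$; both rest on the Carath\'eodory structure plus dominated convergence, so the bookkeeping reduces to standard nonlinear analysis once the functional setup is in place.
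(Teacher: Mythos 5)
Your argument is correct, but note that the paper's proof of the first two conclusions is a one-line citation to Filippov's Theorems~1--2, and your proposal essentially reproves those cited results from scratch. You run the classical Carath\'eodory argument: Schauder's fixed point theorem applied to the integral operator $T$ on the closed convex set $K$ (with measurability of $s\mapsto\widetilde{\bff}(s,\bx(s))$ obtained by approximating $\bx$ by simple functions and invoking Carath\'eodory continuity, Arzel\`a--Ascoli for relative compactness via the integrable majorant $m$, and dominated convergence for continuity of $T$), followed by Gronwall for uniqueness. The final step --- reading off $\|\bx'(t)\|\le m(t)$ a.e.\ with $m\in L^\infty(I)$ to conclude $\bx\in\WW^{n_x}$ --- is exactly what the paper does. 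So this is not a different route, but a self-contained expansion of the same underlying argument; it is more informative but does more work than the paper's deference to the reference. One thing worth flagging explicitly: your construction gives \emph{global} existence on $\overline{I}$ in a single step precisely because Assumption~(i) gives $\|\widetilde{\bff}(t,x)\|\le m(t)$ uniformly in $x\in\real^{n_x}$, which is what makes $T(K)\subset K$ hold; the more common local Carath\'eodory hypothesis (bound only on compact $x$-sets) would require an additional continuation argument that your writeup does not need but also does not acknowledge as having been sidestepped.
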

\begin{proof}
      The first two parts of the theorem follow from \cite[Thm.~1-2]{AFFilippov_1988a}. 
      The last part simply follows from the fact that $\big\| \widetilde{\bff}\big(t, \bx(t) \big) \big\| \leq m(t)$, $\aall t \in I$,
      where $m \in L^\infty(I)$, so $\bx'$ is essentially bounded, and thus $\bx \in \WW^{n_x}$.
\end{proof}

With appropriate assumptions on $\bff$ and $\bg$, \cref{thm:Sensitivity:caratheodory:caratheodory} may be applied to establish existence and uniqueness of solutions for  (\ref{eq:intro:OCP}b,c) by taking
\begin{equation} \label{eq:Sensitivity:caratheodory:tildef=f}
	\widetilde{\bff}(t, x) := \bff \big( t, x, \bu(t), \bp, \bg(t, x, \bu(t), \bp ) \big),
\end{equation}
where $\bu \in \LL^{n_u}$ and $\bp \in \real^{n_p}$ are given. This motivates the identification of appropriate assumptions for $\bff$ and the selection of a suitable Banach space for the functions $\bg$ appearing in (\ref{eq:intro:OCP}b,c).

In what follows the notation $y = (x, u, p)$ will be used to denote state-control-parameter triples in
\[
	\real^{n_y} := \real^{n_x} \times \real^{n_u} \times \real^{n_p}.
\]
The set of model functions $\bg$ appearing in (\ref{eq:intro:OCP}b,c) will be given by the Banach space
\begin{subequations} \label{eq:Sensitivity:caratheodory:g-space}
\begin{align}
	\big(\cG^s(I)\big)^{n_g} := \big\{ \bg &: I \times \real^{n_y} \rightarrow \real^{n_g} \; : \; 
	                    \bg( t, y ) \mbox{ is $s$-times continuously partially }   \nonumber \\
	                   & \mbox{differentiable  with respect to } y \in \real^{n_y}  \mbox{ for } \aall t \in I,   \nonumber \\
                           &   \mbox{is measurable in $t$ for each $y \in \real^{n_y}$, and }  
                             \| \bg \|_{(\cG^s(I))^{n_g}}  < \infty \big\},
\end{align}
where
\begin{equation} \label{eq:Sensitivity:caratheodory:g-space-norm}
	\| \bg \|_{(\cG^s(I))^{n_g}} := \sum_{n=0}^s \; \esssup_{t \in I} \; \sup_{y \in \real^{n_y}} \left\| \frac{\partial^n}{\partial y^n} \bg(t, y) \right\|
\end{equation}
\end{subequations}
for $s = 3$. The $s = 2$ case will also be useful in the sensitivity analysis. Note that $\big(\cG^3(I)\big)^{n_g}$ is continuously embedded in $\big(\cG^2(I)\big)^{n_g}$, so all differentiability results with respect to $\bg \in \big(\cG^2(I)\big)^{n_g}$ will also hold for $\bg \in \big(\cG^3(I)\big)^{n_g}$.
\begin{remark} \label{rk:Sensitivity:caratheodory:bounded-set-g}
	The assumptions on $\bg \in \big(\cG^s(I)\big)^{n_g}$ may seem strong at first; 
	however, just like in Remark~\ref{rk:Sensitivity:caratheodory:bounded-set}, it often suffices in 
	applications for $\bg(t, y)$ to be specified for almost all $t \in I$ and all points $y$ within some 
	sufficiently large ball $\cB_R(0) \subset \real^{n_y}$ and admit an extension to the whole space with the required properties.
\end{remark}

In the following, the notation $\bg_y(t, y) \in \real^{n_g \times n_y}$ denotes the partial Jacobian of $\bg$ with respect to $y$ 
at $(t, y) \in I \times \real^{n_y}$,
and $\bg_{yy}(t, y) \in \real^{n_g \times n_y \times n_y}$ denotes the tensor of partial Hessians of the components of $\bg$ 
with respect to $y$ at $(t, y) \in I \times \real^{n_y}$.
Three-times continuous differentiability of $\bg \in \big(\cG^3(I)\big)^{n_g}$ is needed to establish sensitivities for the solution of
the optimal control problem \eqref{eq:intro:OCP}. Fewer assumptions are needed to establish the existence and uniqueness of 
a solution of the state equation (\ref{eq:intro:OCP}b,c), or the characterization of the solution of the optimal control problem \eqref{eq:intro:OCP}.

The following assumption on $\bff$ ensures the existence and uniqueness of solutions to (\ref{eq:intro:OCP}b,c) in $\WW^{n_x}$.
\begin{assumption} \label{as:Sensitivity:caratheodory:IVP_unique_solution}
	Let the function $\bff : I \times \real^{n_y} \times \real^{n_g} \rightarrow \real^{n_x}$ satisfy the following:
   \begin{itemize}
   \item[(i)] $\bff$ is measurable in $t \in I$ for each $y \in \real^{n_y}$ and $g  \in \real^{n_g}$, 
   	it is continuous in $y \in \real^{n_y}$ and $g \in \real^{n_g}$ for almost all $t \in I$,
      and there exists an integrable function $m_f : I \rightarrow \real$ such that $$\| \bff( t, y, g ) \| \le m_f(t) (1 + \| g \|), \qquad \aall t \in I \mbox{ and all } y \in \real^{n_y}, g \in \real^{n_g}; $$ 
      
   \item[(ii)]   There exists an integrable function $l_f : I \rightarrow \real$ such that 
   \begin{align*}
   \| \bff( t, y_1, g_1 ) -  \bff( t, y_2, g_2 )  \| &\le l_f(t) ( \| y_1 - y_2  \| + \| g_1 - g_2  \| ), \\ &  \qquad \aall t \in I \mbox{ and all } y_1, y_2 \in \real^{n_y},  g_1, g_2 \in \real^{n_g}.
   \end{align*}
   \item[(iii)] The function $m_f$ in condition (i) belongs to $L^\infty(I)$.
\end{itemize}
\end{assumption}

\begin{theorem} \label{thm:Sensitivity:caratheodory:IVP_unique_solution}
     If \cref{as:Sensitivity:caratheodory:IVP_unique_solution}~(i) holds, then given any $\bg \in \big(\cG^1(I)\big)^{n_g}$
     the IVP (\ref{eq:intro:OCP}b,c) has a solution $\bx(\cdot \, ; \bg) \in \big( W^{1, 1}(I) \big)^{n_x}$. 
     If \cref{as:Sensitivity:caratheodory:IVP_unique_solution}~(i)-(ii) holds, then given any $\bg \in \big(\cG^1(I)\big)^{n_g}$
     the IVP (\ref{eq:intro:OCP}b,c) has a unique solution $\bx(\cdot \, ; \bg) \in \big( W^{1, 1}(I) \big)^{n_x}$. 
     If \cref{as:Sensitivity:caratheodory:IVP_unique_solution} (i)-(iii) holds, then given any $\bg \in \big(\cG^1(I)\big)^{n_g}$
     the IVP (\ref{eq:intro:OCP}b,c) has a unique solution $\bx(\cdot \, ; \bg) \in \WW^{n_x}$.
\end{theorem}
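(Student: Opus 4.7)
The natural strategy is to reduce the statement to \cref{thm:Sensitivity:caratheodory:caratheodory} by fixing $\bu \in \LL^{n_u}$, $\bp \in \real^{n_p}$, and $\bg \in \big(\cG^1(I)\big)^{n_g}$, defining
\[
    \widetilde{\bff}(t, x) := \bff\big(t, x, \bu(t), \bp, \bg(t, x, \bu(t), \bp)\big),
\]
and verifying that \cref{as:Sensitivity:caratheodory:caratheodory-conditions}~(i), (i)--(ii), or (i)--(iii) holds whenever the corresponding part of \cref{as:Sensitivity:caratheodory:IVP_unique_solution} holds. Each of the three claims of the theorem then follows from the corresponding part of \cref{thm:Sensitivity:caratheodory:caratheodory}.

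\textbf{Checking (i).} For measurability in $t$, I would use that $\bg$ is Carath\'eodory (measurable in $t$, continuous in $y$) and $\bu$ is measurable, so $t \mapsto \bg(t, x, \bu(t), \bp)$ is measurable; combining with the Carath\'eodory property of $\bff$ in $(t, y, g)$ gives measurability of $t \mapsto \widetilde{\bff}(t, x)$. For continuity in $x$ at a.a.\ $t$, I would use that $\bg(t, \cdot)$ is $C^1$ (hence continuous) and $\bff(t, \cdot, \cdot, \cdot, \cdot)$ is continuous. For the integrable bound, note that the definition of the norm in \eqref{eq:Sensitivity:caratheodory:g-space-norm} (with $n=0$) gives
\[
    \| \bg(t, x, \bu(t), \bp) \| \le \| \bg \|_{(\cG^1(I))^{n_g}} \quad \mbox{for a.a.\ } t \in I \mbox{ and all } x \in \real^{n_x},
\]
so by \cref{as:Sensitivity:caratheodory:IVP_unique_solution}~(i),
\[
    \| \widetilde{\bff}(t, x) \| \le m_f(t) \big( 1 + \| \bg \|_{(\cG^1(I))^{n_g}} \big) =: m(t),
\]
which is integrable.

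\textbf{Checking (ii) and (iii).} For the Lipschitz condition, since $\bg_y$ exists and is essentially bounded in the norm \eqref{eq:Sensitivity:caratheodory:g-space-norm}, the mean value theorem gives
\[
    \| \bg(t, x_1, \bu(t), \bp) - \bg(t, x_2, \bu(t), \bp) \| \le \| \bg \|_{(\cG^1(I))^{n_g}} \, \| x_1 - x_2 \|
\]
for a.a.\ $t \in I$. Combining with \cref{as:Sensitivity:caratheodory:IVP_unique_solution}~(ii) yields
\[
    \| \widetilde{\bff}(t, x_1) - \widetilde{\bff}(t, x_2) \| \le l_f(t) \big( 1 + \| \bg \|_{(\cG^1(I))^{n_g}} \big) \| x_1 - x_2 \|,
\]
giving the Lipschitz bound with integrable $l(t) := l_f(t)\big(1 + \|\bg\|_{(\cG^1(I))^{n_g}}\big)$. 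Finally, if $m_f \in L^\infty(I)$, then so is $m$, verifying (iii). With \cref{as:Sensitivity:caratheodory:caratheodory-conditions} verified at the appropriate level, \cref{thm:Sensitivity:caratheodory:caratheodory} delivers existence in $\big( W^{1,1}(I) \big)^{n_x}$, uniqueness in $\big( W^{1,1}(I) \big)^{n_x}$, and existence and uniqueness in $\WW^{n_x}$, respectively, which are exactly the three conclusions of the theorem.

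\textbf{Main obstacle.} The only nontrivial point is the measurability of the composition $t \mapsto \bff(t, x, \bu(t), \bp, \bg(t, x, \bu(t), \bp))$ for fixed $x$, since both $\bu$ and $\bg$ contribute Carath\'eodory dependence on the intermediate variables; I would invoke the standard fact that Carath\'eodory functions preserve measurability when composed with measurable functions. Everything else is a bookkeeping exercise exploiting the finiteness of $\|\bg\|_{(\cG^1(I))^{n_g}}$ to absorb the $\bg$-dependence into constants that are integrable (or essentially bounded) multiples of $m_f$ and $l_f$.
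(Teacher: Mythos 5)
Your proof is correct and takes essentially the same route as the paper: fix $\bu,\bp,\bg$, substitute into \eqref{eq:Sensitivity:caratheodory:tildef=f}, verify each level of \cref{as:Sensitivity:caratheodory:caratheodory-conditions} by absorbing $\|\bg\|_{(\cG^1(I))^{n_g}}$ into $m_f$ and $l_f$, and invoke \cref{thm:Sensitivity:caratheodory:caratheodory}. You are a bit more explicit than the paper about the Carath\'eodory-composition argument for measurability of $t\mapsto\widetilde{\bff}(t,x)$, which the paper handles more tersely, but the substance is identical.
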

\begin{proof}
   Taking \eqref{eq:Sensitivity:caratheodory:tildef=f}, we proceed similarly to \cref{thm:Sensitivity:caratheodory:caratheodory}.

      Let \cref{as:Sensitivity:caratheodory:IVP_unique_solution} (i) hold. 
      Since $\bff\big( t, y, \bg(t, y) \big)$ is measurable in its first argument and continuous in its third argument,
      which itself is measurable in $t$, $\widetilde{\bff}$ is measurable in $t$. 
      Similarly, $\bff\big( t, y, \bg(t, y) \big)$ is continuous in its second and third arguments, the latter of which 
      is itself continuous in $y$, so $\widetilde{\bff}$ is continuous in $y$. Finally, we have 
    \begin{equation} \label{eq:Sensitivity:caratheodory:nemytskii-L-infty}
    	\big\| \bff \big( t, y, \bg(t, y) \big) \big\| \leq m_f(t) (1 + \| \bg(t, y) \| ) \leq m_f(t) (1 + \| \bg \|_{(\cG^1(I))^{n_g}})
    \end{equation}
    for $\aall t \in I$ and all $y \in \real^{n_y}$,  so
    $m(t) := m_f(t) ( 1 + \| \bg \|_{(\cG^1(I))^{n_g}} )$
     satisfies \cref{as:Sensitivity:caratheodory:caratheodory-conditions} (i). 
     Thus, the first part of the theorem follows from \cref{thm:Sensitivity:caratheodory:caratheodory}.

     Next, let \cref{as:Sensitivity:caratheodory:IVP_unique_solution} (i)-(ii) hold, and observe that
    \begin{align*}
    	\big\| \bff \big( t, y_1, \bg(t, y_1) \big) - \bff \big( t, y_2, \bg(t, y_2) \big) \big\| 
	&\leq l_f(t) (\| y_1 - y_2 \| + \| \bg(t, y_1) - \bg(t, y_2) \|) \\ &\leq l_f(t) (\| y_1 - y_2 \| + \| \bg \|_{(\cG^1(I))^{n_g}} \| y_1 - y_2 \|) 
    \end{align*}
    for $\aall t \in I$ and all $y_1, y_2 \in \real^{n_y}$.
    This implies that $l(t) := l_f(t) (1 + \| \bg \|_{(\cG^1(I))^{n_g}})$ satisfies \cref{as:Sensitivity:caratheodory:IVP_unique_solution} (ii), 
     so the second part of the theorem follows from \cref{thm:Sensitivity:caratheodory:caratheodory}.

      Finally, the last part of the theorem follows from \cref{thm:Sensitivity:caratheodory:caratheodory} since 
      $m$ as defined previously is a constant times $m_f \in L^\infty(I)$.
\end{proof}

\subsection{Fr\'echet Differentiability of the Dynamics}    \label{sec:sensitivity-differentiability-dynamics}
We consider the Fr\'echet differentiability  of a map associated with the right-hand side function in (\ref{eq:intro:OCP}b,c) and of
a map associated  with a scalar-valued function
\[
           \ell : I \times \real^{n_x} \times \real^{n_u} \times \real^{n_p} \times \real^{n_g} \rightarrow \real.
\]
This function could be the integral of the objective  (\ref{eq:intro:OCP}a), i.e., $\ell = l$, or it could be associated with
a quantity of interest \eqref{eq:intro:QoI-bolza}.

In the following, we will use 
\[
	\by(t) := \big( \bx(t), \bu(t), \bp \big), \qquad \aall t \in I
\]
and the space
\[
	\cY^\infty := \WW^{n_x} \times \LL^{n_u} \times \real^{n_p}
\]
for the optimization variables $(\bx, \bu, \bp)$ outlined in \eqref{eq:intro:optimization-variables}.

Let $s \ge 1$ and let \cref{as:Sensitivity:caratheodory:IVP_unique_solution} (i)-(iii) hold with a corresponding assumption for $\ell$.
Define the Nemytskii (superposition) operators
\begin{subequations}   \label{eq:Sensitivity:nemytskii:nemytskii-FL}
\begin{align}
 \label{eq:Sensitivity:nemytskii:nemytskii-F}
   \cY^\infty \times \big(\cG^s(I)\big)^{n_g} \ni (\by, \bg) \mapsto \BF(\by, \bg) 
    &:= \bff \Big( \cdot, \by(\cdot), \bg \big( \cdot, \by(\cdot) \big) \Big) \in \LL^{n_x}, \\
  \label{eq:Sensitivity:nemytskii:nemytskii-L}
    \cY^\infty \times \big(\cG^s(I)\big)^{n_g} \ni (\by, \bg) \mapsto \BL(\by, \bg) 
    &:= \ell \Big( \cdot, \by(\cdot), \bg \big( \cdot, \by(\cdot) \big) \Big) \in L^\infty(I).
\end{align}
\end{subequations}
Note that under \cref{as:Sensitivity:caratheodory:IVP_unique_solution} (i)-(iii)
the outputs of the operators are essentially bounded.

To conduct sensitivity analysis on solutions of \eqref{eq:intro:OCP} and the resulting QoIs \eqref{eq:intro:QoI}, one 
requires continuous Fr\'echet differentiability of $\BF$ and $\BL$. The following theorem establishes 
continuous Fr\'echet differentiability for a class of Nemytskii operators that include \eqref{eq:Sensitivity:nemytskii:nemytskii-FL},
generalizing results on (continuous) Fr\'echet differentiability of Nemytskii operators in $L^\infty$ spaces such as \cite[Lem.~4.12,~4.13]{FTroeltzsch_2010a} to our setting.
\begin{theorem} \label{thm:Sensitivity:nemytskii:nemytskii-derivative}
	Let $\bphi : I \times \real^{n_y} \times \real^{n_g} \rightarrow \real^{n_\phi}$ be a given function such that 
	$\bphi(t, y, g)$ is measurable in $t \in I$ for all $y \in \real^{n_y}$, $g \in \real^{n_g}$,  is continuously 
	partially differentiable with respect to $y \in \real^{n_y}$ and $g  \in \real^{n_g}$ for a.a.\  $t \in I$,
	and satisfies the following conditions:
   \begin{itemize}                   
   \item[(i)] \emph{Boundedness:}  There exists $K$ such that $ \| \bphi_y( t, 0, 0 )  \| \le K$ and 
                    $ \| \bphi_g( t, 0, 0 )  \| \le K$  for  a.a.\ $t \in I$;
                   
   \item[(ii)]  \emph{Local Lipschitz continuity:}  For all $R > 0$ there exists $L(R)$ such that
    \begin{align*}   
    	&\| \bphi_x(t, y_1, g_1) - \bphi_x(t, y_2, g_2) \| + \| \bphi_g(t, y_1, g_1) - \bphi_g(t, y_2, g_2) \| \\
          &\qquad \leq L(R) ( \| y_1 - y_2 \| + \| g_1 - g_2 \| ), \\
          &\qquad\qquad\qquad\qquad\qquad\qquad  \aall  t \in I  \mbox{ and all } y_1, y_2 \in \cB_R(0), \; g_1, g_2 \in \cB_R(0).
    \end{align*}
    \end{itemize}
	Then the Nemytskii operator $\BPhi : \LL^{n_y} \times \big(\cG^2(I)\big)^{n_g} \rightarrow \LL^{n_\phi}$ given by
	\begin{equation} \label{eq:Sensitivity:nemytskii:nemytskii-operator}
		\BPhi(\by, \bg) := \bphi \Big( \cdot, \by(\cdot), \bg \big( \cdot, \by(\cdot) \big) \Big)
	\end{equation}
	is continuously Fr\'echet differentiable, and its derivative is given by
	\begin{equation} \label{eq:Sensitivity:nemytskii:Frechet_deriv}
	\begin{aligned}
      	&[\BPhi'(\by, \bg) (\delta \by, \delta \bg)](t)   \\
	&= \Big[ \bphi_y\Big( t, \by(t), \bg\big(t, \by(t) \big) \Big) 
	              + \bphi_g\Big( t, \by(t), \bg\big(t, \by(t) \big) \Big) \bg_y\big(t, \by(t) \big) \Big] \delta \by(t)    \\ 
	 &\qquad + \bphi_g\Big( t, \by(t), \bg\big( t,  \by(t) \big) \Big) \delta \bg\big(t,  \by(t) \big).
    \end{aligned}
    \end{equation}
\end{theorem}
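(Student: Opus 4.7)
The plan is to factor $\BPhi$ as a composition and apply the chain rule. Introduce the evaluation operator $\BG : (L^\infty(I))^{n_y} \times (\cG^2(I))^{n_g} \to (L^\infty(I))^{n_g}$ defined by $\BG(\by, \bg)(t) := \bg(t, \by(t))$ and the standard $L^\infty$ Nemytskii operator $\widetilde{\BPhi} : (L^\infty(I))^{n_y} \times (L^\infty(I))^{n_g} \to (L^\infty(I))^{n_\phi}$ defined by $\widetilde{\BPhi}(\by, \bh)(t) := \bphi(t, \by(t), \bh(t))$, so that $\BPhi(\by, \bg) = \widetilde{\BPhi}\bigl(\by, \BG(\by, \bg)\bigr)$. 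Under assumptions~(i)--(ii), continuous Fr\'echet differentiability of $\widetilde{\BPhi}$ follows from the classical $L^\infty$ Nemytskii calculus as in \cite[Lem.~4.12,~4.13]{FTroeltzsch_2010a}, with partial derivatives acting pointwise via $\bphi_y$ and $\bphi_g$; the vector-valued generalization is straightforward.

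The core of the proof is then the continuous Fr\'echet differentiability of $\BG$. Its partial derivative in $\bg$ is immediate from linearity: $\BG_\bg(\by, \bg)\, \delta\bg = \delta\bg(\cdot, \by(\cdot))$, bounded in $L^\infty$ by $\|\delta\bg\|_{(\cG^2(I))^{n_g}}$. For the partial derivative in $\by$, I would apply Taylor's theorem pointwise in $y$ to obtain, for a.a.\ $t \in I$,
\[
\bg\bigl(t, \by(t) + \delta\by(t)\bigr) - \bg\bigl(t, \by(t)\bigr) - \bg_y\bigl(t, \by(t)\bigr) \delta\by(t) = \int_0^1 \bigl[\bg_y\bigl(t, \by(t) + \tau \delta\by(t)\bigr) - \bg_y\bigl(t, \by(t)\bigr)\bigr] \delta\by(t) \, d\tau,
\]
whose norm is controlled pointwise by $\frac{1}{2} \|\bg_{yy}\|_{L^\infty}\|\delta\by(t)\|^2$, giving an $L^\infty$ remainder of order $O(\|\delta\by\|_{L^\infty}^2)$. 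Finiteness of $\|\bg_{yy}\|_{L^\infty}$ is precisely what the $\cG^2$-norm \eqref{eq:Sensitivity:caratheodory:g-space-norm} guarantees. A combined-increment estimate, using the same Taylor identity together with the $\cG^1$-Lipschitz bound on $\delta\bg$, produces joint Fr\'echet differentiability of $\BG$ with the expected formula.

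Continuity of $\BG'$ at $(\by, \bg)$ requires $\bg_{n,y}(\cdot, \by_n(\cdot)) \to \bg_y(\cdot, \by(\cdot))$ in $L^\infty$ when $(\by_n, \bg_n) \to (\by, \bg)$. Splitting via the triangle inequality, the $\bg_n \to \bg$ half is controlled by the $\cG^1$-component of the $\cG^2$-norm, while the $\by_n \to \by$ half uses Lipschitz continuity of $\bg_y$ in $y$ with constant $\|\bg_{yy}\|_{L^\infty}$. Chain-ruling $\widetilde{\BPhi} \circ (\mathrm{id}, \BG)$ then reproduces \eqref{eq:Sensitivity:nemytskii:Frechet_deriv} and its continuity. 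The main obstacle is that $\bg$ enters through the composition $\bg(t, \by(t))$ as a full function of $(t, y)$ rather than a parameter, so both differentiability and the continuity of $\BG'$ genuinely require uniform control of $\bg_y$ \emph{and} $\bg_{yy}$ simultaneously; this is why the weaker $\cG^1$ regularity sufficient for the well-posedness results of \cref{sec:sensitivity-problem-setting} cannot be used here, and why the $\cG^2$-norm \eqref{eq:Sensitivity:caratheodory:g-space-norm} is the natural domain of $\BPhi$ for differentiability purposes.
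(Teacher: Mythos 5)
Your proposal is correct, and it is a genuinely constructive route where the paper itself simply defers: its entire proof is the single line ``This result is a slight but straightforward extension of \cite[Thm.~2.5]{JRCangelosi_MHeinkenschloss_2024a}.'' You, in contrast, build a self-contained argument by factoring $\BPhi = \widetilde{\BPhi}\circ(\mathrm{id},\BG)$ and proving each factor is $C^1$: the outer operator $\widetilde{\BPhi}$ via the classical $L^\infty$ Nemytskii calculus under (i)--(ii), and the inner operator $\BG$ directly from the $\cG^2$ structure via the pointwise Taylor identity with remainder controlled by $\|\bg_{yy}\|_{L^\infty}$, plus the cross-term estimate $\|\delta\bg(\cdot,\by+\delta\by)-\delta\bg(\cdot,\by)\|_{L^\infty}\le\|\delta\bg\|_{(\cG^1(I))^{n_g}}\|\delta\by\|_{(L^\infty(I))^{n_y}}$ for the joint increment. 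Your continuity argument for $\BG'$ correctly splits into a $\|\bg_n-\bg\|_{(\cG^1(I))^{n_g}}$ contribution and a $\|\bg_{yy}\|_{L^\infty}\|\by_n-\by\|_{(L^\infty(I))^{n_y}}$ contribution; you should also note (as is implicit) the analogous estimate showing $\BG_\bg(\by_n,\cdot)\to\BG_\bg(\by,\cdot)$ in operator norm, which again uses the $\cG^1$ part of the $\cG^2$-norm. The chain rule then reproduces \eqref{eq:Sensitivity:nemytskii:Frechet_deriv} and its continuity.

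One small calibration of your closing commentary: you assert that ``differentiability \emph{and} the continuity of $\BG'$ genuinely require uniform control of $\bg_y$ and $\bg_{yy}$ simultaneously,'' but \cref{rk:sensitivity:cts-embed} explicitly states that $\big(\cG^1(I)\big)^{n_g}$ already suffices for Fr\'echet differentiability of $\BPhi$ at certain points, with $\cG^2$ needed only for \emph{continuous} differentiability. Your proof does not rely on the stronger claim, so this is a remark-level imprecision rather than a gap, but it is worth aligning with the paper's own delineation of what $\cG^1$ versus $\cG^2$ buys.
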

\begin{proof}
      This result is a slight but straightforward extension of  \cite[Thm.~2.5]{JRCangelosi_MHeinkenschloss_2024a}.
\end{proof}

\begin{remark} \label{rk:sensitivity:cts-embed}
     In \cref{thm:Sensitivity:nemytskii:nemytskii-derivative}, the space
     $\big(\cG^2(I)\big)^{n_g}$ is needed for continuous Fr\'echet differentiability of $\BPhi$;
     see \cite[Thm.~2.6]{JRCangelosi_MHeinkenschloss_2024a}.
      The space $\big(\cG^1(I)\big)^{n_g}$ for the component function $\bg$ is sufficient for Fr\'echet 
      differentiability of $\BPhi$ at certain points, but is insufficient for continuous differentiability;
      see \cite[Thm.~2.4]{JRCangelosi_MHeinkenschloss_2024a}.
	
     \cref{thm:Sensitivity:nemytskii:nemytskii-derivative} considers differentiability in $\LL^{n_y}$ rather than $\cY^\infty$.
     Because $\cY^\infty$ is continuously embedded in $\LL^{n_y}$, the result applies to 
     the Nemytskii operator  \eqref{eq:Sensitivity:nemytskii:nemytskii-FL} with $s\ge 2$.
\end{remark}

\subsection{Fr\'echet Differentiability of the OCP Solution}    \label{sec:sensitivity-OCP}
The sensitivity analysis for \eqref{eq:intro:OCP} is based on the first-order necessary optimality conditions for \eqref{eq:intro:OCP}.
See, e.g., \cite[Sec.~3.1]{MGerdts_2012a}, \cite[Sec.~5.2]{JJahn_2007a} for a derivation of these conditions for general optimal control problems. 
The following smoothness assumption is needed to obtain the optimality conditions and is adapted from \cite[Assumption~2.2.8]{MGerdts_2012a}.
\begin{assumption} \label{as:Sensitivity:OCP:smoothness}
      Let the functions $\varphi$, $l$, $\bff$, $\bg$ in \eqref{eq:intro:OCP} satisfy the following:
	\begin{itemize}
		\item[(i)] $\varphi$ is continuously differentiable.
		
		\item[(ii)] The mappings
		     $t \mapsto l \big(t, y, \bg(t, y) \big)$ and
		     $ t \mapsto \bff\big(t, y, \bg(t, y) \big)$
		are measurable for all $y \in \real^{n_y}$.
		
		\item[(iii)] The mappings
		     $y \mapsto l\big(t, y, \bg(t, y)\big)$ and
		     $ y \mapsto \bff\big(t, y, \bg(t, y)\big)$
		are uniformly continuously differentiable for $t \in I$.
		
		\item [(iv)] The first-order (total) derivatives
		     $(t, y) \mapsto \frac{d}{dy} l\big(t, y, \bg(t, y)\big)$ and
		     $(t, y) \mapsto \frac{d}{dy}\bff\big(t, y, \bg(t, y) \big)$
		are bounded in $I \times \real^{n_y}$.
	\end{itemize}
\end{assumption}

The first-order necessary optimality conditions for \eqref{eq:intro:OCP} are given in the following theorem.
\begin{theorem} \label{thm:Sensitivity:OCP:NOC}
  Let \cref{as:Sensitivity:OCP:smoothness} hold with $\bg = \overline{\bg}$.
  If  $\overline{\bx} \in  \big(W^{1, \infty}(I)\big)^{n_x}$, $\overline{\bu} \in  \big(L^\infty(I)\big)^{n_u}$, $\overline{\bp} \in \real^{n_p}$
  is a local minimum of \eqref{eq:intro:OCP} with $\bg = \overline{\bg}$, then
  there exists $\overline{\blambda} \in \big(W^{1,\infty}(I)\big)^{n_x}$ such that 
    \begin{align*}
    		\overline{\bx}'(t) &= \overline{\bff}[t], & t \in (t_0, t_f), \\ 
    	        \overline{\bx}(t_0) &= x_0, \\ 
    		\overline{\blambda}'(t) &= - \big(\overline{\bff}_x[t] + \overline{\bff}_g[t] \overline{\bg}_x[t] \big)^T \overline{\blambda}(t) - (\nabla_x \overline{l}[t] + \overline{\bg}_x[t]^T \nabla_g \overline{l}[t]), & t \in (t_0, t_f), \\  
    		\overline{\blambda}(t_f) &= \nabla_x \overline{\varphi}[t_f],\\ 
    		0 &=\big(\overline{\bff}_u[t] + \overline{\bff}_g[t] \overline{\bg}_u[t] \big)^T \overline{\blambda}(t) + (\nabla_u \overline{l}[t] + \overline{\bg}_u[t]^T \nabla_g \overline{l}[t]), & t \in (t_0, t_f), \\  
    		0 &= \nabla_p \overline{\varphi}[t_f] + \int_{t_0}^{t_f} \big(\overline{\bff}_p[t] + \overline{\bff}_g[t] \overline{\bg}_p[t] \big)^T \overline{\blambda}(t) \nonumber \\ &\quad + (\nabla_p \overline{l}[t] + \overline{\bg}_p[t]^T \nabla_g \overline{l}[t]) \, dt,
    \end{align*}
    using the shorthand
    \begin{equation} \label{eq:Sensitivity:OCP:shorthand-1}
    \begin{aligned}
         \overline{\bff}[\cdot] &:= \bff\Big( \cdot, \overline{\bx}(\cdot), \overline{\bu}(\cdot), \overline{\bp}, \, \overline{\bg} \big( \cdot, \overline{\bx}(\cdot), \overline{\bu}(\cdot), \overline{\bp} \big) \Big), & \qquad
          \overline{\bg}[\cdot] &:= \overline{\bg} \big( \cdot, \overline{\bx}(\cdot), \overline{\bu}(\cdot), \overline{\bp} \big), \\
    	\overline{l}[\cdot] &:=  l\Big( \cdot, \overline{\bx}(\cdot), \overline{\bu}(\cdot), \overline{\bp}, \, \overline{\bg} \big( \cdot, \overline{\bx}(\cdot), \overline{\bu}(\cdot), \overline{\bp} \big) \Big), & \qquad
    	\overline{\varphi}[t_f] &:= \varphi( \overline{\bx}(t_f), \bp )
    \end{aligned}
    \end{equation}
    and corresponding shorthand for partial derivatives.
\end{theorem}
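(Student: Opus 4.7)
My approach is to formulate \eqref{eq:intro:OCP} with $\bg = \overline{\bg}$ as an equality-constrained minimization over the Banach space $\cY^\infty$ and invoke a Lagrange multiplier theorem in Banach spaces, e.g.\ \cite[Sec.~3.1]{MGerdts_2012a} or \cite[Sec.~5.2]{JJahn_2007a}. Writing $\by = (\bx,\bu,\bp)$, define the objective $J(\by) := \varphi(\bx(t_f), \bp) + \int_{t_0}^{t_f} \BL(\by, \overline{\bg})(t) \, dt$ and the equality-constraint operator $\BE(\by) := \big( \bx'(\cdot) - \BF(\by, \overline{\bg}),\, \bx(t_0) - x_0 \big) \in \LL^{n_x} \times \real^{n_x}$ in terms of the Nemytskii operators from \eqref{eq:Sensitivity:nemytskii:nemytskii-FL}. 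Under \cref{as:Sensitivity:OCP:smoothness}, the uniform continuous differentiability in $y$ of the already-composed integrands $l(t, y, \overline{\bg}(t, y))$ and $\bff(t, y, \overline{\bg}(t,y))$ (conditions (ii)--(iv)) together with a direct analogue of \cref{thm:Sensitivity:nemytskii:nemytskii-derivative} specialized to a fixed $\overline{\bg}$ yields continuous Fr\'echet differentiability of $J$ and $\BE$ on $\cY^\infty$. The Mayer term inherits continuous differentiability from \cref{as:Sensitivity:OCP:smoothness}~(i) via continuity of the terminal trace $\bx \mapsto \bx(t_f)$ on $\WW^{n_x}$.

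Next I verify the constraint qualification, namely surjectivity of $\BE'(\overline{\by})$. Fixing $(\bq, r) \in \LL^{n_x} \times \real^{n_x}$ and setting $\delta\bu = 0$, $\delta\bp = 0$, the linearized state equation reads $\delta\bx'(t) = \BA(t)\delta\bx(t) + \bq(t)$ with $\delta\bx(t_0) = r$, where $\BA(t) := \overline{\bff}_x[t] + \overline{\bff}_g[t]\,\overline{\bg}_x[t]$ belongs to $L^\infty(I)^{n_x \times n_x}$ by \cref{as:Sensitivity:OCP:smoothness}~(iv). Standard linear ODE theory (a corollary of \cref{thm:Sensitivity:caratheodory:caratheodory}) yields a unique solution $\delta\bx \in \WW^{n_x}$, establishing surjectivity. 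The Lagrange multiplier theorem then produces a multiplier $(\overline{\blambda}, \overline{\mu}) \in (\LL^{n_x})^\ast \times \real^{n_x}$ for which the Lagrangian $\mathcal{L}(\by) = J(\by) + \langle \overline{\blambda}, \bx'(\cdot) - \BF(\by, \overline{\bg})\rangle + \overline{\mu}^T(\bx(t_0) - x_0)$ is stationary at $\overline{\by}$.

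The main obstacle is that $(L^\infty)^\ast$ is strictly larger than $L^1$, so the abstract multiplier need not a priori be represented by a function. I overcome this by exploiting the structure of the $\bx$-stationarity condition. Testing $\mathcal{L}_\bx(\overline{\by})\delta\bx = 0$ against variations $\delta\bx \in \WW^{n_x}$ with $\delta\bx(t_0) = 0$ and integrating by parts the term coming from $\langle \overline{\blambda}, \delta\bx' \rangle$, while expanding $\BF_\bx$ via the chain-rule formula of \cref{thm:Sensitivity:nemytskii:nemytskii-derivative}, forces $\overline{\blambda}$ to be the unique solution of the terminal-value problem
\[
\overline{\blambda}'(t) = -\big(\overline{\bff}_x[t] + \overline{\bff}_g[t]\,\overline{\bg}_x[t]\big)^T \overline{\blambda}(t) - \big(\nabla_x \overline{l}[t] + \overline{\bg}_x[t]^T \nabla_g \overline{l}[t]\big), \qquad \overline{\blambda}(t_f) = \nabla_x \overline{\varphi}[t_f],
\]
whose coefficients and source lie in $L^\infty(I)$; the transversality $\overline{\blambda}(t_f) = \nabla_x \overline{\varphi}[t_f]$ comes from the unabsorbed boundary term once $\delta\bx(t_f)$ is allowed to vary freely. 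Backward-in-time application of \cref{thm:Sensitivity:caratheodory:caratheodory} yields a unique $\overline{\blambda} \in \WW^{n_x}$, which serves as the desired multiplier. Finally, stationarity of $\mathcal{L}$ with respect to $\bu \in \LL^{n_u}$ and $\bp \in \real^{n_p}$, combined with the chain-rule expansions of $\BF_\bu, \BL_\bu, \BF_\bp, \BL_\bp$ from \cref{thm:Sensitivity:nemytskii:nemytskii-derivative} and the fundamental lemma of the calculus of variations applied to the bounded integrands, yields the pointwise condition in $\bu$ and the integral condition in $\bp$ stated in the theorem.
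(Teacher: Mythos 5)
Your proposal re-derives the necessary conditions from first principles by applying the abstract Lagrange multiplier theorem, whereas the paper's proof is a one-line citation to Gerdts, Thm.~3.1.11 together with the remark that the surjectivity constraint qualification of Gerdts, Cor.~2.3.34 holds automatically for \eqref{eq:intro:OCP}. Your verification of surjectivity of the linearized constraint --- take $\delta\bu = 0$, $\delta\bp = 0$ and solve the linearized state equation, which has $L^\infty$ coefficients by \cref{as:Sensitivity:OCP:smoothness}~(iv), so Carath\'eodory theory applies --- is precisely the observation behind the paper's cited constraint qualification, so at that level the two routes coincide; you are in effect unpacking the citation.

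The genuine gap is in your treatment of the multiplier's regularity. You rightly flag that the Lagrange multiplier $\overline{\blambda}$ produced by the abstract theorem lives a priori in $\big(\LL^{n_x}\big)^{\ast}$, which strictly contains $L^1(I)$, but you then propose to "integrate by parts the term coming from $\langle \overline{\blambda}, \delta\bx' \rangle$." This step is not available: a general element of $\big(L^\infty(I)\big)^{\ast}$ has no weak derivative, and writing $\langle\overline{\blambda}, \delta\bx'\rangle = \overline{\blambda}(t_f)^T\delta\bx(t_f) - \int_I \overline{\blambda}'(t)^T\delta\bx(t)\,dt$ presupposes that $\overline{\blambda}$ is already represented by a function in $\big(W^{1,1}(I)\big)^{n_x}$ --- exactly what is to be shown. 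The standard repair, which your sketch omits, is to \emph{evaluate} the abstract functional rather than differentiate it: for an arbitrary $\bq \in \LL^{n_x}$, choose the test variation $\delta\bx_\bq$ solving $\delta\bx_\bq' = \overline{\BA}[\cdot]\,\delta\bx_\bq + \bq$, $\delta\bx_\bq(t_0) = 0$, with $\delta\bu = 0$, $\delta\bp = 0$, so that stationarity of the Lagrangian gives $\langle\overline{\blambda}, \bq\rangle$ explicitly in terms of the Fr\'echet derivative of the objective applied to $\delta\bx_\bq$. Expressing $\delta\bx_\bq$ by variation of constants and exchanging the order of integration exhibits $\bq \mapsto \langle\overline{\blambda}, \bq\rangle$ as an integral functional against a kernel in $\WW^{n_x}$ that solves your adjoint terminal-value problem; only after this identification does integration by parts become legitimate, and only then does the transversality condition fall out by letting $\delta\bx(t_f)$ vary. (Your invocation of the "fundamental lemma" for the $\bu$-condition is really just $L^1$--$L^\infty$ separation once $\overline{\blambda}$ is known to be a function; that part is fine.)
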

\begin{proof}
	This result follows from \cite[Thm.~3.1.11]{MGerdts_2012a}.
	 Note that the surjectivity constraint qualification as described in \cite[Cor.~2.3.34]{MGerdts_2012a} is always satisfied for \eqref{eq:intro:OCP}.
\end{proof}

The optimality conditions of \cref{thm:Sensitivity:OCP:NOC} are considered as an operator equation to which the
Implicit Function \cref{thm:Sensitivity:intro:IFT} is applied to obtain sensitivities.
We continue to use the shorthand notation \eqref{eq:Sensitivity:OCP:shorthand-1}.
If we define the Banach spaces
\begin{equation} \label{eq:Sensitivity:OCP:function-spaces}
\begin{aligned}
	\cZ^\infty &:= \WW^{n_x} \times \LL^{n_u} \times \real^{n_p} \times \WW^{n_x}, \\
	\cV^\infty &:= \LL^{n_x} \times \real^{n_x} \times \LL^{n_x} \times \real^{n_x} \times \LL^{n_u} \times \real^{n_p}
\end{aligned}
\end{equation}
and the operator
\begin{subequations}     \label{eq:Sensitivity:OCP:KKT-operator}
\begin{equation}
		\bpsi : \cZ^\infty \times \big(\cG^3(I)\big)^{n_g} \rightarrow \cV^\infty
\end{equation}  
with
\begin{align}
    	& \bpsi(\overline{\bx}, \overline{\bu}, \overline{\bp}, \overline{\blambda}; \overline{\bg})  \nonumber \\
	&:= \begin{pmatrix} 
    		\overline{\bff}[\cdot] - \overline{\bx}'(\cdot) \\[0.5ex] 
    		x_0 - \overline{\bx}(t_0)  \\[0.5ex] 
    		\overline{\blambda}'(\cdot) + \big(\overline{\bff}_x[\cdot] 
		+ \overline{\bff}_g[\cdot] \overline{\bg}_x[\cdot] \big)^T \overline{\blambda}(\cdot) 
		+ \big(\nabla_x \overline{l}[\cdot] + \overline{\bg}_x[\cdot]^T \nabla_g \overline{l}[\cdot]\big)  \\[0.5ex] 
    		\nabla_x \overline{\varphi}[t_f] - \overline{\blambda}(t_f)  \\[0.5ex] 
    		\big(\overline{\bff}_u[\cdot] + \overline{\bff}_g[\cdot] \overline{\bg}_u[\cdot] \big)^T \overline{\blambda}(\cdot) 
		+ \big(\nabla_u \overline{l}[\cdot] + \overline{\bg}_u[\cdot]^T \nabla_g \overline{l}[\cdot]\big)  \\[0.5ex] 
    		 \nabla_p \overline{\varphi}[t_f] + \int_{t_0}^{t_f} \Big(\big(\overline{\bff}_p[t] 
		  + \overline{\bff}_g[t] \overline{\bg}_p[t] \big)^T \overline{\blambda}(t) + \big(\nabla_p \overline{l}[t] 
		  + \overline{\bg}_p[t]^T \nabla_g \overline{l}[t]\big) \Big) \, dt
		  \end{pmatrix},
    \end{align}
 \end{subequations}
the operator equation associated with the first-order necessary optimality conditions of \cref{thm:Sensitivity:OCP:NOC} 
is given by
\begin{equation}     \label{eq:Sensitivity:OCP:KKT-operator-eq}
    	\bpsi(\overline{\bx}, \overline{\bu}, \overline{\bp}, \overline{\blambda}; \overline{\bg})  = 0.
\end{equation}

To apply the  Implicit Function \cref{thm:Sensitivity:intro:IFT}, we first need to establish 
 the continuous Fr\'echet differentiability of $\bpsi$ in \eqref{eq:Sensitivity:OCP:KKT-operator}.
 This can be done by applying \cref{thm:Sensitivity:nemytskii:nemytskii-derivative}.
 However, because \eqref{eq:Sensitivity:OCP:KKT-operator} also involves the deriatives of
$\varphi$, $l$, and $\bff$ we need additional assumptions on $\varphi$, $l$, $\bff$.
Moreover, because \eqref{eq:Sensitivity:OCP:KKT-operator} also involves the deriatives of $\bg$,
the space $\big(\cG^3(I)\big)^{n_g}$ is needed for $\bg$.

\begin{assumption} \label{as:Sensitivity:OCP:smoothness-strong}
	 In addition to the properties listed in \cref{as:Sensitivity:OCP:smoothness}, let the following properties hold
	  for the function $\varphi$, $l$, and $\bff$  in \eqref{eq:intro:OCP}:
	\begin{itemize}
		\item[(i)] $\varphi$ is twice continuously differentiable.
		
		\item[(ii)] There exists $K$ such that for all $s \in \{ 0, 1, 2 \}$,
		\[
			\left\| \frac{\partial^s}{\partial (x, u, p, g)^s} \bff(t, 0, 0, 0, 0) \right\| \leq K, \qquad \aall t \in I.
		\]

		\item[(iii)] For all $R > 0$ there exists $L(R)$ such that for all $s \in \{ 0, 1, 2 \}$ it holds
		\begin{align*}
			&\left\| \frac{\partial^s}{\partial (x, u, p, g)^s} \bff(t, x_1, u_1, p_1, g_1) 
			         - \frac{\partial^s}{\partial (x, u, p, g)^s} \bff(t, x_2, u_2, p_2, g_2) \right\| \\ 
		        &\quad\leq L(R) \| (x_1, u_1, p_1, g_1) - (x_2, u_2, p_2, g_2) \|, \\ 
		        & \qquad \qquad \qquad \qquad \qquad \aall t \in I \mbox{ and all } (x_1, u_1, p_1, g_1), (x_2, u_2, p_2, g_2) \in \cB_R(0).
		\end{align*}
		
		\item[(iv)] Properties (ii) and (iii) also hold for $l$ (\emph{mutatis mutandis}).
	\end{itemize}
\end{assumption}
Properties (ii)-(iv) allow application of \cref{thm:Sensitivity:nemytskii:nemytskii-derivative} to establish continuous Fr\'echet differentiability of the Nemytskii operator \eqref{eq:Sensitivity:nemytskii:nemytskii-operator} for $\bphi = \bff$, $l$, and their first partial derivatives, which is required for condition (i) of the Implicit Function \cref{thm:Sensitivity:intro:IFT}, proven next.

\begin{lemma} \label{lem:Sensitivity:OCP:cts-frechet}
        Let $(\overline{\bx}, \overline{\bu}, \overline{\bp}, \overline{\blambda}) \in \cZ^\infty$ be given, let $\overline{\bg} \in \big(\cG^3(I)\big)^{n_g}$, 
	and define the Hamiltonian
	\[
		\overline{\BH}[\cdot] := \overline{l}[\cdot] + \overline{\blambda}(\cdot)^T \overline{\bff}[\cdot].
	\]

	If $\bphi = \bff, l$ satisfy the assumptions of \cref{thm:Sensitivity:nemytskii:nemytskii-derivative} and 
	if \cref{as:Sensitivity:OCP:smoothness-strong} holds,
	then $\bpsi$ in \eqref{eq:Sensitivity:OCP:KKT-operator} is continuously Fr\'echet differentiable in all arguments at the point 
	$(\overline{\bx}, \overline{\bu}, \overline{\bp}, \overline{\blambda}; \overline{\bg})$ with
	 partial Fr\'echet derivatives
\begin{subequations} \label{eq:Sensitivity:OCP:partial-frechet}
\begin{equation} \label{eq:Sensitivity:OCP:partial-frechet-a}
\begin{aligned}
	&\bpsi_{(\bx, \bu, \bp, {\tiny \blambda})}(\overline{\bx}, \overline{\bu}, \overline{\bp}, \overline{\blambda}; \overline{\bg})(\delta \bx, \delta \bu, \delta \bp, \delta \blambda) \\
	&\quad= 
	\begin{pmatrix} 
		\overline{\BA}[\cdot] \delta \bx(\cdot) + \overline{\BB}[\cdot] \delta \bu(\cdot) + \overline{\BC}[\cdot] \delta \bp - \delta \bx'(\cdot)  \\[0.75ex]
		- \delta \bx(t_0) \\[0.75ex]
		\delta \blambda'(\cdot) + \overline{\BH}_{xx}[\cdot] \delta \bx(\cdot) + \overline{\BH}_{xu}[\cdot] \delta \bu(\cdot) 
		  + \overline{\BH}_{xp}[\cdot] \delta \bp + \overline{\BA}[\cdot]^T \delta \blambda(\cdot)  \\[0.75ex]
		 \nabla_{xx}^2 \overline{\varphi}[t_f] \delta \bx(t_f) + \nabla_{xp}^2 \overline{\varphi}[t_f] \delta \bp - \delta \blambda(t_f)  \\[0.75ex]
		\overline{\BH}_{ux}[\cdot] \delta \bx(\cdot) + \overline{\BH}_{uu}[\cdot] \delta \bu(\cdot) 
		+ \overline{\BH}_{up}[\cdot] \delta \bp + \overline{\BB}[\cdot]^T \delta \blambda(\cdot)  \\[1.0ex]
		 \nabla_{px}^2 \overline{\varphi}[t_f]\delta \bx(t_f) +  \nabla_{pp}^2 \overline{\varphi}[t_f] \delta \bp \\ + \int_{t_0}^{t_f} \overline{\BH}_{px}[t] \delta \bx(t) + \overline{\BH}_{pu}[t] \delta \bu(t) + \overline{\BH}_{pp}[t] \delta \bp + \overline{\BC}[t]^T \delta \blambda(t) \, dt
	\end{pmatrix}
\end{aligned}
\end{equation}
and
\begin{equation} \label{eq:Sensitivity:OCP:partial-frechet-b}
\begin{aligned}
	&\bpsi_{\bg}(\overline{\bx}, \overline{\bu}, \overline{\bp}, \overline{\blambda}; \overline{\bg}) \delta \bg = 
	\begin{pmatrix} 
		 \overline{\bff}_g[\cdot] \delta \bg[\cdot]  \\[0.5ex]
		0  \\[0.5ex]
		 \overline{\BH}_{xg}[\cdot] \delta \bg[\cdot] + \delta \bg_x[\cdot]^T \overline{\bd}[\cdot]    \\[0.5ex]
		0  \\[0.5ex]
		 \overline{\BH}_{ug}[\cdot] \delta \bg[\cdot] + \delta \bg_u[\cdot]^T \overline{\bd}[\cdot]   \\[0.75ex]
		\int_{t_0}^{t_f}   \overline{\BH}_{pg}[t] \delta \bg[t] + \delta \bg_p[t]^T \overline{\bd}[t] \, dt
	\end{pmatrix},
\end{aligned}
\end{equation}
\end{subequations}
where
\begin{subequations} \label{eq:Sensitivity:OCP:LQOCP_components}
\begin{equation} \label{eq:Sensitivity:OCP:LQOCP_components1}
\begin{aligned}
	\overline{\BH}_{xx}[\cdot] 
	&= \nabla_{xx}^2 \overline{\BH}[\cdot] + \nabla_{xg}^2 \overline{\BH}[\cdot] \overline{\bg}_x[\cdot] + (\nabla_{xg}^2 \overline{\BH}[\cdot] \overline{\bg}_x[\cdot])^T + \overline{\bg}_x[\cdot]^T \nabla_{gg}^2 \overline{\BH}[\cdot] \overline{\bg}_x[\cdot], \\
	\overline{\BH}_{xu}[\cdot] 
	&= \nabla_{xu}^2 \overline{\BH}[\cdot] + \nabla_{xg}^2 \overline{\BH}[\cdot] \overline{\bg}_u[\cdot] + (\nabla_{xg}^2 \overline{\BH}[\cdot] \overline{\bg}_u[\cdot])^T + \overline{\bg}_x[\cdot]^T \nabla_{gg}^2 \overline{\BH}[\cdot] \overline{\bg}_u[\cdot], \\
	\overline{\BH}_{xp}[\cdot]
	&= \nabla_{xp}^2 \overline{\BH}[\cdot] + \nabla_{xg}^2 \overline{\BH}[\cdot] \overline{\bg}_p[\cdot] + (\nabla_{xg}^2 \overline{\BH}[\cdot] \overline{\bg}_p[\cdot])^T + \overline{\bg}_x[\cdot]^T \nabla_{gg}^2 \overline{\BH}[\cdot] \overline{\bg}_p[\cdot], \\
	\overline{\BH}_{uu}[\cdot] 
	&= \nabla_{uu}^2 \overline{\BH}[\cdot] + \nabla_{ug}^2 \overline{\BH}[\cdot] \overline{\bg}_u[\cdot] + (\nabla_{ug}^2 \overline{\BH}[\cdot] \overline{\bg}_u[\cdot])^T + \overline{\bg}_u[\cdot]^T \nabla_{gg}^2 \overline{\BH}[\cdot] \overline{\bg}_u[\cdot], \\
	\overline{\BH}_{up}[\cdot] 
	&= \nabla_{up}^2 \overline{\BH}[\cdot] + \nabla_{ug}^2 \overline{\BH}[\cdot] \overline{\bg}_p[\cdot] + (\nabla_{ug}^2 \overline{\BH}[\cdot] \overline{\bg}_p[\cdot])^T + \overline{\bg}_u[\cdot]^T \nabla_{gg}^2 \overline{\BH}[\cdot] \overline{\bg}_p[\cdot], \\
	\overline{\BH}_{pp}[\cdot]
	&= \nabla_{pp}^2 \overline{\BH}[\cdot] + \nabla_{pg}^2 \overline{\BH}[\cdot] \overline{\bg}_p[\cdot] + (\nabla_{pg}^2 \overline{\BH}[\cdot] \overline{\bg}_p[\cdot])^T + \overline{\bg}_p[\cdot]^T \nabla_{gg}^2 \overline{\BH}[\cdot] \overline{\bg}_p[\cdot], \\
	\overline{\BH}_{xg}[\cdot]
	&= \nabla_{xg}^2 \overline{\BH}[\cdot] + \overline{\bg}_x[\cdot]^T \nabla_{gg}^2 \overline{\BH}[\cdot], \\
	\overline{\BH}_{ug}[\cdot]
	&= \nabla_{ug}^2 \overline{\BH}[\cdot] + \overline{\bg}_u[\cdot]^T \nabla_{gg}^2 \overline{\BH}[\cdot], \\
	\overline{\BH}_{pg}[\cdot]
	&= \nabla_{pg}^2 \overline{\BH}[\cdot] + \overline{\bg}_p[\cdot]^T \nabla_{gg}^2 \overline{\BH}[\cdot]
\end{aligned}
\end{equation}
and
\begin{equation} \label{eq:Sensitivity:OCP:LQOCP_components2}
\begin{aligned}
    \overline{\BA}[\cdot] &= \overline{\bff}_x[\cdot] + \overline{\bff}_g[\cdot] \overline{\bg}_x[\cdot], \quad
	\overline{\BB}[\cdot] = \overline{\bff}_u[\cdot] + \overline{\bff}_g[\cdot] \overline{\bg}_u[\cdot], \quad
	\overline{\BC}[\cdot] = \overline{\bff}_p[\cdot] + \overline{\bff}_g[\cdot] \overline{\bg}_p[\cdot], \\
	\overline{\bd}[\cdot] &= \overline{\bff}_g[\cdot]^T \overline{\blambda}(\cdot).
\end{aligned}
\end{equation}
\end{subequations}
\end{lemma}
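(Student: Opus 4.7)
The plan is to express $\bpsi$ in \eqref{eq:Sensitivity:OCP:KKT-operator} as a sum of products of continuously Fr\'echet differentiable building blocks, then invoke \cref{thm:Sensitivity:nemytskii:nemytskii-derivative} together with the product and chain rules. The building blocks split into three categories. The first consists of the bounded linear operators already present in $\bpsi$: the time-derivatives $\delta \bx \mapsto \delta \bx'$ and $\delta \blambda \mapsto \delta \blambda'$ from $\WW^{n_x}$ to $\LL^{n_x}$, the point evaluations $\delta \bx \mapsto \delta \bx(t_0), \delta \bx(t_f)$ and $\delta \blambda \mapsto \delta \blambda(t_f)$ from $\WW^{n_x}$ to $\real^{n_x}$, and time integration from $\LL^{n_p}$ to $\real^{n_p}$. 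The second consists of Nemytskii operators $\BPhi(\by, \bg) = \bphi(\cdot, \by(\cdot), \bg(\cdot, \by(\cdot)))$ with $\bphi$ ranging over $\bff, l, \nabla_x l, \nabla_u l, \nabla_p l, \nabla_g l, \bff_x, \bff_u, \bff_p, \bff_g$. By \cref{as:Sensitivity:OCP:smoothness-strong}, each such $\bphi$ satisfies the boundedness and local Lipschitz hypotheses of \cref{thm:Sensitivity:nemytskii:nemytskii-derivative}, so each associated Nemytskii operator is continuously Fr\'echet differentiable from $\LL^{n_y} \times \big(\cG^2(I)\big)^{n_g}$ into its codomain with derivative given by \eqref{eq:Sensitivity:nemytskii:Frechet_deriv}.

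The third category consists of the $\bg$-derivative terms $\overline{\bg}_x[\cdot], \overline{\bg}_u[\cdot], \overline{\bg}_p[\cdot]$, which I handle by noting that the linear extraction $\bg \mapsto \bg_y$ maps $\big(\cG^3(I)\big)^{n_g}$ boundedly into $\big(\cG^2(I)\big)^{n_g \times n_y}$. Applying \cref{thm:Sensitivity:nemytskii:nemytskii-derivative} with the elementary outer function $\bphi(t, y, g) = g$ to the composite $(\by, \bg) \mapsto \bg_y(\cdot, \by(\cdot))$ then yields continuous Fr\'echet differentiability from $\LL^{n_y} \times \big(\cG^3(I)\big)^{n_g}$ to $\LL^{n_g \times n_y}$, with derivative $(\delta \by, \delta \bg) \mapsto \bg_{yy}(\cdot, \by(\cdot)) \delta \by(\cdot) + \delta \bg_y(\cdot, \by(\cdot))$. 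This is precisely where the $\big(\cG^3(I)\big)^{n_g}$ regularity is needed, as flagged in \cref{rk:sensitivity:cts-embed}. The multiplier $\overline{\blambda}$ enters $\bpsi$ only through bounded bilinear pointwise products against Nemytskii outputs, and the terminal terms $\nabla_x \overline{\varphi}[t_f]$ and $\nabla_p \overline{\varphi}[t_f]$ are $C^1$ compositions of $\nabla \varphi$ with $\bx \mapsto \bx(t_f)$ since $\varphi \in C^2$ by \cref{as:Sensitivity:OCP:smoothness-strong}~(i). Combining all pieces via the product and chain rules, and using the continuous embedding $\cZ^\infty \hookrightarrow \LL^{n_y} \times \WW^{n_x}$, yields the continuous Fr\'echet differentiability of $\bpsi$.

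The explicit formulas \eqref{eq:Sensitivity:OCP:partial-frechet-a}--\eqref{eq:Sensitivity:OCP:partial-frechet-b} are then obtained by differentiating each component of $\bpsi$ using \eqref{eq:Sensitivity:nemytskii:Frechet_deriv} together with the product rule. For example, the $\bx$-derivative of $\overline{\bff}[\cdot]$ gives $\big(\overline{\bff}_x[\cdot] + \overline{\bff}_g[\cdot] \overline{\bg}_x[\cdot]\big) \delta \bx = \overline{\BA}[\cdot] \delta \bx$, and the analogous $\bu, \bp$ computations yield $\overline{\BB}[\cdot], \overline{\BC}[\cdot]$ in \eqref{eq:Sensitivity:OCP:LQOCP_components2}. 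For the costate-equation component, rewriting its nonhomogeneous part as $\nabla_x \overline{\BH}[\cdot] + \overline{\bg}_x[\cdot]^T \nabla_g \overline{\BH}[\cdot]$ with $\overline{\BH} = \overline{l} + \overline{\blambda}^T \overline{\bff}$ and differentiating in $\bx$ produces four families of contributions---a direct Hessian $\nabla_{xx}^2 \overline{\BH}$, two symmetric cross-terms $\nabla_{xg}^2 \overline{\BH} \, \overline{\bg}_x$ and its transpose, and the double chain-rule term $\overline{\bg}_x^T \nabla_{gg}^2 \overline{\BH} \, \overline{\bg}_x$---which assemble into the expression for $\overline{\BH}_{xx}[\cdot]$ in \eqref{eq:Sensitivity:OCP:LQOCP_components1}, with analogous derivations for $\overline{\BH}_{xu}, \ldots, \overline{\BH}_{pg}$. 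The $\delta \bg$ and $\delta \bg_y$ pieces of \eqref{eq:Sensitivity:nemytskii:Frechet_deriv} applied to these same Nemytskii factors then give the $\bg$-derivative formula \eqref{eq:Sensitivity:OCP:partial-frechet-b}. The main obstacle is the bookkeeping required to organize the numerous second-order contributions into the compact Hamiltonian form of \eqref{eq:Sensitivity:OCP:LQOCP_components}, which is tedious but entirely mechanical once \cref{thm:Sensitivity:nemytskii:nemytskii-derivative} is in hand.
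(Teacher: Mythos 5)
Your proposal follows essentially the same route as the paper's proof: reduce each term of $\bpsi$ to a continuously Fr\'echet differentiable building block, invoke \cref{thm:Sensitivity:nemytskii:nemytskii-derivative} for the Nemytskii compositions involving $\bff$, $l$, and their first derivatives (with \cref{as:Sensitivity:OCP:smoothness-strong} supplying the hypotheses), handle $\overline{\bg}_x[\cdot]$, $\overline{\bg}_u[\cdot]$, $\overline{\bg}_p[\cdot]$ by composing the bounded linear extraction $\big(\cG^3(I)\big)^{n_g}\ni\bg\mapsto\bg_y\in\big(\cG^2(I)\big)^{n_g\times n_y}$ with \cref{thm:Sensitivity:nemytskii:nemytskii-derivative} applied to $\bphi(t,y,g)=g$, and then assemble with the product and chain rules. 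Your three-category organization (bounded linear pieces, Nemytskii pieces, $\bg$-derivative pieces) is a cleaner bookkeeping of what the paper does in its illustrative treatment of $\overline{\bff}[\cdot]$, $\overline{\bff}_x[\cdot]$, and $\overline{\bg}_x[\cdot]$, but the underlying ingredients are identical.

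One caution on the part of your write-up that goes beyond the paper. When you explicitly enumerate the contributions to $\overline{\BH}_{xx}[\cdot]$, you list four families, but the product rule applied to $\overline{\bg}_x[\cdot]^T\nabla_g\overline{\BH}[\cdot]$ (equivalently, to the pieces $\overline{\bg}_x^T\nabla_g\overline{l}$ and $(\overline{\bff}_g\overline{\bg}_x)^T\overline{\blambda}=\overline{\bg}_x^T\overline{\bd}$) also produces a fifth family coming from differentiating the factor $\overline{\bg}_x[\cdot]$ itself, namely the contraction $\sum_k\big(\nabla_g\overline{\BH}[\cdot]\big)_k\nabla^2_{xx}\overline{\bg}_k[\cdot]$. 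A one-dimensional sanity check ($n_x=n_g=1$, $l\equiv 0$, $\bff(t,x,g)=g$, $\bg(t,x)=g(x)$) gives $\overline{\BH}_{xx}[\cdot]=\overline{\blambda}(\cdot)\,g''\big(\overline{\bx}(\cdot)\big)$, which is exactly this fifth term while the four families you list all vanish. This term is also absent from the paper's displayed formula \eqref{eq:Sensitivity:OCP:LQOCP_components1}, so your derivation is consistent with the statement you are proving, but since you offer a more explicit derivation than the paper's (which simply asserts the form follows from \cref{thm:Sensitivity:nemytskii:nemytskii-derivative} and the chain/product rules), I would double-check whether the second-derivative-of-$\bg$ contractions, and similarly whether $\overline{\bd}[\cdot]$ versus $\nabla_g\overline{\BH}[\cdot]$ in the $\delta\bg_x$ coefficient of \eqref{eq:Sensitivity:OCP:partial-frechet-b}, should appear in the final formulas.
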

\begin{proof}
       Provided $\bpsi$ is continuously Fr\'echet differentiable, the form of \eqref{eq:Sensitivity:OCP:partial-frechet-a} and 
	\eqref{eq:Sensitivity:OCP:partial-frechet-b} follows from \cref{thm:Sensitivity:nemytskii:nemytskii-derivative} 
	and repeated application of the product rule and the chain rule. It remains to establish the continuous Fr\'echet differentiability of $\bpsi$. 
	Because products and sums preserve continuous Fr\'echet differentiability, it suffices to show the continuous Fr\'echet differentiability 
	of each term in the enumeration of $\bpsi$. 
	Since many terms are structurally similar, we do not provide an exhaustive proof for each term, which would be tedious.
	Instead, we only provide a proof for those terms for which continuous Fr\'echet differentiability is most difficult to establish.

	\cref{thm:Sensitivity:nemytskii:nemytskii-derivative} can be applied to establish continuous Fr\'echet differentiability 
	of several terms in \eqref{eq:Sensitivity:OCP:KKT-operator}. 
	For instance, consider the term $\overline{\bff}[\cdot]$
         appearing in the first component of $\bpsi$. Recall from \eqref{eq:Sensitivity:OCP:shorthand-1} and 
         \eqref{eq:Sensitivity:OCP:function-spaces} that
         \begin{equation*}
      	        \overline{\bff}[\cdot] = \bff\Big( \cdot, \overline{\by}(\cdot), \overline{\bg} \big( \cdot, \overline{\by}(\cdot) \big) \Big),
        \end{equation*}
         where $\overline{\by} = (\overline{\bx}, \overline{\bu}, \overline{\bp}) \in \cY^\infty 
         = \big( W^{1, \infty}(I) \big)^{n_x} \times \big( L^\infty(I) \big)^{n_u} \times \real^{n_p}$.
         Clearly $\overline{\bff}[\cdot]$ is continuously Fr\'echet differentiable by \cref{thm:Sensitivity:nemytskii:nemytskii-derivative}. 
         
         Now, consider the term 
         \[
                 \overline{\bff}_x[\cdot] = \bff_x\Big( \cdot, \overline{\by}(\cdot), \overline{\bg}\big(\cdot, \overline{\by}(\cdot) \big) \Big) 
         \] 
         appearing in the third component of $\bpsi$. 
         By flattening the matrix-valued output of $\bff_x$, one obtains a vector-valued function to which 
         \cref{thm:Sensitivity:nemytskii:nemytskii-derivative} may be applied to establish continuous 
         Fr\'echet differentiability of the matrix-valued function.
          By similar arguments, $\overline{\bff}_g[\cdot]$ is also continuously Fr\'echet differentiable.
          
          It is easily shown that the hypotheses of \cref{thm:Sensitivity:nemytskii:nemytskii-derivative} hold for
            \[
              	\bphi(t, y, g) := g.
           \]
          Thus, the Nemytskii operator
       \begin{equation} \label{eq:Sensitivity:OCP:nemytskii-g-x}
    	\LL^{n_x + n_u + n_p} \times \big(\cG^2(I)\big)^{n_g n_x} \ni (\by, \bg_x) 
	        \mapsto \BPhi(\by, \bg_x) := \bg_x\big(\cdot, \by(\cdot)\big) \in \LL^{n_g n_x}
        \end{equation}
        is continuously Fr\'echet differentiable.
        Since the map $\big(\cG^3(I)\big)^{n_g} \ni \bg \mapsto \bg_x \in \big(\cG^2(I)\big)^{n_g n_x}$
        is clearly bounded and linear, and since $\cY^\infty$ is continuously embedded  in 
        $\LL^{n_x + n_u + n_p}$, it follows from the differentiability of \eqref{eq:Sensitivity:OCP:nemytskii-g-x} 
        that $\overline{\bg}_x[\cdot]$ is continuously Fr\'echet differentiable.

        Similar arguments may be used to establish continuous Fr\'echet differentiability of the other terms in $\bpsi$. 
\end{proof}

\begin{remark} \label{rk:Sensitivity:OCP:regularity-of-terms}
	Note that all component functions in \eqref{eq:Sensitivity:OCP:LQOCP_components} are essentially bounded due to 
	conditions (ii)-(iv) of \cref{as:Sensitivity:OCP:smoothness-strong} and the definition of $\big(\cG^3(I)\big)^{n_g}$. 
\end{remark}

The continuous invertibility of 
$\bpsi_{(\bx, \bu, \bp, {\tiny \blambda})}(\overline{\bx}, \overline{\bu}, \overline{\bp}, \overline{\blambda}; \overline{\bg})$ required by
hypothesis (ii) of the Implicit Function \cref{thm:Sensitivity:intro:IFT}
is proven using a second-order sufficient optimality condition.
Second-order sufficient conditions are more naturally formulated in $L^2$ spaces where Hilbert space structures can be leveraged, 
but proving Fr\'echet differentiability requires the stronger topology of $L^\infty$ spaces; this is related to the so-called \emph{two-norm discrepancy} discussed in e.g., \cite{KMalanowski_1993a}, \cite{HMaurer_1981}, \cite{ECasas_FTroltzsch_2015a}. The following second-order sufficient optimality condition is assumed to deal with this discrepancy.
\begin{assumption} \label{as:Sensitivity:OCP:SSOC}
	\begin{itemize}
	\item[(i)] The matrix
	\[
		              \begin{pmatrix}  \nabla_{xx}^2 \overline{\varphi}[t_f] &  \nabla_{xp}^2 \overline{\varphi}[t_f] \\ 
		                                      \nabla_{px}^2 \overline{\varphi}[t_f] &  \nabla_{pp}^2 \overline{\varphi}[t_f] 
		           \end{pmatrix} 
	\]
	is symmetric positive semidefinite.		
	\item[(ii)] There exists $\epsilon > 0$ such that
	\begin{align*}
		&\int_{t_0}^{t_f} \begin{pmatrix} \delta \bx(t) \\ \delta \bu(t) \\ \delta \bp \end{pmatrix}^T 
		\begin{pmatrix} \overline{\BH}_{xx}[t] & \overline{\BH}_{xu}[t] & \overline{\BH}_{xp}[t] \\ \overline{\BH}_{ux}[t] & \overline{\BH}_{uu}[t] & \overline{\BH}_{up}[t] \\ \overline{\BH}_{px}[t] & \overline{\BH}_{pu}[t] & \overline{\BH}_{pp}[t] \end{pmatrix}
		\begin{pmatrix} \delta \bx(t) \\ \delta \bu(t) \\ \delta \bp \end{pmatrix} \, dt \\ 
		&\geq \epsilon \, \| (\delta \bx, \delta \bu, \delta \bp) \|_{( L^2(I) )^{n_x} \times ( L^2(I) )^{n_u} \times \real^{n_p}}^2
	\end{align*}
	for all $(\delta \bx, \delta \bu, \delta \bp) \in \big( W^{1, 2}(I) \big)^{n_x} \times \big( L^2(I) \big)^{n_u} \times \real^{n_p}$ satisfying
	\begin{align*}
		\delta \bx'(t) &= \overline{\BA}[t] \delta \bx(t) + \overline{\BB}[t] \delta \bu(t) + \overline{\BC}[t] \delta \bp, & \aall t \in I, \\
	\delta \bx(t_0) &= 0.
	\end{align*}	
	
	\item[(iii)] The inverse $\overline{\BH}_{uu}[t]^{-1}$ exists for almost all $t \in I$, and there exists $M > 0$ such that
	\[
		\big\| \overline{\BH}_{uu}[t]^{-1} \big\| \leq M, \qquad \aall t \in I.
	\]
	\end{itemize}
\end{assumption}

Condition (iii), which is adapted from \cite[$\widetilde{\rm SSC}$, p.~224]{KMalanowski_1997a}, ensures existence and uniqueness of solutions to a particular linear quadratic optimal control problem in the Hilbert space 
\[
	\cZ^2 := \big( W^{1, 2}(I) \big)^{n_x} \times \big( L^2(I) \big)^{n_u} \times \real^{n_p} \times \big( W^{1, 2}(I) \big)^{n_x}
\]
whose solution $(\delta \bx, \delta \bu, \delta \bp, \delta \blambda)$ gives the sensitivity of the solution to \eqref{eq:intro:OCP}, 
and condition (iii), which follows from a Legendre-Clebsch condition like e.g., \cite[Eq.~4.9, p.~217]{KMalanowski_1997a}, 
ensures that this unique solution also belongs to $\cZ^\infty$, which is the original function space setting.

The next result establishes hypothesis (ii) for the Implicit Function \cref{thm:Sensitivity:intro:IFT}.
\begin{lemma} \label{lem:Sensitivity:OCP:bijection}
	If the assumptions of \cref{lem:Sensitivity:OCP:cts-frechet} and \cref{as:Sensitivity:OCP:SSOC} hold, 
	then the partial Fr\'echet derivative 
	$\bpsi_{(\bx, \bu, \bp, {\tiny \blambda})}(\overline{\bx}, \overline{\bu}, \overline{\bp}, \overline{\blambda}; \overline{\bg})$ 
	in \eqref{eq:Sensitivity:OCP:partial-frechet-a} is a bijection from $\cZ^\infty \times \big(\cG^3(I)\big)^{n_g}$ to $\cV^\infty$.
\end{lemma}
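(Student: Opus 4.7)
The plan is to interpret the linear equation
\[ \bpsi_{(\bx, \bu, \bp, \blambda)}(\overline{\bx}, \overline{\bu}, \overline{\bp}, \overline{\blambda}; \overline{\bg})(\delta \bx, \delta \bu, \delta \bp, \delta \blambda) = \br \]
for an arbitrary right-hand side $\br = (\br_1, r_2, \br_3, r_4, \br_5, r_6) \in \cV^\infty$ as the first-order necessary optimality conditions of an inhomogeneous linear-quadratic optimal control problem (LQOCP) in the variables $(\delta \bx, \delta \bu, \delta \bp)$. Unique solvability of this LQOCP in the correct function spaces then simultaneously yields injectivity and surjectivity of the partial derivative.

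First I would set up the LQOCP by reading off \eqref{eq:Sensitivity:OCP:partial-frechet-a}: the first and second components correspond to the linearized state equation with right-hand side $\br_1$ and initial condition $-r_2$, while the remaining components form the adjoint equation, endpoint condition, and gradient conditions for a quadratic objective assembled from the Hessian blocks $\overline{\BH}_{\cdot \cdot}[t]$ and the endpoint Hessian of $\overline{\varphi}[t_f]$, together with inhomogeneous linear terms driven by $\br_3, r_4, \br_5, r_6$. By \cref{rk:Sensitivity:OCP:regularity-of-terms} all coefficient functions appearing here are essentially bounded.

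Next I would establish existence and uniqueness of a solution of this LQOCP in the Hilbert space $\cZ^2$. Parts (i)--(ii) of \cref{as:Sensitivity:OCP:SSOC} render the quadratic cost coercive on the affine subspace cut out by the inhomogeneous linearized dynamics, so a standard direct-method argument produces a unique minimizer $(\delta \bx, \delta \bu, \delta \bp) \in (W^{1,2}(I))^{n_x} \times (L^2(I))^{n_u} \times \real^{n_p}$ and a unique adjoint $\delta \blambda \in (W^{1,2}(I))^{n_x}$ satisfying the linearized KKT system in the $L^2$-sense, which is exactly $\bpsi_{(\bx, \bu, \bp, \blambda)}(\overline{\bx}, \overline{\bu}, \overline{\bp}, \overline{\blambda}; \overline{\bg})(\delta \bx, \delta \bu, \delta \bp, \delta \blambda) = \br$ read in the weaker topology.

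The final and main step is to upgrade this $\cZ^2$-solution to $\cZ^\infty$; this is where I expect the principal obstacle, since it is precisely the two-norm discrepancy flagged before \cref{as:Sensitivity:OCP:SSOC}. The key tool is part (iii) of \cref{as:Sensitivity:OCP:SSOC}, which allows me to solve the fifth KKT component pointwise for $\delta \bu$,
\[ \delta \bu(t) = \overline{\BH}_{uu}[t]^{-1}\Big(\br_5(t) - \overline{\BH}_{ux}[t]\,\delta \bx(t) - \overline{\BH}_{up}[t]\,\delta \bp - \overline{\BB}[t]^T \delta \blambda(t)\Big), \]
with the uniform bound $\|\overline{\BH}_{uu}[t]^{-1}\| \le M$. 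Substituting back into the linearized state and adjoint equations produces a coupled linear boundary-value problem in $(\delta \bx, \delta \blambda)$ with essentially bounded coefficients and $L^\infty$ forcing (contributed by $\br_1, \br_3, \br_5, r_2, r_4$). A Gronwall-type estimate then yields $\delta \bx, \delta \blambda \in (W^{1,\infty}(I))^{n_x}$, and the displayed formula gives $\delta \bu \in (L^\infty(I))^{n_u}$. Thus the unique $\cZ^2$-solution lies in $\cZ^\infty$, and uniqueness within $\cZ^\infty \subset \cZ^2$ is inherited from the previous step, so $\bpsi_{(\bx, \bu, \bp, \blambda)}(\overline{\bx}, \overline{\bu}, \overline{\bp}, \overline{\blambda}; \overline{\bg})$ is a bijection onto $\cV^\infty$.
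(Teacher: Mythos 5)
Your proposal takes essentially the same route as the paper: read the linearized KKT system as the optimality conditions of an inhomogeneous LQOCP, get a unique $\cZ^2$-solution from \cref{as:Sensitivity:OCP:SSOC}~(i)--(ii) via the direct method, then use \cref{as:Sensitivity:OCP:SSOC}~(iii) to solve the gradient condition pointwise for $\delta\bu$ and bootstrap $L^\infty$ regularity of all components. (The Gronwall invocation in the last step is unnecessary---with the $\cZ^2$-solution in hand, $\delta\bx$ and $\delta\blambda$ are already continuous hence bounded on $\overline I$, so their $\big(W^{1,\infty}(I)\big)^{n_x}$ regularity follows directly from the state and costate ODEs having $L^\infty$ coefficients and data.)
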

\begin{proof}
    Given a right-hand side $\bv := (\br, r_0, \bc_x, \sigma_f, \bc_u, \gamma) \in \cV^\infty$ the equation 
    \begin{equation}     \label{eq:Sensitivity:OCP:LQOCP0}
           \bpsi_{(\bx, \bu, \bp, {\tiny \blambda})}(\overline{\bx}, \overline{\bu}, \overline{\bp}, \overline{\blambda}; \overline{\bg})
             (\delta \bx, \delta \bu, \delta \bp, \delta \blambda) = -\bv
    \end{equation}
     is equivalent to the first-order necessary optimality conditions of the linear quadratic optimal control problem
    \begin{equation}     \label{eq:Sensitivity:OCP:LQOCP}
    \begin{aligned}
    	\min_{\delta \bx, \delta \bu, \delta \bp} \quad & \int_{t_0}^{t_f} \begin{pmatrix} \bc_x(t) \\ \bc_u(t) \end{pmatrix}^T \begin{pmatrix} \delta \bx(t) \\ \delta \bu(t) \end{pmatrix} \, dt  \\
    	& + \frac{1}{2} \int_{t_0}^{t_f} \begin{pmatrix} \delta \bx(t) \\ \delta \bu(t) \\ \delta \bp \end{pmatrix}^T \begin{pmatrix} \overline{\BH}_{xx}[t] & \overline{\BH}_{xu}[t] & \overline{\BH}_{xp}[t] \\ \overline{\BH}_{ux}[t] & \overline{\BH}_{uu}[t] & \overline{\BH}_{up}[t] \\ \overline{\BH}_{px}[t] & \overline{\BH}_{pu}[t] & \overline{\BH}_{pp}[t] \end{pmatrix} \begin{pmatrix} \delta \bx(t) \\ \delta \bu(t) \\ \delta \bp \end{pmatrix} \, dt \\
    		& + \begin{pmatrix} \sigma_f \\ \gamma \end{pmatrix}^T \begin{pmatrix} \delta \bx(t_f) \\ \delta \bp \end{pmatrix} 
    		  + \frac{1}{2} \begin{pmatrix} \delta \bx(t_f) \\ \delta \bp \end{pmatrix}^T
    		            \begin{pmatrix}  \nabla_{xx}^2 \overline{\varphi}[t_f] &  \nabla_{xp}^2 \overline{\varphi}[t_f] \\ 
    		                                      \nabla_{px}^2 \overline{\varphi}[t_f] &  \nabla_{pp}^2 \overline{\varphi}[t_f] 
    		           \end{pmatrix} 
    		           \begin{pmatrix} \delta \bx(t_f) \\ \delta \bp \end{pmatrix} \\[1ex]
            \mbox{s.t.} \quad
    	& \delta \bx'(t) = \overline{\BA}[t] \delta \bx(t) + \overline{\BB}[t] \delta \bu(t) + \overline{\BC}[t] \delta \bp + \br(t), \hspace{3cm} \aall t \in I, \\
    	& \delta \bx(t_0) = r_0
    \end{aligned}
    \end{equation}
    with costate $\delta \blambda$.
    Accordingly, to ensure bijectivity of $\bpsi_{(\bx, \bu, \bp, {\tiny \blambda})}(\overline{\bx}, \overline{\bu}, \overline{\bp}, \overline{\blambda}; \overline{\bg})$, it suffices to establish existence and uniqueness of solutions to \eqref{eq:Sensitivity:OCP:LQOCP} in $\cZ^\infty$. The proof proceeds as follows: existence and uniqueness of solutions will be established in $\cZ^2$ using \cref{as:Sensitivity:OCP:SSOC} (i)-(ii), then \cref{as:Sensitivity:OCP:SSOC} (iii) will be used to show that the solution also belongs to $\cZ^\infty$.
    
    Taking \eqref{eq:Sensitivity:OCP:LQOCP} as a problem in 
    \[
    	\cY^2 := \big( W^{1, 2}(I) \big)^{n_x} \times \big( L^2(I) \big)^{n_u} \times \real^{n_p},
    \]
    the objective function is continuous and coercive over the feasible set by \cref{as:Sensitivity:OCP:SSOC} (i)-(ii) and the affine linearity of the constraints, and the feasible set is closed and convex; thus, by \cite[Cor.~3.23]{HBrezis_2011a} there exists a solution $(\delta \bx, \delta \bu, \delta \bp) \in \cY^2$, and furthermore the strong convexity of the objective function over the feasible set implies that the solution is unique. The costate is then given by the unique solution $\delta \blambda \in \big( W^{1, 2}(I) \big)^{n_x}$ of the adjoint equation
    \begin{equation} \label{eq:Sensitivity:OCP:adjoint}
    \begin{aligned}
    	-\delta \blambda'(t) &= \overline{\BH}_{xx}[t] \delta \bx(t) + \overline{\BH}_{xu}[t] \delta \bu(t) + \overline{\BH}_{xp}[t] \delta \bp + \overline{\BA}[t]^T \delta \blambda(t) + \bc_x(t), & \aall t \in I, \\
    	\delta \blambda(t_f) &= \overline{\BH}_{xx}^f \delta \bx(t_f) + \gamma.
    \end{aligned}
    \end{equation}
    Note that the terms $\overline{\BH}_{xx}[\cdot]$, $\overline{\BA}[\cdot]$, etc. have the necessary regularity for this equation to be well-posed in the $L^2$ setting (Remark~\ref{rk:Sensitivity:OCP:regularity-of-terms}).

    It remains to show that the full solution $\delta \bz = (\delta \bx, \delta \bu, \delta \bp, \delta \blambda)$ of \eqref{eq:Sensitivity:OCP:LQOCP} in $\cZ^2$ also belongs to $\cZ^\infty$, thereby establishing that \eqref{eq:Sensitivity:OCP:LQOCP} has a unique solution in $\cZ^\infty$. 
    Using \eref{eq:Sensitivity:OCP:partial-frechet-a}, the fifth equation in \eqref{eq:Sensitivity:OCP:LQOCP0} and
    \cref{as:Sensitivity:OCP:SSOC} (iii) imply
    \[
    	\delta \bu(t) = \overline{\BH}_{uu}[t]^{-1} \Big( \bc_u(t) - \overline{\BH}_{ux}[t] \delta \bx(t) - \overline{\BH}_{up}[t] \delta \bp - \overline{\BB}[t]^T \delta \blambda(t) \Big), \qquad \aall t \in I.
    \]
    Since all functions on the right-hand side of the previous equation are essentially bounded, $\delta \bu \in \big(L^\infty(I) \big)^{n_u}$. 
    From this, it is readily observed that the state equation given by
    \begin{align*}
    \delta \bx'(t) &= \overline{\BA}[t] \delta \bx(t) + \overline{\BB}[t] \delta \bu(t) + \overline{\BC}[t] \delta \bp + \br(t), & \aall t \in I, \\
    	\delta \bx(t_0) &= r_0
    \end{align*}
    has its solution $\delta \bx$ in $\big( W^{1,\infty}(I) \big)^{n_x}$ due to the essential boundedness of the control 
    $\delta \bu$, the matrix-valued functions $\overline{\BA}[\cdot]$, $\overline{\BB}[\cdot]$, $\overline{\BC}[\cdot]$, and 
    the vector-valued function $\br \in \big(L^\infty(I) \big)^{n_x}$. Observe now that $\delta \blambda \in \big( W^{1,\infty}(I) \big)^{n_x}$ 
    follows from the costate equation \eqref{eq:Sensitivity:OCP:adjoint} using the newly derived smoothness properties of 
    $\delta \bx$ and $\delta \bu$. Therefore, the complete solution $\delta \bz = (\delta \bx, \delta \bu, \delta \bp, \delta \blambda) \in \cZ^\infty$ 
    uniquely solves \eqref{eq:Sensitivity:OCP:LQOCP}, thereby establishing bijectivity of 
    $\bpsi_{(\bx, \bu, \bp, {\tiny \blambda})}(\overline{\bx}, \overline{\bu}, \overline{\bp}, \overline{\blambda}; \overline{\bg})$ 
    as a mapping from $\cZ^\infty \times \big(\cG^3(I)\big)^{n_g}$ to $\cV^\infty$.
\end{proof}

\cref{lem:Sensitivity:OCP:cts-frechet} and~\cref{lem:Sensitivity:OCP:bijection} allow application of the Implicit Function \cref{thm:Sensitivity:intro:IFT} to obtain a sensitivity equation that gives the perturbation $\delta \bz = (\delta \bx, \delta \bu, \delta \bp, \delta \blambda) \in \cZ^\infty$ of the solution (including the costate) to \eqref{eq:intro:OCP} when perturbing the model by $\delta \bg \in \big(\cG^3(I)\big)^{n_g}$.
\begin{theorem}   \label{thm:Sensitivity:OCP:OCP-solution-sensitivity} 
       Let $\overline{\bz} := (\overline{\bx}, \overline{\bu}, \overline{\bp}, \overline{\blambda}) \in \cZ^\infty$ and 
        $\overline{\bg} \in \big(\cG^3(I)\big)^{n_g}$ satisfy $\bpsi(\overline{\bz}; \overline{\bg}) = 0$ with $\bpsi$ 
        as in \eqref{eq:Sensitivity:OCP:KKT-operator}. 
	If the assumptions of \cref{lem:Sensitivity:OCP:cts-frechet} and~\cref{lem:Sensitivity:OCP:bijection} hold, 
	then there exist neighborhoods
	\begin{align*}
		&\cN(\overline{\bg}) \subset \big(\cG^3(I)\big)^{n_g}, \\
		&\cN(\overline{\bx}) \subset \big( W^{1,\infty}(I) \big)^{n_x}, \quad
		\cN(\overline{\bu}) \subset \big( L^\infty(I) \big)^{n_u}, \quad
		\cN(\overline{\bp}) \subset \real^{n_p}, \quad
		\cN(\overline{\blambda}) \subset \big( W^{1,\infty}(I) \big)^{n_x}
	\end{align*}
	and unique continuously Fr\'echet differentiable mappings
	\begin{align*}
		\bx : \cN(\overline{\bg}) \rightarrow \cN(\overline{\bx}), \quad
		\bu : \cN(\overline{\bg}) \rightarrow \cN(\overline{\bu}), \quad
		\bp : \cN(\overline{\bg}) \rightarrow \cN(\overline{\bp}), \quad
		\blambda : \cN(\overline{\bg}) \rightarrow \cN(\overline{\blambda})
	\end{align*}
	satisfying
	\begin{align*}
		&\bx(\overline{\bg}) = \overline{\bx}, \quad
		\bu(\overline{\bg}) = \overline{\bu}, \quad
		\bp(\overline{\bg}) = \overline{\bp}, \quad
		\blambda(\overline{\bg}) = \overline{\blambda}, \\
		&\bpsi\big( \bx(\bg), \bu(\bg), \bp(\bg), \blambda(\bg); \bg \big) = 0 \qquad \qquad \mbox{ for all } \; \bg \in \cN(\overline{\bg}).
	\end{align*}
	
	Moreover, the mapping $\bz(\bg) := \big(\bx(\bg), \bu(\bg), \bp(\bg), \blambda(\bg) \big)$ is continuously Fr\'echet differentiable at $\overline{\bg}$ 
         and the Fr\'echet derivative
         \[
                   \delta \bz = \bz_\bg(\overline{\bg}) \delta \bg = (\delta \bx, \delta \bu, \delta \bp, \delta \blambda) \in \cZ^\infty
         \]
          is the unique solution of 
	 \begin{subequations}   \label{eq:Sensitivity:OCP:ocp-solution-sensitivity-system} 
	 \begin{align}
	        \delta \bx'(t) &= \overline{\BA}[t] \delta \bx(t) + \overline{\BB}[t] \delta \bu(t) + \overline{\BC}[t] \delta \bp  
	                                        + \overline{\bff}_g[t] \delta \bg[t], & \aall t \in (t_0, t_f), \\ 
		\delta \bx(t_0) &= 0, \\ 
		\delta \blambda'(t) &= -  \overline{\BA}[t]^T \delta \blambda(t) -\overline{\BH}_{xx}[t] \delta \bx(t) - \overline{\BH}_{xu}[t] \delta \bu(t) \nonumber \\ 
		&\quad - \overline{\BH}_{xp}[t] \delta \bp 
		                                         -  \overline{\BH}_{xg}[t] \delta \bg[t] - \delta \bg_x[t]^T \overline{\bd}[t], & \aall t \in (t_0, t_f), \\ 
		 \delta \blambda(t_f)  &= \nabla_{xx}^2 \overline{\varphi}[t_f] \delta \bx(t_f) + \nabla_{xp}^2 \overline{\varphi}[t_f] \delta \bp,
           \end{align}
           \begin{align}
		&\overline{\BH}_{ux}[t] \delta \bx(t) + \overline{\BH}_{uu}[t] \delta \bu(t) + \overline{\BH}_{up}[t] \delta \bp + \overline{\BB}[t]^T \delta \blambda(t) \nonumber \\
		&= - \overline{\BH}_{ug}[t] \delta \bg[t] - \delta \bg_u[t]^T \overline{\bd}[t],  & \aall t \in (t_0, t_f), 
           \end{align}
           \begin{align}
	         & \nabla_{px}^2 \overline{\varphi}[t_f]\delta \bx(t_f)  
	            + \int_{t_0}^{t_f} \big( \overline{\BH}_{px}[t] \delta \bx(t) + \overline{\BH}_{pu}[t] \delta \bu(t) + \BC[t]^T \delta \blambda(t)  \big) \, dt \nonumber \\
	            &\quad +  \Big( \nabla_{pp}^2 \overline{\varphi}[t_f] +  \int_{t_0}^{t_f}  \overline{\BH}_{pp}[t]  \, dt \Big)  \delta \bp
	        = - \int_{t_0}^{t_f}   \overline{\BH}_{pg}[t] \delta \bg[t] +  \delta \bg_p[t]^T \overline{\bd}[t] \, dt.
         \end{align}
         \end{subequations}
         
         The system \eqref{eq:Sensitivity:OCP:ocp-solution-sensitivity-system}  constitutes the necessary and sufficient optimality conditions for the linear quadratic optimal control
         problem \eqref{eq:Sensitivity:OCP:LQOCP} with
          \begin{subequations}   \label{eq:Sensitivity:OCP:ocp-solution-sensitivity-system-linear} 
	 \begin{align}
		\br(t) &= \overline{\bff}_g[t] \delta \bg[t],  &
		 \begin{pmatrix} \bc_x(t) \\ \bc_u(t) \end{pmatrix} 
		 &= \begin{pmatrix} \overline{\BH}_{xg}[t] \delta \bg[t] + \delta \bg_x[t]^T \overline{\bd}[t] \\  
		                             \overline{\BH}_{ug}[t] \delta \bg[t] + \delta \bg_u[t]^T \overline{\bd}[t]
		        \end{pmatrix}, \\
		r_0 &= \sigma_f = 0, &
		 \gamma &= \int_{t_0}^{t_f}   \overline{\BH}_{pg}[t] \delta \bg[t] + \delta \bg_p[t]^T \overline{\bd}[t] \, dt.
	 \end{align}
	 \end{subequations}
\end{theorem}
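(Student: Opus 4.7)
The plan is to apply the Implicit Function \cref{thm:Sensitivity:intro:IFT} directly to the operator equation \eqref{eq:Sensitivity:OCP:KKT-operator-eq} at the reference point $(\overline{\bz}, \overline{\bg}) \in \cZ^\infty \times \big(\cG^3(I)\big)^{n_g}$, using the Banach spaces $\cZ = \cZ^\infty$, $\cG = \big(\cG^3(I)\big)^{n_g}$, $\cV = \cV^\infty$. Hypothesis (i) of \cref{thm:Sensitivity:intro:IFT} (continuous Fr\'echet differentiability of $\bpsi$) is exactly the content of \cref{lem:Sensitivity:OCP:cts-frechet}, and hypothesis (ii) (bijectivity of the partial derivative with respect to $\bz$) is exactly \cref{lem:Sensitivity:OCP:bijection}. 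The conclusion of \cref{thm:Sensitivity:intro:IFT} yields the neighborhoods $\cN(\overline{\bg}), \cN(\overline{\bx}), \cN(\overline{\bu}), \cN(\overline{\bp}), \cN(\overline{\blambda})$ and the unique continuously Fr\'echet differentiable mappings $\bx(\cdot), \bu(\cdot), \bp(\cdot), \blambda(\cdot)$ with the stated interpolation and optimality properties.

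Next, I would obtain the derivative $\delta \bz = \bz_\bg(\overline{\bg}) \delta \bg$ from the sensitivity formula \eqref{eq:Sensitivity:intro:sensitivity_eqn} of \cref{thm:Sensitivity:intro:IFT}, which is equivalent to asserting that $\delta \bz$ is the unique element of $\cZ^\infty$ solving
\[
    \bpsi_{(\bx, \bu, \bp, {\tiny \blambda})}(\overline{\bx}, \overline{\bu}, \overline{\bp}, \overline{\blambda}; \overline{\bg}) \delta \bz
    = - \bpsi_{\bg}(\overline{\bx}, \overline{\bu}, \overline{\bp}, \overline{\blambda}; \overline{\bg}) \delta \bg.
\]
Substituting the explicit expressions \eqref{eq:Sensitivity:OCP:partial-frechet-a} and \eqref{eq:Sensitivity:OCP:partial-frechet-b} from \cref{lem:Sensitivity:OCP:cts-frechet} component-by-component, and rearranging each of the six block rows, yields exactly the system \eqref{eq:Sensitivity:OCP:ocp-solution-sensitivity-system}. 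Uniqueness of $\delta \bz$ in $\cZ^\infty$ follows from the bijectivity established in \cref{lem:Sensitivity:OCP:bijection}.

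Finally, to identify \eqref{eq:Sensitivity:OCP:ocp-solution-sensitivity-system} with the first-order necessary and sufficient optimality conditions of the linear quadratic OCP \eqref{eq:Sensitivity:OCP:LQOCP}, I would match the general right-hand side vector $\bv = (\br, r_0, \bc_x, \sigma_f, \bc_u, \gamma) \in \cV^\infty$ introduced in the proof of \cref{lem:Sensitivity:OCP:bijection} with the specific right-hand side $-\bpsi_\bg(\overline{\bz}; \overline{\bg}) \delta \bg$ in \eqref{eq:Sensitivity:OCP:partial-frechet-b}; reading off the nonzero components gives precisely the assignments \eqref{eq:Sensitivity:OCP:ocp-solution-sensitivity-system-linear}. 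That these are both necessary and sufficient optimality conditions for \eqref{eq:Sensitivity:OCP:LQOCP} already follows from the argument in \cref{lem:Sensitivity:OCP:bijection}: \cref{as:Sensitivity:OCP:SSOC}(i)-(ii) guarantee strict convexity of the LQ objective on the affine constraint set, so the KKT system characterizes its unique global minimum.

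The proof is therefore essentially bookkeeping: the substantial work (differentiability, bijectivity, and equivalence with the LQ OCP) has already been carried out in \cref{lem:Sensitivity:OCP:cts-frechet} and \cref{lem:Sensitivity:OCP:bijection}. The only minor obstacle is carefully expanding $\bpsi_\bz \delta \bz = -\bpsi_\bg \delta \bg$ into the six-line system \eqref{eq:Sensitivity:OCP:ocp-solution-sensitivity-system} and verifying that the resulting data \eqref{eq:Sensitivity:OCP:ocp-solution-sensitivity-system-linear} matches the general form used in \eqref{eq:Sensitivity:OCP:LQOCP}, which is a routine but notation-heavy calculation.
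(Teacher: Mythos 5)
Your proposal is correct and takes essentially the same approach the paper intends: the paper states this theorem without an explicit proof, treating it as an immediate corollary of \cref{lem:Sensitivity:OCP:cts-frechet} and \cref{lem:Sensitivity:OCP:bijection} via the Implicit Function \cref{thm:Sensitivity:intro:IFT}, and your write-up spells out exactly that chain of reasoning together with the routine bookkeeping of matching $\bpsi_\bz\,\delta\bz = -\bpsi_\bg\,\delta\bg$ against the LQOCP data $(\br, r_0, \bc_x, \sigma_f, \bc_u, \gamma)$ from \eqref{eq:Sensitivity:OCP:LQOCP0}--\eqref{eq:Sensitivity:OCP:LQOCP}.
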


\subsection{Fr\'echet Differentiability of the QoI}      \label{sec:sensitivity-QoI}

Next, consider the sensitivity of the QoI \eqref{eq:intro:QoI} defined in the spaces
\begin{equation} \label{eq:Sensitivity:OCP:QoI-tilde}
\begin{aligned}
	q &:  \cY^\infty  \times \big(\cG^3(I)\big)^{n_g} \rightarrow \real, \\
	\widetilde{q} &: \big(\cG^3(I)\big)^{n_g} \rightarrow \real.
\end{aligned}
\end{equation}
The continuously Fr\'echet differentiability of \eqref{eq:Sensitivity:OCP:QoI-tilde} is a consequence of
the chain rule and is stated in the following \cref{thm:Sensitivity:OCP:sensitivity_result_qoi}.
More importantly, the application of the  Fr\'echet derivative of $\widetilde{q}$ applied to any $\delta \bg$
can be efficiently computed using the adjoint equation approach stated in
\cref{thm:Sensitivity:OCP:sensitivity_result_qoi-adjoint} below.

\begin{theorem} \label{thm:Sensitivity:OCP:sensitivity_result_qoi}
	If \cref{as:Sensitivity:OCP:smoothness-strong} holds with $\varphi$ and $l$ replaced by $\phi$ and $\ell$ respectively, 
	then $q$ in \eqref{eq:intro:QoI} is continuously Fr\'echet differentiable in the spaces \eqref{eq:Sensitivity:OCP:QoI-tilde}, and its derivative is given by
	\begin{align} \label{eq:Sensitivity:OCP:q-Frechet_deriv}
		 &  q'(\overline{\bx}, \overline{\bu}, \overline{\bp}; \overline{\bg})(\delta \bx, \delta \bu, \delta \bp, \delta \bg)    \nonumber \\
		 &= \nabla_x  \overline{\phi}[t_f]^T \delta \bx(t_f)  +  \nabla_p  \overline{\phi}[t_f]^T \delta  \bp   \nonumber \\
		 &\quad  + \int_{t_0}^{t_f} \big( \nabla_x \overline{\ell}[t] + \overline{\bg}_x[t]^T \nabla_g \overline{\ell}[t]  \big)^T  \delta \bx(t)  
		                                            + \big(  \nabla_u \overline{\ell}[t] + \overline{\bg}_u[t]^T \nabla_g \overline{\ell}[t] \big)^T \delta \bu(t)   \nonumber \\
		 &\quad\qquad   + \big( \nabla_p \overline{\ell}[t] + \overline{\bg}_p[t]^T \nabla_g \overline{\ell}[t] \big)^T \delta \bp 
		                          +  \nabla_g \overline{\ell}[t]^T \delta \bg[t] \, dt,
	\end{align}
	using the shorthand
       \begin{equation*}
        \overline{\ell}[\cdot] :=  \ell\Big( \cdot, \overline{\bx}(\cdot), \overline{\bu}(\cdot), \overline{\bp}, \, \overline{\bg} \big( \cdot, \overline{\bx}(\cdot), \overline{\bu}(\cdot), \overline{\bp} \big) \Big),  \qquad
    	\overline{\phi}[t_f] := \phi\big( \overline{\bx}(t_f), \overline{\bp} \big)
       \end{equation*}
       and corresponding shorthand for partial derivatives.
       
	If, in addition, the assumptions of \cref{thm:Sensitivity:OCP:OCP-solution-sensitivity} hold, then $\widetilde{q}$ in \eqref{eq:intro:QoI} is continuously Fr\'echet differentiable in the spaces \eqref{eq:Sensitivity:OCP:QoI-tilde}, and its derivative
	at $\overline{\bg}$  is given by
         \begin{align*}
	       \widetilde{q}_\bg(\overline{\bg}) \delta \bg 
	       =  q'\big(\overline{\bx}(\cdot \, ; \overline{\bg}), \overline{\bu}(\cdot \, ; \overline{\bg}), \overline{\bp}(\overline{\bg}), \overline{\bg} \big) 
	               (\delta \bx, \delta \bu, \delta \bp, \delta \bg),
	\end{align*}
	where $\big(\overline{\bx}(\cdot \, ; \overline{\bg})$, $\overline{\bu}(\cdot \, ; \overline{\bg})$, $\overline{\bp}(\overline{\bg})\big) \in \cY^\infty$ is the solution of
        the optimal control problem \eqref{eq:intro:OCP} with model $\bg = \overline{\bg} \in \big(\cG^3(I)\big)^{n_g}$ 
        and $(\delta \bx, \delta \bu, \delta \bp, \delta \blambda) \in \cZ^\infty$ is the solution of \eqref{eq:Sensitivity:OCP:ocp-solution-sensitivity-system}.
\end{theorem}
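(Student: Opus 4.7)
The plan is to discharge the two claims in sequence, with the first (for $q$) feeding into the second (for $\widetilde{q}$) via the chain rule. For the first claim, I would decompose
\begin{equation*}
    q(\bx,\bu,\bp;\bg) = \phi\big(\bx(t_f),\bp\big) + \int_{t_0}^{t_f} \BL(\by,\bg)(t)\, dt,
\end{equation*}
where $\BL$ is the Nemytskii operator of \eqref{eq:Sensitivity:nemytskii:nemytskii-L} taken with $\ell$ in place of $l$, and treat the two summands separately. The terminal piece is continuously Fr\'echet differentiable on $\cY^\infty$ because $\phi$ is $C^2$ by the version of \cref{as:Sensitivity:OCP:smoothness-strong} imposed on $\phi$ and $\ell$, and because $\bx \mapsto \bx(t_f)$ is bounded linear from $\WW^{n_x}$ into $\real^{n_x}$.

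For the integral piece, I would verify that items (ii)--(iv) of \cref{as:Sensitivity:OCP:smoothness-strong} applied to $\ell$ supply exactly the boundedness and local Lipschitz hypotheses that \cref{thm:Sensitivity:nemytskii:nemytskii-derivative} asks of $\bphi = \ell$, so that $\BL : \LL^{n_y} \times \big(\cG^2(I)\big)^{n_g} \to L^\infty(I)$ is continuously Fr\'echet differentiable with derivative given by \eqref{eq:Sensitivity:nemytskii:Frechet_deriv}. Combining this with the continuous embeddings $\cY^\infty \hookrightarrow \LL^{n_y}$ and $\big(\cG^3(I)\big)^{n_g} \hookrightarrow \big(\cG^2(I)\big)^{n_g}$ recorded in \cref{rk:sensitivity:cts-embed}, and with the fact that $\int_{t_0}^{t_f}(\cdot)\, dt$ is a bounded linear functional on $L^\infty(I)$, the chain rule yields continuous Fr\'echet differentiability of the integral piece on $\cY^\infty \times \big(\cG^3(I)\big)^{n_g}$. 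Substituting \eqref{eq:Sensitivity:nemytskii:Frechet_deriv} specialized to $\bphi = \ell$ and splitting $y = (x,u,p)$ back into its three blocks reproduces exactly the expression \eqref{eq:Sensitivity:OCP:q-Frechet_deriv}.

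The second claim is then a straightforward composition. Under the hypotheses of \cref{thm:Sensitivity:OCP:OCP-solution-sensitivity}, the mapping $\bg \mapsto \big(\bx(\bg),\bu(\bg),\bp(\bg)\big)$ is continuously Fr\'echet differentiable from a neighborhood $\cN(\overline{\bg}) \subset \big(\cG^3(I)\big)^{n_g}$ into $\cY^\infty$, and the image $(\delta\bx,\delta\bu,\delta\bp)$ of a perturbation $\delta\bg$ under its derivative is characterized jointly with $\delta\blambda$ by the sensitivity system \eqref{eq:Sensitivity:OCP:ocp-solution-sensitivity-system}. Writing $\widetilde{q}(\bg) = q\big(\bx(\bg),\bu(\bg),\bp(\bg);\bg\big)$ as a composition of this solution mapping (together with the identity on $\bg$) with the continuously Fr\'echet differentiable $q$ from the first part, the chain rule simultaneously delivers continuous Fr\'echet differentiability of $\widetilde{q}$ on $\cN(\overline{\bg})$ and the claimed identity for $\widetilde{q}_\bg(\overline{\bg})\delta\bg$.

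The only step requiring genuine care, and thus the main obstacle, is the space bookkeeping in the first claim: \cref{thm:Sensitivity:nemytskii:nemytskii-derivative} produces continuous differentiability in $\LL^{n_y} \times \big(\cG^2(I)\big)^{n_g}$, while the statement requires differentiability in the strictly finer $\cY^\infty \times \big(\cG^3(I)\big)^{n_g}$, and the transition is mediated precisely by the continuous embeddings noted in \cref{rk:sensitivity:cts-embed}. Beyond this, no new analytic estimates are needed; the theorem is a genuine corollary of the preceding Nemytskii calculus and the implicit-function analysis of \cref{thm:Sensitivity:OCP:OCP-solution-sensitivity}.
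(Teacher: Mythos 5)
Your proposal matches the paper's own proof in both structure and all key ideas: decompose $q$ into the terminal term (whose differentiability is straightforward) and the integral term, establish continuous Fr\'echet differentiability of the latter via the Nemytskii calculus of \cref{thm:Sensitivity:nemytskii:nemytskii-derivative} together with the fact that $\int_{t_0}^{t_f}(\cdot)\,dt$ is a bounded linear functional on $L^\infty(I)$, and then obtain the statement for $\widetilde{q}$ by composing with the solution map from \cref{thm:Sensitivity:OCP:OCP-solution-sensitivity} and applying the chain rule. You merely spell out the space bookkeeping (the embeddings $\cY^\infty \hookrightarrow \LL^{n_y}$ and $\big(\cG^3(I)\big)^{n_g} \hookrightarrow \big(\cG^2(I)\big)^{n_g}$) more explicitly than the paper, which leaves those transitions implicit.
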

\begin{proof}
The differentiability of the first term in \eqref{eq:intro:QoI-bolza} is obvious. The continuous Fr\'echet differentiability of the second term follows from the continuous Fr\'echet differentiability of the Nemytskii operator 
\[
	\cY^\infty \times \big(\cG^3(I)\big)^{n_g} \ni (\by, \bg) \mapsto \ell\Big( \cdot, \by(\cdot), \bg \big( \cdot, \by(\cdot) \big) \Big) \in L^\infty(I)
\] 
as a consequence of \cref{thm:Sensitivity:nemytskii:nemytskii-derivative} and the fact that integration over $I$ defines a bounded linear functional on $L^\infty(I)$. This proves the first part of the theorem. The second part immediately follows from the chain rule, with the sensitivity of the OCP solution given by the solution of the system of equations \eqref{eq:Sensitivity:OCP:ocp-solution-sensitivity-system} in \cref{thm:Sensitivity:OCP:OCP-solution-sensitivity}.
\end{proof}

Finally, the following theorem uses adjoints to compute the Fr\'echet derivative of $\widetilde{q}$ in \eqref{eq:Sensitivity:OCP:QoI-tilde} without computing the sensitivity of the OCP solution.
\begin{theorem} \label{thm:Sensitivity:OCP:sensitivity_result_qoi-adjoint}
	If the assumptions of \cref{thm:Sensitivity:OCP:sensitivity_result_qoi} hold, 
    then 
    \begin{align} \label{eq:Sensitivity:OCP:sensitivity_result_qoi_adjoint}
    	\widetilde{q}_\bg(\overline{\bg}) \delta \bg 
	= &  \int_{t_0}^{t_f} \!\! \Big(  \overline{\BH}_{gx}[t]  \widetilde{\delta \bx}(t) 
                                                  + \overline{\BH}_{gu}[t]  \widetilde{\delta \bu}(t)
                                                  + \overline{\BH}_{gp}[t]  \widetilde{\delta \bp}
                                                  + \overline{\bff}_{g}[t]^T \widetilde{\delta \blambda}(t) 
                                                  +   \nabla_g \overline{\ell}[t]  \Big)^T  \delta \bg[t]   \, dt	    \nonumber \\ 
            &   +  \int_{t_0}^{t_f}  \Big(  \overline{\bd}[t]^T \delta \bg_x[t] \widetilde{\delta \bx}(t) \Big)  
                                             +  \Big(  \overline{\bd}[t]^T \delta \bg_u[t] \widetilde{\delta \bu}(t) \Big) 
                                             +  \Big(  \overline{\bd}[t]^T \delta \bg_p[t] \widetilde{\delta \bp} \Big)  \, dt,
    \end{align}
          where 
                   $(\widetilde{\delta \bx}, \; \widetilde{\delta \bu}, \; \widetilde{\delta \bp}, \; \widetilde{\delta \blambda}) \in \cZ^\infty$
	solves the adjoint equations
	 \begin{subequations}   \label{eq:Sensitivity:OCP:ocp-solution-sensitivity-system-adjoint} 
	 \begin{align}
	        \widetilde{\delta \bx}'(t) &= \overline{\BA}[t] \widetilde{\delta \bx}(t) + \overline{\BB}[t] \widetilde{\delta \bu}(t) + \overline{\BC}[t] \widetilde{\delta \bp}, & \aall t \in (t_0, t_f), \\ 
		\widetilde{\delta \bx}(t_0) &= 0, \\ 
		\widetilde{\delta \blambda}'(t) &= -  \overline{\BA}[t]^T \widetilde{\delta \blambda}(t)  -\overline{\BH}_{xx}[t] \widetilde{\delta \bx}(t) - \overline{\BH}_{xu}[t] \widetilde{\delta \bu}(t)  \nonumber \\
		                                                     &\quad- \overline{\BH}_{xp}[t] \widetilde{\delta \bp}
		                                       -  \nabla_x \overline{\ell}[t] - \overline{\bg}_x[t]^T \nabla_g \overline{\ell}[t]   & \aall t \in (t_0, t_f), \\ 
		 \widetilde{\delta \blambda}(t_f)  &= \nabla_x \overline{\phi}[t_f] +  \nabla_{xx}^2 \overline{\varphi}[t_f] \widetilde{\delta \bx}(t_f) + \nabla_{xp}^2 \overline{\varphi}[t_f] \widetilde{\delta \bp},
           \end{align}
           \begin{align}
		&\overline{\BH}_{ux}[t] \widetilde{\delta \bx}(t) + \overline{\BH}_{uu}[t] \widetilde{\delta \bu}(t) + \overline{\BH}_{up}[t] \widetilde{\delta \bp} + \overline{\BB}[t]^T \widetilde{\delta \blambda}(t) \nonumber \\
		&=  - \nabla_u \overline{\ell}[t] - \overline{\bg}_u[t]^T \nabla_g \overline{\ell}[t],  & \aall t \in (t_0, t_f), 
           \end{align}
           \begin{align}
	         & \nabla_{px}^2 \overline{\varphi}[t_f] \widetilde{\delta \bx}(t_f)  
	            +  \int_{t_0}^{t_f} \big( \overline{\BH}_{px}[t] \widetilde{\delta \bx}(t) + \overline{\BH}_{pu}[t] \widetilde{\delta \bu}(t) + \overline{\BC}[t]^T \widetilde{\delta \blambda}(t)  \big) \, dt \nonumber \\
	            & \quad +  \Big( \nabla_{pp}^2 \overline{\varphi}[t_f] +  \int_{t_0}^{t_f}  \overline{\BH}_{pp}[t]  \, dt \Big)  \widetilde{\delta \bp} = -  \nabla_p \overline{\phi}[t_f] -  \int_{t_0}^{t_f}    \big( \nabla_p \overline{\ell}[t] + \overline{\bg}_p[t]^T \nabla_g \overline{\ell}[t] \big)  \, dt.
         \end{align}
         \end{subequations}
         
         The conditions \eqref{eq:Sensitivity:OCP:ocp-solution-sensitivity-system-adjoint} are equivalent to the system of necessary and sufficient optimality conditions for the linear quadratic optimal control problem \eqref{eq:Sensitivity:OCP:LQOCP} with $(\delta \bx, \delta \bu, \delta \bp)$ replaced by $(\widetilde{\delta \bx}, \widetilde{\delta \bu}, \widetilde{\delta \bp})$ and using
          \begin{subequations}   \label{eq:Sensitivity:OCP:ocp-solution-sensitivity-system-adjoint-linear} 
	 \begin{align}
		\br(t) &\equiv 0,  &
		 \begin{pmatrix} \bc_x(t) \\ \bc_u(t) \end{pmatrix} 
		 &= \begin{pmatrix}   \nabla_x \overline{\ell}[t] + \overline{\bg}_x[t]^T \nabla_g \overline{\ell}[t] \\  
		                      \nabla_u \overline{\ell}[t] + \overline{\bg}_u[t]^T \nabla_g \overline{\ell}[t]
		        \end{pmatrix}, \\
		r_0 &= 0, \; \sigma_f = \nabla_x \overline{\phi}[t_f], &
		 \gamma &= \nabla_p \overline{\phi}[t_f] + \int_{t_0}^{t_f} \nabla_p \overline{\ell}[t] + \overline{\bg}_p[t]^T \nabla_g \overline{\ell}[t] \, dt
	 \end{align}
	  \end{subequations}  
	 with $\widetilde{\delta \blambda}$ as the costate.
\end{theorem}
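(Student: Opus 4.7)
The plan is to derive \eqref{eq:Sensitivity:OCP:sensitivity_result_qoi_adjoint} via the standard adjoint/duality technique, exploiting the observation explicitly made in the theorem statement: both the primal sensitivity system \eqref{eq:Sensitivity:OCP:ocp-solution-sensitivity-system} and the proposed adjoint system \eqref{eq:Sensitivity:OCP:ocp-solution-sensitivity-system-adjoint} are the necessary and sufficient optimality conditions for the \emph{same} linear quadratic optimal control problem \eqref{eq:Sensitivity:OCP:LQOCP}, with different right-hand-side data given by \eqref{eq:Sensitivity:OCP:ocp-solution-sensitivity-system-linear} and \eqref{eq:Sensitivity:OCP:ocp-solution-sensitivity-system-adjoint-linear}, respectively. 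Existence and uniqueness in $\cZ^\infty$ of $(\widetilde{\delta \bx}, \widetilde{\delta \bu}, \widetilde{\delta \bp}, \widetilde{\delta \blambda})$ therefore follow immediately from \cref{lem:Sensitivity:OCP:bijection}, and only the identity \eqref{eq:Sensitivity:OCP:sensitivity_result_qoi_adjoint} itself needs to be established.

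Starting from the expression for $\widetilde{q}_\bg(\overline{\bg})\delta \bg$ in \cref{thm:Sensitivity:OCP:sensitivity_result_qoi}, I would introduce a Lagrangian-type functional obtained by adding to $q'(\overline{\bx},\overline{\bu},\overline{\bp};\overline{\bg})(\delta \bx,\delta \bu,\delta \bp,\delta \bg)$ the pairings of the residuals of each row of \eqref{eq:Sensitivity:OCP:ocp-solution-sensitivity-system} with the corresponding multiplier from $(\widetilde{\delta \blambda},\widetilde{\delta \bx},\widetilde{\delta \bu},\widetilde{\delta \bp})$. Because $(\delta \bx,\delta \bu,\delta \bp,\delta \blambda)$ solves \eqref{eq:Sensitivity:OCP:ocp-solution-sensitivity-system}, this modified functional equals $\widetilde{q}_\bg(\overline{\bg})\delta \bg$ for every choice of multipliers. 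Integration by parts on $[t_0,t_f]$ then moves the time derivatives off $\delta \bx'$ and $\delta \blambda'$, producing boundary contributions at $t_0$ (which vanish since $\delta \bx(t_0)=0=\widetilde{\delta \bx}(t_0)$) and at $t_f$ (which are absorbed by the transversality conditions imposed on $\widetilde{\delta \blambda}(t_f)$ in \eqref{eq:Sensitivity:OCP:ocp-solution-sensitivity-system-adjoint}). Collecting the coefficients of $\delta \bx(t)$, $\delta \bu(t)$, and $\delta \bp$ and requiring them to vanish reproduces exactly the differential, algebraic, and scalar adjoint equations of \eqref{eq:Sensitivity:OCP:ocp-solution-sensitivity-system-adjoint}. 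What remains is a functional of $\delta \bg$, $\delta \bg_x$, $\delta \bg_u$, and $\delta \bg_p$ only, whose expression is precisely the right-hand side of \eqref{eq:Sensitivity:OCP:sensitivity_result_qoi_adjoint}, with the first group of terms arising from the $\delta \bg[t]$-dependent right-hand side of each row of \eqref{eq:Sensitivity:OCP:ocp-solution-sensitivity-system-linear} and the second group of $\overline{\bd}^T \delta \bg_y$ terms arising from the $\delta \bg_x$, $\delta \bg_u$, and $\delta \bg_p$ contributions in rows (c), (e), (f).

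The main obstacle is purely bookkeeping: one must carefully track signs, match coefficients of each primal sensitivity against the adjoint equations, and handle the parameter $\delta \bp$ separately (its pairing is a scalar/integral one, not a pointwise-in-$t$ one, so the scalar adjoint equation in \eqref{eq:Sensitivity:OCP:ocp-solution-sensitivity-system-adjoint} plays the same role that the pointwise equations play for $\delta \bx$ and $\delta \bu$). An equivalent and conceptually cleaner route is to note that the symmetry of the Hessian block in \eqref{eq:Sensitivity:OCP:LQOCP} (with symmetric cross-terms $\overline{\BH}_{xu}$, $\overline{\BH}_{xp}$, $\overline{\BH}_{up}$, and symmetric $\nabla^2\overline{\varphi}[t_f]$) makes the KKT operator $\bpsi_{(\bx,\bu,\bp,\blambda)}(\overline{\bz};\overline{\bg})$ self-adjoint with respect to a natural duality pairing between $\cZ^\infty$ and $\cV^\infty$; the identity $\langle \bv^{\mathrm{QoI}}, \delta \bz\rangle = \langle \widetilde{\delta \bz}, \bv^{\delta \bg}\rangle$ then follows, and substituting \eqref{eq:Sensitivity:OCP:ocp-solution-sensitivity-system-linear} and \eqref{eq:Sensitivity:OCP:ocp-solution-sensitivity-system-adjoint-linear} reads off \eqref{eq:Sensitivity:OCP:sensitivity_result_qoi_adjoint} directly from \cref{thm:Sensitivity:OCP:sensitivity_result_qoi}.
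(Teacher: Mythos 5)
Your proposal is correct and follows essentially the same route as the paper: the paper's proof is exactly the explicit bookkeeping version of your Lagrangian/adjoint argument, pairing each row of the primal sensitivity system \eqref{eq:Sensitivity:OCP:ocp-solution-sensitivity-system} against the corresponding row of \eqref{eq:Sensitivity:OCP:ocp-solution-sensitivity-system-adjoint}, integrating by parts so that the $t_0$ boundary terms vanish and the $t_f$ terms are absorbed by the transversality conditions, and then forming a signed combination of the resulting six scalar identities to replace the $\delta\bx,\delta\bu,\delta\bp$-dependent terms in \cref{thm:Sensitivity:OCP:sensitivity_result_qoi} with the $\delta\bg$-dependent expression in \eqref{eq:Sensitivity:OCP:sensitivity_result_qoi_adjoint}. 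The self-adjointness observation you give as an alternative is a valid abstraction of the same computation and is what the paper alludes to when it remarks that the sensitivity and adjoint LQOCPs coincide except for their linear data.
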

\begin{proof}
   Equations (\ref{eq:Sensitivity:OCP:ocp-solution-sensitivity-system}a,b) and   (\ref{eq:Sensitivity:OCP:ocp-solution-sensitivity-system-adjoint}c,d) imply
   \begin{align}  \label{eq:Sensitivity:OCP:ocp-solution-sensitivity-system-adjoint1} 
        &  \nabla_x \overline{\varphi}[t_f]^T \delta \bx(t_f) + \delta \bx(t_f)^T \nabla_{xx}^2 \overline{\varphi}[t_f] \widetilde{\delta \bx}(t_f) + \delta \bx(t_f)^T \nabla_{xp}^2 \overline{\varphi}[t_f] \widetilde{\delta \bp}  \nonumber \\
        &= \widetilde{\delta \blambda}(t_f)^T  \delta \bx(t_f) - \widetilde{\delta \blambda}(t_0)^T \delta \bx(t_0)
            =  \int_{t_0}^{t_f} \widetilde{\delta \blambda}'(t)^T \delta \bx(t) + \widetilde{\delta \blambda}(t)^T \delta \bx'(t) \, dt \nonumber \\
        &= \int_{t_0}^{t_f} - \delta \bx(t)^T \overline{\BH}_{xx}[t] \widetilde{\delta \bx}(t) 
                                       -  \delta \bx(t)^T \overline{\BH}_{xu}[t] \widetilde{\delta \bu}(t) 
		                      -  \delta \bx(t)^T \overline{\BH}_{xp}[t] \widetilde{\delta \bp}   \, dt \nonumber \\
          &\quad  +  \int_{t_0}^{t_f} \widetilde{\delta \blambda}(t)^T \overline{\BB}[t] \delta \bu(t)  
                                          +  \widetilde{\delta \blambda}(t)^T \overline{\BC}[t] \delta \bp \, dt \nonumber \\
	  &\quad -  \int_{t_0}^{t_f}  \delta \bx(t)^T \big( \nabla_x \overline{\ell}[t] + \overline{\bg}_x[t]^T \nabla_g \overline{\ell}[t] \big)   \, dt
	                +  \int_{t_0}^{t_f} \widetilde{\delta \blambda}(t)^T  \overline{\bff}_{g}[t] \delta \bg[t]   \, dt.	                    
   \end{align}
    Equations (\ref{eq:Sensitivity:OCP:ocp-solution-sensitivity-system}c,d) and   (\ref{eq:Sensitivity:OCP:ocp-solution-sensitivity-system-adjoint}a,b) imply
   \begin{align}  \label{eq:Sensitivity:OCP:ocp-solution-sensitivity-system-adjoint2} 
        &  \delta \bx(t_f)^T  \nabla_{xx}^2 \overline{\varphi}[t_f] \widetilde{\delta \bx}(t_f) + \delta \bp^T \nabla_{px}^2 \overline{\varphi}[t_f] \widetilde{\delta \bx}(t_f) \nonumber \\
         &= \delta \blambda(t_f)^T  \widetilde{\delta \bx}(t_f) - \delta \blambda(t_0)^T \widetilde{\delta \bx}(t_0)
            =  \int_{t_0}^{t_f} \delta \blambda'(t)^T \widetilde{\delta \bx}(t) + \delta \blambda(t)^T \widetilde{\delta \bx}'(t) \, dt \nonumber \\
         &= \int_{t_0}^{t_f} - \widetilde{\delta \bx}(t)^T \overline{\BH}_{xx}[t] \delta \bx(t) 
                                       -  \widetilde{\delta \bx}(t)^T \overline{\BH}_{xu}[t] \delta \bu(t) 
		                      -  \widetilde{\delta \bx}(t)^T \overline{\BH}_{xp}[t] \delta \bp   \, dt \nonumber \\
           & \quad +  \int_{t_0}^{t_f} \delta \blambda(t)^T \overline{\BB}[t] \widetilde{\delta \bu}(t)  
                                          +  \delta \blambda(t)^T \overline{\BC}[t] \widetilde{\delta \bp} \, dt \nonumber \\
	   &\quad  -  \int_{t_0}^{t_f}  \widetilde{\delta \bx}(t)^T \big(  \overline{\BH}_{xg}[t] \delta \bg[t] + \delta \bg_x[t]^T \overline{\bd}[t] \big) \, dt.	         
   \end{align}
     Equations (\ref{eq:Sensitivity:OCP:ocp-solution-sensitivity-system}e) and   (\ref{eq:Sensitivity:OCP:ocp-solution-sensitivity-system-adjoint}e) imply
   \begin{align}  \label{eq:Sensitivity:OCP:ocp-solution-sensitivity-system-adjoint3} 
        & \int_{t_0}^{t_f} \widetilde{\delta \bu}(t)^T \big(  \overline{\BH}_{ux}[t] \delta \bx(t) + \overline{\BH}_{uu}[t] \delta \bu(t) + \overline{\BH}_{up}[t] \delta \bp + \overline{\BB}[t]^T \delta \blambda(t)  \big) \, dt    \nonumber \\
	&=  - \int_{t_0}^{t_f} \widetilde{\delta \bu}(t)^T \big(  \overline{\BH}_{ug}[t] \delta \bg[t] + \delta \bg_u[t]^T \overline{\bd}[t] \big) \, dt,      
   \end{align}
   \begin{align}  \label{eq:Sensitivity:OCP:ocp-solution-sensitivity-system-adjoint4} 
        & \int_{t_0}^{t_f} \delta \bu(t)^T \big(  \overline{\BH}_{ux}[t] \widetilde{\delta \bx}(t) + \overline{\BH}_{uu}[t] \widetilde{\delta \bu}(t) + \overline{\BH}_{up}[t] \widetilde{\delta \bp} + \overline{\BB}[t]^T \widetilde{\delta \blambda}(t)  \big) \, dt    \nonumber \\
	&=  - \int_{t_0}^{t_f} \delta \bu(t)^T \big(  \nabla_u \overline{\ell}[t] + \overline{\bg}_u[t]^T \nabla_g \overline{\ell}[t] \big) \, dt.
   \end{align}
      Equations (\ref{eq:Sensitivity:OCP:ocp-solution-sensitivity-system}f) and   (\ref{eq:Sensitivity:OCP:ocp-solution-sensitivity-system-adjoint}f) imply
   \begin{align}  \label{eq:Sensitivity:OCP:ocp-solution-sensitivity-system-adjoint5} 
              & \widetilde{\delta \bp}^T \nabla_{px}^2 \overline{\varphi}[t_f]\delta \bx(t_f)  
	            + \widetilde{\delta \bp}^T  \int_{t_0}^{t_f} \big( \overline{\BH}_{px}[t] \delta \bx(t) + \overline{\BH}_{pu}[t] \delta \bu(t)  \big) \, dt 
	          \nonumber \\ &\quad +  \widetilde{\delta \bp}^T \big( \nabla_{pp}^2 \overline{\varphi}[t_f] +  \int_{t_0}^{t_f}  \overline{\BH}_{pp}[t]  \, dt \big)  \delta \bp \nonumber \\
	        &= - \widetilde{\delta \bp} ^T \int_{t_0}^{t_f}   \overline{\BH}_{pg}[t] \delta \bg[t] + \delta \bg_p[t]^T \overline{\bd}[t] \, dt,
   \end{align}
   \begin{align}  \label{eq:Sensitivity:OCP:ocp-solution-sensitivity-system-adjoint6} 
          & \delta \bp^T \nabla_{px}^2 \overline{\varphi}[t_f] \widetilde{\delta \bx}(t_f) + \delta \bp^T \int_{t_0}^{t_f} \big( \overline{\BH}_{px}[t] \widetilde{\delta \bx}(t) + \overline{\BH}_{pu}[t] \widetilde{\delta \bu}(t)  \big) \, dt \nonumber \\
	        &\quad +  \delta \bp^T \Big( \nabla_{pp}^2 \overline{\varphi}[t_f] +  \int_{t_0}^{t_f}  \overline{\BH}_{pp}[t]  \, dt \Big)  \widetilde{\delta \bp} 
	  = -\delta \bp^T \nabla_p \overline{\phi}[t_f]  -  \delta \bp^T \big( \nabla_p \overline{\ell}[t] + \overline{\bg}_p[t]^T \nabla_g \overline{\ell}[t] \big).
   \end{align}
   Next, by adding/subtracting equations \eqref{eq:Sensitivity:OCP:ocp-solution-sensitivity-system-adjoint1} through \eqref{eq:Sensitivity:OCP:ocp-solution-sensitivity-system-adjoint6}, specifically, 
    taking \eqref{eq:Sensitivity:OCP:ocp-solution-sensitivity-system-adjoint1} -  \eqref{eq:Sensitivity:OCP:ocp-solution-sensitivity-system-adjoint2} +  \eqref{eq:Sensitivity:OCP:ocp-solution-sensitivity-system-adjoint3} - \eqref{eq:Sensitivity:OCP:ocp-solution-sensitivity-system-adjoint4}
    +  \eqref{eq:Sensitivity:OCP:ocp-solution-sensitivity-system-adjoint5} - \eqref{eq:Sensitivity:OCP:ocp-solution-sensitivity-system-adjoint6},
    we obtain
     \begin{align}  \label{eq:Sensitivity:OCP:ocp-solution-sensitivity-system-adjoint7} 
        &  \nabla_x \overline{\phi}[t_f]^T \delta \bx(t_f) +  \nabla_p \overline{\phi}[t_f]^T \delta \bp  
          +  \int_{t_0}^{t_f} \delta \bx(t)^T \big( \nabla_x \overline{\ell}[t] + \overline{\bg}_x[t]^T \nabla_g \overline{\ell}[t] \big)  \, dt	   \nonumber \\
        & + \int_{t_0}^{t_f} \delta \bu(t)^T \big(  \nabla_u \overline{\ell}[t] + \overline{\bg}_u[t]^T \nabla_g \overline{\ell}[t] \big) \, dt   
           +\delta \bp^T \int_{t_0}^{t_f} \big( \nabla_p \overline{\ell}[t] + \overline{\bg}_p[t]^T \nabla_g \overline{\ell}[t] \big)  \, dt  \nonumber \\
        &=  \int_{t_0}^{t_f} \widetilde{\delta \blambda}(t)^T  \overline{\bff}_{g}[t] \delta \bg[t]   \, dt	  
                 +  \int_{t_0}^{t_f}  \widetilde{\delta \bx}(t)^T \big(  \overline{\BH}_{xg}[t] \delta \bg[t] + \delta \bg_x[t]^T \overline{\bd}[t] \big) \, dt \nonumber \\
        &\quad +       \int_{t_0}^{t_f} \widetilde{\delta \bu}(t)^T \big(  \overline{\BH}_{ug}[t] \delta \bg[t] +  \delta \bg_u[t]^T \overline{\bd}[t] \big) \, dt
                     +  \widetilde{\delta \bp} ^T \int_{t_0}^{t_f}   \overline{\BH}_{pg}[t] \delta \bg[t] +  \delta \bg_p[t]^T \overline{\bd}[t] \, dt.
   \end{align}
   Inserting \eqref{eq:Sensitivity:OCP:ocp-solution-sensitivity-system-adjoint7}  into \eqref{eq:Sensitivity:OCP:q-Frechet_deriv} gives \eqref{eq:Sensitivity:OCP:sensitivity_result_qoi_adjoint}.
\end{proof}
The advantage of the adjoint-based approach is that at the cost of one LQOCP solve, one can compute the QoI sensitivity 
for any $\delta \bg$ by simply applying the linear operator \eqref{eq:Sensitivity:OCP:sensitivity_result_qoi_adjoint} rather than 
solving a different LQOCP for every $\delta\bg$.
Except for the linear terms \eqref{eq:Sensitivity:OCP:ocp-solution-sensitivity-system-linear} and
\eqref{eq:Sensitivity:OCP:ocp-solution-sensitivity-system-adjoint-linear}, the LQOCPs corresponding to
the sensitivity equation  \eqref{eq:Sensitivity:OCP:ocp-solution-sensitivity-system}
and to the adjoint equation \eqref{eq:Sensitivity:OCP:ocp-solution-sensitivity-system-adjoint} are identical,
owing to the symmetry of the necessary optimality conditions.


\section{Sensitivity-Based Error Estimate for QoI} \label{sec:QoI_error}
In \cite{JRCangelosi_MHeinkenschloss_2024a}, we used the sensitivity of an ODE with respect to a component function
$\bg$ to derive an error indicator for the change in the solution or the change in a QoI depending on the solution.
These error indicators can be extended to the optimal control context, and we present the error indicator for
the change in a QoI here.
Specifically, let $\widetilde{q} : \big( \cG^3(I) \big)^{n_g} \rightarrow \real$ be the QoI \eqref{eq:intro:QoI}. 
Assume we want to compute $\widetilde{q}$  at the solution of \eqref{eq:intro:OCP} with $\bg = \bg_*$,
but instead of  $\bg_*$ we only have an approximation $\widehat{\bg}$ and therefore can only compute 
the solution of \eqref{eq:intro:OCP} with $\bg = \widehat{\bg}$. 
We may estimate the error in the QoI as
\begin{equation} \label{eq:Refinement:OCP:approx-QoI-error}
	|\widetilde{q}(\widehat{\bg}) - \widetilde{q}(\bg_*)| 
	\approx | \widetilde{q}_\bg(\widehat{\bg})(\widehat{\bg} - \bg_*) |.
\end{equation}
The sensitivity $\widetilde{q}_\bg(\widehat{\bg})$ applied to $\delta \bg = \widehat{\bg} - \bg_*$ can
be computed via the adjoint equation approach in \eqref{eq:Sensitivity:OCP:sensitivity_result_qoi_adjoint} with $\overline{\bg} = \widehat{\bg}$.
Unfortunately, we do not know $\bg_*$, so we do not know $\delta \bg = \widehat{\bg} - \bg_*$.
However, if we have bounds for the error $\widehat{\bg} - \bg_*$ and its derivatives, then we can use
\eqref{eq:Sensitivity:OCP:sensitivity_result_qoi_adjoint} to compute an upper bound for 
$ | \widetilde{q}_\bg(\widehat{\bg})(\widehat{\bg} - \bg_*) |$.

Let  $(\overline{\bx}, \overline{\bu}, \overline{\bp} ) \in  \big( W^{1, \infty}(I) \big)^{n_x} \times \big( L^\infty(I) \big)^{n_u} \times \real^{n_p}$
be the solution of \eqref{eq:intro:OCP} with $\bg = \widehat{\bg}$ and
suppose pointwise error bounds
\begin{subequations} \label{eq:Refinement:intro:error-indicator-derivatives}
\begin{align}
    	\big|  \widehat{\bg}\big(t, \overline{\bx}(t), \overline{\bu}(t), \overline{\bp} \big) 
	        -  \bg_*\big(t, \overline{\bx}(t), \overline{\bu}(t), \overline{\bp} \big) \big| 
	&  \leq \bepsilon(t), &  \aall t \in I, \\
	\left| \frac{\partial}{\partial z} \widehat{\bg}\big(t, \overline{\bx}(t), \overline{\bu}(t), \overline{\bp} \big) 
	        -  \frac{\partial}{\partial z} \bg_*\big(t, \overline{\bx}(t), \overline{\bu}(t), \overline{\bp} \big) \right| 
	&  \leq \bepsilon^z(t),  & \aall t \in I \mbox{ and } z \in \{ x, u, p \}
\end{align}
\end{subequations}
are available for the function and its first partial derivatives along the trajectory obtained from $\widehat{\bg}$.
Since $\bg$ and $\frac{\partial}{\partial z} \bg$ are vector/matrix-valued, \eqref{eq:Refinement:intro:error-indicator-derivatives}
is understood elementwise.

We can now try to find an upper bound for $ | \widetilde{q}_\bg(\widehat{\bg})(\widehat{\bg} - \bg_*) |$
by maximizing $ | \widetilde{q}_\bg(\widehat{\bg}) \delta \bg  |$ over all perturbations $\delta \bg$ that
satisfy the bounds \eref{eq:Refinement:intro:error-indicator-derivatives}.
Using \eqref{eq:Sensitivity:OCP:sensitivity_result_qoi_adjoint} with $\overline{\bg} = \widehat{\bg}$ to compute $\widetilde{q}_\bg(\widehat{\bg}) \delta \bg$,
this leads to the optimization problem
\begin{subequations} \label{eq:Refinement:OCP:OCP-QoI}
\begin{align}
	\sup_{\delta \bg} \quad &\Big| \int_{t_0}^{t_f} \Big(  \overline{\BH}_{gx}[t]  \widetilde{\delta \bx}(t) 
                                                  + \overline{\BH}_{gu}[t]  \widetilde{\delta \bu}(t)
                                                  + \overline{\BH}_{gp}[t]  \widetilde{\delta \bp}
                                                  + \overline{\bff}_{g}[t]^T \widetilde{\delta \blambda}(t) 
                                                  +   \nabla_g \overline{\ell}[t]  \Big)^T  \delta \bg[t]   \, dt	\nonumber      \\ 
            &   \qquad +  \int_{t_0}^{t_f}  \Big(  \overline{\bd}[t]^T \delta \bg_x[t] \widetilde{\delta \bx}(t) \Big)  
                                             +  \Big(  \overline{\bd}[t]^T \delta \bg_u[t] \widetilde{\delta \bu}(t) \Big) 
                                             +  \Big(  \overline{\bd}[t]^T \delta \bg_p[t] \widetilde{\delta \bp} \Big)  \, dt \Big| \\
	\mbox{s.t.} \quad 
	& - \bepsilon(t) \leq \delta \bg\big(t, \overline{\bx}(t), \overline{\bu}(t), \overline{\bp} \big)  \leq \bepsilon(t), \qquad \aall t \in I, \\
	& - \bepsilon^x(t) \leq \delta \bg_x\big(t, \overline{\bx}(t), \overline{\bu}(t), \overline{\bp} \big)  \leq \bepsilon^x(t), \qquad \aall t \in I, \\
	& - \bepsilon^u(t) \leq \delta \bg_u\big(t, \overline{\bx}(t), \overline{\bu}(t), \overline{\bp} \big)  \leq \bepsilon^u(t), \qquad \aall t \in I, \\
	& - \bepsilon^p(t) \leq \delta \bg_p\big(t, \overline{\bx}(t), \overline{\bu}(t), \overline{\bp} \big)  \leq \bepsilon^p(t), \qquad \aall t \in I,
\end{align}
\end{subequations}
where the box constraints are understood elementwise.
Due to the presence of both $\delta \bg$ and its derivatives, it is unclear whether \eref{eq:Refinement:OCP:OCP-QoI} has a solution, let alone an easily computable one. Fortunately, this problem can be relaxed to obtain a problem with a simple analytical solution by replacing 
$\delta \bg\big(t, \overline{\bx}(t), \overline{\bu}(t), \overline{\bp} \big)$ and
$\delta \bg_x\big(t, \overline{\bx}(t), \overline{\bu}(t), \overline{\bp} \big)$,
$\delta \bg_u\big(t, \overline{\bx}(t), \overline{\bu}(t), \overline{\bp} \big)$,
$\delta \bg_p\big(t, \overline{\bx}(t), \overline{\bu}(t), \overline{\bp} \big)$
by
\[
	\bdelta \in \LL^{n_g}, \qquad
	\bdelta^x \in \LL^{n_g \times n_x}, \qquad 
	\bdelta^u \in \LL^{n_g \times n_u}, \qquad \
	\bdelta^p \in \LL^{n_g \times n_p}
\]
respectively to obtain 
\begin{subequations}   \label{eq:Refinement:OCP:OCP-QoI-delta}
\begin{align} 
	\max_{\bdelta, \bdelta^x, \bdelta^u, \bdelta^p} \quad &\Big| \int_{t_0}^{t_f} \!\! \Big(  \overline{\BH}_{gx}[t]  \widetilde{\delta \bx}(t) 
                                                  + \overline{\BH}_{gu}[t]  \widetilde{\delta \bu}(t)
                                                  + \overline{\BH}_{gp}[t]  \widetilde{\delta \bp}
                                                  + \overline{\bff}_{g}[t]^T \widetilde{\delta \blambda}(t) 
                                                  +   \nabla_g \overline{\ell}[t]  \Big)^T  \bdelta(t)   \, dt	    \nonumber \\ 
            &   \qquad +  \int_{t_0}^{t_f}  \Big(  \overline{\bd}[t]^T \bdelta^x(t) \widetilde{\delta \bx}(t) \Big)  
                                             +  \Big(  \overline{\bd}[t]^T \bdelta^u(t) \widetilde{\delta \bu}(t) \Big) 
                                             +  \Big(  \overline{\bd}[t]^T \bdelta^p(t) \widetilde{\delta \bp} \Big)  \, dt \Big|  \\ 
	\mbox{s.t.} \quad 
	& - \bepsilon(t) \leq \bdelta(t) \leq \bepsilon(t), \qquad \aall t \in I, \\
	& - \bepsilon^x(t) \leq \bdelta^x(t) \leq \bepsilon^x(t), \qquad \aall t \in I, \\
	& - \bepsilon^u(t) \leq \bdelta^u(t) \leq \bepsilon^u(t), \qquad \aall t \in I, \\
	& - \bepsilon^p(t) \leq \bdelta^p(t) \leq \bepsilon^p(t), \qquad \aall t \in I.
\end{align}
\end{subequations}

Note that $\bdelta^x$, $\bdelta^u$, $\bdelta^p$ in \eqref{eq:Refinement:OCP:OCP-QoI-delta}
are not derivatives of $\bdelta$; they are independent functions, which is why they are indicated by 
superscripts instead of subscripts. The same is true for $\bepsilon^x$, $\bepsilon^u$, $\bepsilon^p$. 

The optimization problem \eref{eq:Refinement:OCP:OCP-QoI-delta} is an infinite dimensional  linear program in 
the variables $\bdelta$,  $\bdelta^x$, $\bdelta^u$, $\bdelta^p$ and has  an analytical solution.

\begin{theorem} \label{thm:Refinement:OCP:OCP-acquisition-function}
     If  the assumptions of \cref{thm:Sensitivity:OCP:sensitivity_result_qoi-adjoint} hold
     and $\bepsilon$, $\bepsilon^x$, $\bepsilon^u$, $\bepsilon^p$ are essentially bounded, 
     then a solution of  \eqref{eq:Refinement:OCP:OCP-QoI-delta}  is given by 
     \begin{align*}
	\bdelta_i(t) &= \mathrm{sgn}\big(  \overline{\BH}_{gx}[t]  \widetilde{\delta \bx}(t) 
                                                  + \overline{\BH}_{gu}[t]  \widetilde{\delta \bu}(t)
                                                  + \overline{\BH}_{gp}[t]  \widetilde{\delta \bp}
                                                  + \overline{\bff}_{g}[t]^T \widetilde{\delta \blambda}(t) 
                                                  +   \nabla_g \overline{\ell}[t]  \big)_i  \bepsilon_i(t), \\
	\bdelta_{ij}^x(t) &= \mathrm{sgn}\big(\overline{\bd}_i[t] \widetilde{\delta \bx}_j(t)\big) \, \bepsilon_{ij}^x(t), \qquad  j = 1, \dots, n_x,  \\
	\bdelta_{ij}^u(t) &= \mathrm{sgn}\big(\overline{\bd}_i[t] \widetilde{\delta \bu}_j(t)\big) \, \bepsilon_{ij}^u(t), \qquad  j = 1, \dots, n_u,  \\
	\bdelta_{ij}^p(t) &= \mathrm{sgn}\big(\overline{\bd}_i[t] \widetilde{\delta \bp}_j\big) \, \bepsilon_{ij}^p(t), \quad\qquad j = 1, \dots, n_p 
    \end{align*}
    for $ i = 1, \dots, n_g$, with the corresponding objective value
    \begin{align*}
    	&\int_{t_0}^{t_f} \Big|  \overline{\BH}_{gx}[t]  \widetilde{\delta \bx}(t) 
                                                      + \overline{\BH}_{gu}[t]  \widetilde{\delta \bu}(t)
                                                      + \overline{\BH}_{gp}[t]  \widetilde{\delta \bp}
                                                      + \overline{\bff}_{g}[t]^T \widetilde{\delta \blambda}(t) 
                                                      +   \nabla_g \overline{\ell}[t]  \Big|^T  \bepsilon(t)   \, dt	    \nonumber \\ 
                &   \quad +  \int_{t_0}^{t_f}  \big| \overline{\bd}[t] \big|^T \bepsilon^x(t) \, \big| \widetilde{\delta \bx}(t) \big|
                                                 +  \big|  \overline{\bd}[t] \big|^T \bepsilon^u(t) \, \big| \widetilde{\delta \bu}(t) \big| +  \big|  \overline{\bd}[t] \big|^T \bepsilon^p(t) \, \big| \widetilde{\delta \bp} \big|  \, dt,
    \end{align*}
    where the absolute value is applied componentwise.
\end{theorem}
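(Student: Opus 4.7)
The plan is to reduce \eqref{eq:Refinement:OCP:OCP-QoI-delta} to a pointwise, componentwise scalar problem. First, I would observe that the feasible set is a symmetric box: if $(\bdelta, \bdelta^x, \bdelta^u, \bdelta^p)$ is feasible, so is its negative. The quantity inside the absolute value is a bounded linear functional of $(\bdelta, \bdelta^x, \bdelta^u, \bdelta^p)$, so its values on $v$ and $-v$ differ only by sign. Hence the supremum of the absolute value equals the supremum of the functional itself, and I can drop the outer $|\cdot|$.

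Next, I would split the (now signed) objective into its four additive pieces and notice that each piece is a linear functional whose coefficient depends on $t$ only through the solution $(\widetilde{\delta \bx}, \widetilde{\delta \bu}, \widetilde{\delta \bp}, \widetilde{\delta \blambda})\in\cZ^\infty$ of the adjoint LQOCP, together with the essentially bounded quantities in \eqref{eq:Sensitivity:OCP:LQOCP_components}. Introducing the abbreviation
\[
	\bc(t) := \overline{\BH}_{gx}[t]\widetilde{\delta \bx}(t) + \overline{\BH}_{gu}[t]\widetilde{\delta \bu}(t) + \overline{\BH}_{gp}[t]\widetilde{\delta \bp} + \overline{\bff}_g[t]^T\widetilde{\delta \blambda}(t) + \nabla_g \overline{\ell}[t],
\]
the first piece becomes $\sum_i \int_{t_0}^{t_f} \bc_i(t)\,\bdelta_i(t)\,dt$, while the remaining three decompose componentwise as, e.g., $\sum_{i,j}\int_{t_0}^{t_f}\overline{\bd}_i[t]\,\widetilde{\delta \bx}_j(t)\,\bdelta_{ij}^x(t)\,dt$, and analogously for $\bdelta^u$, $\bdelta^p$. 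Each summand is pointwise separable in $t$, since the constraints are pointwise box constraints on the same index.

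The elementary pointwise scalar problem $\max\{c\,v : |v|\le\epsilon\}$ has solution $v=\mathrm{sgn}(c)\,\epsilon$ with optimal value $|c|\epsilon$. Applying this pointwise and componentwise gives exactly the claimed formulas and objective value. The candidate functions are measurable since they are sign functions of measurable arguments multiplied by essentially bounded functions (any convention on the sign of zero is harmless because such $t$ contribute nothing to the integral), and they are essentially bounded by hypothesis on $\bepsilon,\bepsilon^x,\bepsilon^u,\bepsilon^p$, so they lie in the requisite $L^\infty$ spaces. Upper-bounding each summand pointwise by $|\bc_i(t)|\bepsilon_i(t)$ and $|\overline{\bd}_i[t]|\bepsilon^x_{ij}(t)|\widetilde{\delta \bx}_j(t)|$ (and the $u$, $p$ analogues) and integrating shows that the proposed value is indeed an upper bound and is attained, confirming optimality.

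The main obstacle is not analytical depth but rather care with the reduction: one must verify that dropping the absolute value is justified by the symmetry of the feasible set, and that pointwise maximization of measurable integrands produces a measurable selector whose essential boundedness is inherited directly from the $\bepsilon$'s. Once these routine measurability and feasibility checks are in place, the explicit sign-of-coefficient formulas follow without further calculation.
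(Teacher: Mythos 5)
Your argument is correct and is the same pointwise separation argument that the paper (by reference to \cite[Thm.~3.6]{JRCangelosi_MHeinkenschloss_2024a}) invokes: the feasible set is a symmetric box, so the outer absolute value may be dropped; the linear functional decomposes into a sum of pointwise, componentwise scalar linear programs $\max\{cv:|v|\le\varepsilon\}$; and the measurable selector obtained from the sign formulas is essentially bounded because the $\bepsilon$'s are. Your care with measurability of the sign functions and the observation that the choice of $\mathrm{sgn}(0)$ is harmless on a null set are the same routine checks the paper leaves implicit, so the proof matches the paper's intended route.
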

\begin{proof}
    The proof is analogous to the proof of   \cite[Thm.~3.6]{JRCangelosi_MHeinkenschloss_2024a}.
\end{proof}

Because of the error bound \eqref{eq:Refinement:intro:error-indicator-derivatives} and the fact that
 \eqref{eq:Refinement:OCP:OCP-QoI-delta} is a relaxation of \eqref{eq:Refinement:OCP:OCP-QoI},
\cref{thm:Refinement:OCP:OCP-acquisition-function} implies the following result.

\begin{theorem} \label{thm:Refinement:OCP:OCP-acquisition-function-bound}
     If  the assumptions of \cref{thm:Sensitivity:OCP:sensitivity_result_qoi-adjoint} hold,
     then the approximate QoI error measure \eqref{eq:Refinement:OCP:approx-QoI-error} satisfies the bound
	\begin{align*}
		&| \widetilde{q}_\bg(\overline{\bg})(\overline{\bg} - \bg_*) | \\ &\leq \int_{t_0}^{t_f} \Big|  \overline{\BH}_{gx}[t]  \widetilde{\delta \bx}(t) 
                                                 \! + \! \overline{\BH}_{gu}[t]  \widetilde{\delta \bu}(t)
                                                  \! + \! \overline{\BH}_{gp}[t]  \widetilde{\delta \bp}
                                                  \! + \! \overline{\bff}_{g}[t]^T \widetilde{\delta \blambda}(t) 
                                                  \! + \!   \nabla_g \overline{\ell}[t]  \Big|^T  \bepsilon(t)   \, dt	    \nonumber \\ 
            &   \quad +  \int_{t_0}^{t_f}  \big| \overline{\bd}[t] \big|^T \bepsilon^x(t) \, \big| \widetilde{\delta \bx}(t) \big|
                                             +  \big|  \overline{\bd}[t] \big|^T \bepsilon^u(t) \, \big| \widetilde{\delta \bu}(t) \big| +  \big|  \overline{\bd}[t] \big|^T \bepsilon^p(t) \, \big| \widetilde{\delta \bp} \big|  \, dt.
	\end{align*}
\end{theorem}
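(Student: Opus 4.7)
The plan is to string together three pieces already in hand: the adjoint-based representation \eqref{eq:Sensitivity:OCP:sensitivity_result_qoi_adjoint} of the QoI sensitivity, the feasibility of $\widehat{\bg} - \bg_*$ (together with its partial derivatives) in the pointwise-constrained problem \eqref{eq:Refinement:OCP:OCP-QoI}, and the closed-form maximum from \cref{thm:Refinement:OCP:OCP-acquisition-function} for its relaxation \eqref{eq:Refinement:OCP:OCP-QoI-delta}.

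First, I would apply \cref{thm:Sensitivity:OCP:sensitivity_result_qoi-adjoint} at $\overline{\bg} = \widehat{\bg}$ with the specific direction $\delta\bg = \widehat{\bg} - \bg_*$. This expresses $\widetilde{q}_\bg(\widehat{\bg})(\widehat{\bg} - \bg_*)$ as the sum of integrals appearing inside the absolute value in the objective of \eqref{eq:Refinement:OCP:OCP-QoI}, evaluated along the fixed trajectory $(\overline{\bx},\overline{\bu},\overline{\bp})$ obtained from $\widehat{\bg}$ and with the associated adjoint quantities $(\widetilde{\delta\bx},\widetilde{\delta\bu},\widetilde{\delta\bp},\widetilde{\delta\blambda})$ from \eqref{eq:Sensitivity:OCP:ocp-solution-sensitivity-system-adjoint}. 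Taking absolute values, $|\widetilde{q}_\bg(\widehat{\bg})(\widehat{\bg} - \bg_*)|$ equals the value of that objective at the choice $\delta\bg = \widehat{\bg} - \bg_*$.

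Next, the hypothesis \eqref{eq:Refinement:intro:error-indicator-derivatives} says precisely that $\delta\bg = \widehat{\bg} - \bg_*$ evaluated along the trajectory satisfies the elementwise box constraints (b)--(e) of \eqref{eq:Refinement:OCP:OCP-QoI}, so this choice is feasible and the absolute value is bounded by the supremum in \eqref{eq:Refinement:OCP:OCP-QoI}. Moreover, \eqref{eq:Refinement:OCP:OCP-QoI-delta} is a strict relaxation of \eqref{eq:Refinement:OCP:OCP-QoI}: in the relaxed problem, $\bdelta$, $\bdelta^x$, $\bdelta^u$, $\bdelta^p$ are independent $L^\infty$ functions that are no longer required to arise as a single $\delta\bg$ and its $x$, $u$, $p$ partial derivatives; the box constraints coincide, so every feasible point of \eqref{eq:Refinement:OCP:OCP-QoI} induces a feasible quadruple for \eqref{eq:Refinement:OCP:OCP-QoI-delta} with the same objective value. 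Hence $\sup \eqref{eq:Refinement:OCP:OCP-QoI} \le \max \eqref{eq:Refinement:OCP:OCP-QoI-delta}$.

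Finally, \cref{thm:Refinement:OCP:OCP-acquisition-function} provides the explicit maximizer of \eqref{eq:Refinement:OCP:OCP-QoI-delta} and the corresponding optimal value, which is exactly the right-hand side of the claimed bound. Chaining these three inequalities yields the theorem. There is no real obstacle here; the only point requiring a line of care is the relaxation step, where I would emphasize that replacing the derivatives of a single function by three unrelated $L^\infty$ functions can only enlarge the feasible set, so the claimed inequality is one-sided in the correct direction. Since the proof is essentially the same adjoint-plus-relaxation argument as in \cite[Thm.~3.6]{JRCangelosi_MHeinkenschloss_2024a}, it can be stated briefly by referring to that template and to \cref{thm:Refinement:OCP:OCP-acquisition-function}.
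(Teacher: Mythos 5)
Your proof is correct and follows exactly the route the paper indicates in the sentence preceding the theorem: feasibility of $\delta\bg = \widehat{\bg} - \bg_*$ in \eqref{eq:Refinement:OCP:OCP-QoI} via the pointwise bounds \eqref{eq:Refinement:intro:error-indicator-derivatives}, relaxation to \eqref{eq:Refinement:OCP:OCP-QoI-delta}, and invocation of the closed-form maximum from \cref{thm:Refinement:OCP:OCP-acquisition-function}. The only minor point is that you implicitly also need the essential boundedness of $\bepsilon, \bepsilon^x, \bepsilon^u, \bepsilon^p$ assumed in \cref{thm:Refinement:OCP:OCP-acquisition-function}, which the theorem statement omits but which is clearly intended.
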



\section{Numerical Results} \label{sec:numerics}

The sensitivity results of \cref{thm:Sensitivity:OCP:OCP-solution-sensitivity} are used to predict the ability of a feedback controller 
to track a reference trajectory for a notional hypersonic vehicle in longitudinal flight. See \cref{fig:Refinement:ODE:dynamic_model} for a visual depiction of the dynamic model.
\begin{figure}[!htb]
\centering
	\begin{tikzpicture}[scale = 0.74]
	    \draw[thick, rotate=36, scale=1] (-2, -0.2) -- (0, 0) -- (-2, 0.2) -- cycle;
	    \draw[thick, dashed, color=olive, rotate=36, scale=2] (-1.3, 0) -- (1.8, 0) node[anchor=west]{chord line};
	    \draw[thick, dashed, color=red, rotate=0, scale=2] (0, 0) -- (1.75, 0) node[anchor=west]{horizon};
	    \draw[thick, color=blue, rotate=18, scale=2] (0, 0) -- (1.5, 0) node[anchor=west]{$v$};
	    \draw[thick, color=blue, rotate=18, scale=1] (0, 0) -- (-2, 0) node[anchor=east]{$D$};
	    \draw[thick, color=magenta, rotate=18, scale=1.2] (0, 0) -- (0, 1.6) node[anchor=south]{$L$};
	    \draw[thick, color=purple, rotate=18, scale=1.2] (0, 0.2) -- (-0.2, 0.2) -- (-0.2, 0);
	    \node[color=teal] at (1.3, 0.65) {$\alpha$};
	    \draw [thick, color=teal, domain=18:36] plot ({1.2 * cos(\x)}, {1.2 * sin(\x)});
	    \node[color=violet] at (1.3, 0.18) {$\gamma$};
	    \draw [thick, color=violet, domain=0:18] plot ({1 * cos(\x)}, {1 * sin(\x)});
	    \draw[scale=2] (-1.8, -1) -- (-1.8, 1) node[anchor=south]{$x_2$};
	    \draw[scale=2] (-1.8, -1) -- (2.5, -1) node[anchor=west]{$x_1$};
	    \draw[thick, color=magenta, rotate=36, scale=2] (-0.9, 0.0) -- (-1.3, 0.15);
	    \node[color=teal] at (-2.4, -1.5) {$\delta$};
	    \draw [thick, color=teal, domain=210:216] plot ({2.5 * cos(\x)}, {2.5 * sin(\x)});
	\end{tikzpicture}
\caption{Dynamic model for a hypersonic vehicle with control via flap deflection.} \label{fig:Refinement:ODE:dynamic_model}
\end{figure}
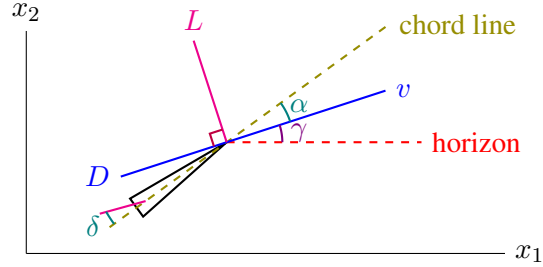
The states are downrange $x_1$ [m], altitude $x_2$  [m], speed $v$  [m/s], flight path angle $\gamma$  [rad], 
angle of attack $\alpha$ [rad],  and pitch rate $q$ [rad/s], i.e.,
in this example,
\[
       \bx(t) =  \big( \bx_1(t), \bx_2(t), \bv(t), \bgamma(t), \balpha(t), \bq(t) \big).
\]
The control surface is an elevator (or flap) that makes an angle $\delta$ [rad] with the chord line of the vehicle:
\[
	\bu(t) = \bdelta(t).
\]
Note that this $\bdelta$ is a physical quantity and is not related to the
$\bdelta$ in \eqref{eq:Refinement:OCP:OCP-QoI-delta}.

The flight duration $T$ [s] is also included as an optimization variable:
\[
	\bp = T.
\]

Formulas for derived quantities and other equations involving states are expressed in base units (kg/m/s) unless otherwise specified.
However, numerical results are reported in (kg/km/s), and using the (kg/km/s) system led to better numerical performance of the 
optimization algorithm. Angles appearing in formulas are in radians, but reported in degrees for figures.

The hypersonic vehicle considered in this example has mass $m = 1000 \textrm{ kg}$, moment of inertia $I_z = 247 \textrm{ kg $\times$ m$^2$}$ about the pitch axis, reference area $A_w = 4.4 \textrm{ m$^2$}$, and reference length $L_w = 3.6 \textrm{ m}$.

Lift, drag, and vehicle moment about the pitch axis are given by
   		\begin{align*}
			L(x_2, v, \alpha, \delta) &= \overline{q}(x_2, v) C_L(\alpha, \delta) A_w  &\textrm{[N]}, \\
			D(x_2, v, \alpha, \delta) &= \overline{q}(x_2, v) C_D(\alpha, \delta) A_w &\textrm{[N]}, \\
			M(x_2, v, \alpha, \delta) &= \overline{q}(x_2, v) C_M(\alpha, \delta) A_w L_w &\textrm{[N $\times$ m]}
		\end{align*}
where
\begin{equation} \label{eq:Refinement:ODE:DP}
	\overline{q}(x_2, v) = \frac{1}{2} \rho(x_2) v^2 \qquad \textrm{[Pa]}
\end{equation}
is the dynamic pressure, which depends on atmospheric density approximated by
\[
	\rho(x_2) = 1.225 \, \mathrm{exp}(-0.00014x_2) \qquad \textrm{[kg/m$^3$]}.
\]

The lift, drag, and moment coefficients $C_L, C_D, C_M$ for the vehicle are assumed to depend on angle of attack and flap deflection angle. They will play the role of the model function in this example, i.e.,
\[
      \bg\big(t, \bx(t), \bu(t), \bp\big) 
      =  \begin{pmatrix}  
            C_L\big(\balpha(t), \bdelta(t)\big)  \\  
            C_D\big(\balpha(t), \bdelta(t) \big) \\ 
            C_M\big(\balpha(t), \bdelta(t) \big) 
        \end{pmatrix}.
\]
In this example, the approximate lift, drag, and moment coefficients,
which compose $\widehat{\bg}$, are polynomial models given by
\begin{equation} \label{eq:Refinement:ODE:approximate-model}
\begin{aligned}
	\widehat{C_L}(\alpha, \delta) &= -0.04 + 0.8 \alpha + 0.13 \delta, \\
	\widehat{C_D}(\alpha, \delta) &= 0.012 - 0.01 \alpha + 0.6 \alpha^2 - 0.02 \delta + 0.12 \delta^2, \\
	\widehat{C_M}(\alpha, \delta) &= 0.1745 - \alpha - \delta.
	\end{aligned}
	\end{equation}
	and the true model $\bg_*$ is given by
\begin{equation} \label{eq:Refinement:ODE:true-model}
\begin{aligned}
	C_L^*(\alpha, \delta) = (1 + \epsilon) \widehat{C_L}(\alpha, \delta), &&
	C_D^*(\alpha, \delta) = (1 - \epsilon) \widehat{C_D}(\alpha, \delta), &&
	C_M^*(\alpha, \delta) = (1 + \epsilon) \widehat{C_M}(\alpha, \delta)
	\end{aligned}
	\end{equation}
	with $\epsilon = 0.05$. In later experiments we will vary $\epsilon$, changing the true model. The reason for this choice as opposed to varying the surrogate model is to ensure the reference trajectory, which is computed with $\widehat{\bg}$, remains the same. This choice also implies that $\widetilde{q}_\bg(\widehat{\bg}) (\bg_* - \widehat{\bg})$ varies linearly with $\epsilon$, as $\widehat{\bg}$ is fixed and $\bg_* - \widehat{\bg}$ depends linearly on $\epsilon$.

The dynamics of the hypersonic vehicle also depend on gravitational acceleration, computed as
\[
	g(x_2) = \mu \ / \ (R_E + x_2)^2
\]
where $\mu = 3.986 \times 10^{14} \textrm{ m$^3$/s$^2$}$ is the standard gravitational parameter and $R_E \approx 6.371 \times 10^6 \textrm{ m}$ is the radius of Earth.

The dynamics of the hypersonic vehicle are given by 
\begin{equation} \label{eq:Refinement:ODE:hypersonic_IVP}
\begin{aligned}
    \bx_1'(t) &= \bv(t) \cos \bgamma(t), \\
    \bx_2'(t) &= \bv(t) \sin \bgamma(t), \\
    \bv'(t) &= -\frac{1}{m} \Big( D\big(\bx_2(t), \bv(t), \balpha(t), \bdelta(t) \big) + m g\big(\bx_2(t)\big) \sin \bgamma(t) \big), \\
    \bgamma'(t) &= \frac{1}{m \bv(t)} \Big( L \big(\bx_2(t), \bv(t), \balpha(t), \bdelta(t) \big) - mg \big(\bx_2(t) \big) \cos \bgamma(t) + \frac{m \bv(t)^2 \cos \bgamma(t)}{R_E + \bx_2(t)} \Big), \\
    \balpha'(t) &= \bq(t) - \bgamma'(t) \\
    \bq'(t) &= M \big(\bx_2(t), \bv(t), \balpha(t), \bdelta(t) \big) \; / \; I_z, 
\end{aligned}
\end{equation}
for $\aall t \in (0, T)$, 
with initial conditions
\begin{equation} \label{eq:Refinement:ODE:initial-conditions}
 	\bx_1(0) = 0, \quad\! \bx_2(0) = 80000, \quad\! \bv(0) = 5000, \quad\! \bgamma(0) = -5 \pi / 180, \quad\!
	\balpha(0) = 11 \pi / 180, \quad\! \bq(0) = 0
\end{equation}
and box constraints
\begin{equation} \label{eq:numerics:box-constraints}
\begin{aligned}
 	0 \leq \bx_2(t) &\leq 81000, & 1 \leq \bv(t) &\leq 6000, &  t \in [0, T], \\
 	-30 \pi / 180 \leq \bgamma(t) &\leq 30 \pi / 180, &  0 \leq \balpha(t) &\leq 20 \pi / 180, &  t \in [0, T], \\
 	-20 \pi / 180 \leq \bdelta(t) &\leq 20 \pi / 180, &  \aall t \in (0, T), \qquad  1000 \leq T &\leq 3000.
\end{aligned}
\end{equation}

First, the maximum downrange problem
\begin{equation} \label{eq:numerics:maximum-downrange}
\begin{aligned}
	\max_{\bdelta, T} \quad & \bx_1(T) \\
	\mbox{s.t.} \quad & \mbox{ dynamics \eqref{eq:Refinement:ODE:hypersonic_IVP}}, 
	  \mbox{ initial conditions \eqref{eq:Refinement:ODE:initial-conditions}}, 
	  \mbox{ box constraints \eqref{eq:numerics:box-constraints}},
\end{aligned}
\end{equation}
was solved with $\bg = \widehat{\bg}$ to obtain a reference trajectory $(\overline{\bx}, \overline{\bu}, \overline{\bp})$. 
The problem has a variable duration $T$, so a fixed-time optimal control problem was obtained by 
converting the problem to normalized time, e.g., the first dynamic equation in \eqref{eq:Refinement:ODE:hypersonic_IVP}
 was replaced by
\begin{align*}
    \frac{d}{d\tau} \bx_1(T \tau) &= T \bv(T \tau) \cos \bgamma(T \tau), & \aall \tau \in (0, 1),
\end{align*}
with similar adjustments for the other constraints.

Given a reference trajectory $(\overline{\bx}, \overline{\bu}, \overline{\bp})$, we then consider the reference tracking problem
\begin{equation} \label{eq:Refinement:OCP:reference-tracking-OCP}
\begin{aligned}
	\min_{\delta, T} \quad & \int_0^1 \big( \bx(T \tau) - \overline{\bx}(\overline{T} \tau) \big)^T \BQ \big( \bx(T \tau) - \overline{\bx}(\overline{T} \tau) \big) \\ 
	&\quad + \big( \bu(T \tau) - \overline{\bu}(\overline{T} \tau) \big)^T \BR_u \big( \bu(T \tau) - \overline{\bu}(\overline{T} \tau) \big) \, dt 
	  + (\bp - \overline{\bp})^T \BR_p (\bp - \overline{\bp}) \\
	\mbox{s.t.} \quad & \mbox{dynamics \eqref{eq:Refinement:ODE:hypersonic_IVP}}
	                          \mbox{ and initial conditions \eqref{eq:Refinement:ODE:initial-conditions}}, 
\end{aligned}
\end{equation}
with weight matrices
\[
	\BQ := \mathrm{diag}(10^{-3}, 10^1, 0, 0, 10^1, 0), \qquad \BR_u := 10^8, \qquad \BR_p := 10^{-3}.
\]
This problem has the form \eqref{eq:intro:OCP} for sensitivity analysis, and by construction the optimal solution is the reference trajectory $(\bdelta, T) = (\overline{\bdelta}, \overline{T})$ when using the same model $\bg$ with which \eqref{eq:numerics:maximum-downrange} was solved, i.e., $\bg = \widehat{\bg}$. The optimal objective value of \eqref{eq:Refinement:OCP:reference-tracking-OCP} with $\bg = \bg_*$ represents the ability of a controller to track the reference trajectory computed with the approximation $\widehat{\bg}$ when the dynamical system is in fact subject to the true model $\bg_*$. The larger the objective value, the more difficult it is to track the reference trajectory, which could be an indication that $\widehat{\bg}$ is not a good enough approximation of $\bg_*$.

The sensitivity-based prediction of the solution error is given by
\[
	\delta \bx := \bx_\bg(\widehat{\bg}) (\bg_* - \widehat{\bg}) \approx \bx(\cdot \, ; \bg_*) - \bx(\cdot \, ; \widehat{\bg})
\]
where the sensitivity $\bx_\bg(\widehat{\bg}) \delta \bg$ is computed for $\delta \bg = \bg_* - \widehat{\bg}$ using \cref{thm:Sensitivity:OCP:OCP-solution-sensitivity}.
\begin{figure}[!htbp]
    \centering
    \includegraphics[width=0.48\textwidth]{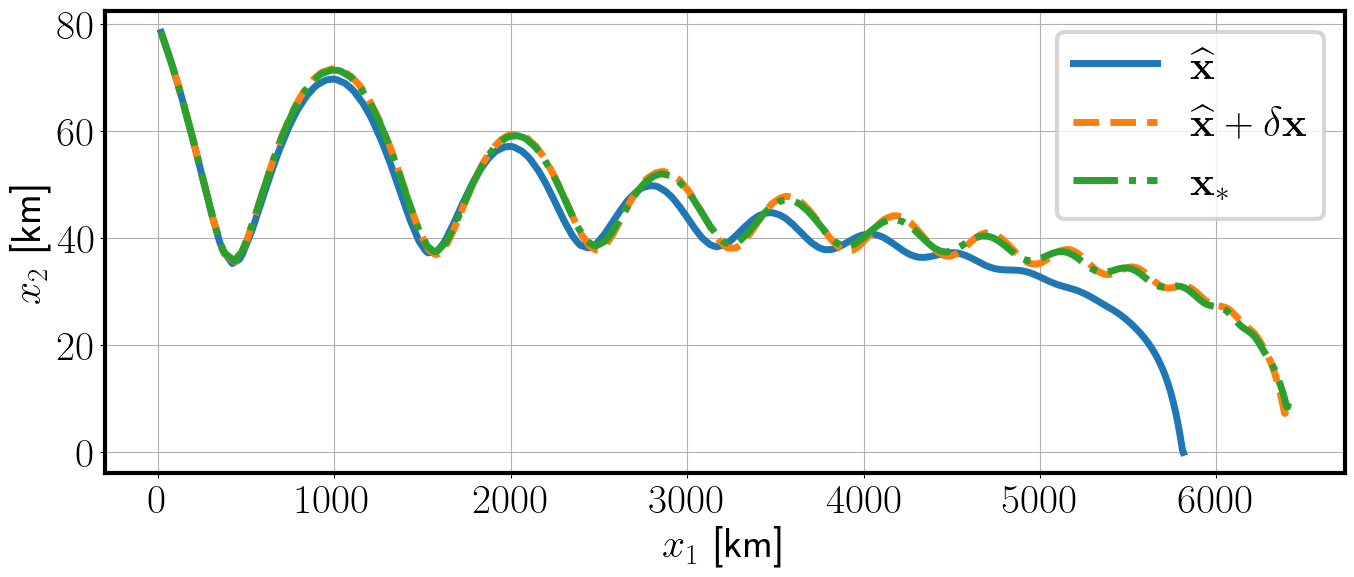}
    \includegraphics[width=0.48\textwidth]{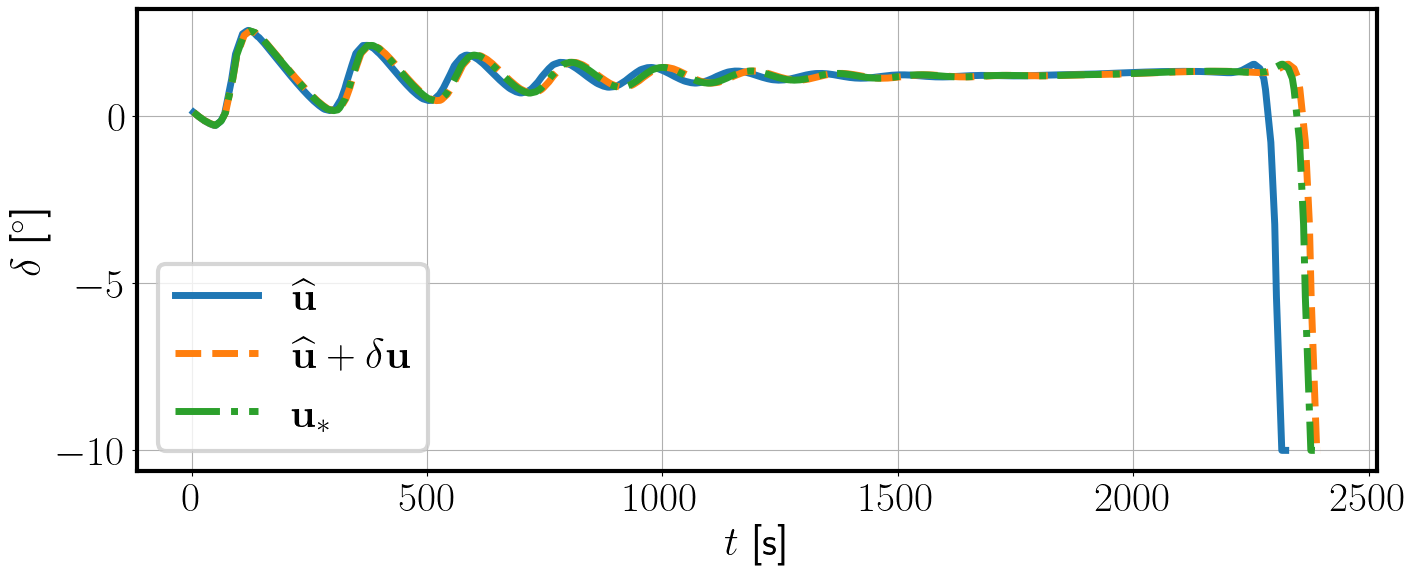}
    
    \vspace*{-1ex}
    \caption{Reference trajectory (blue), sensitivity-based prediction of optimal tracking trajectory (orange), and actual optimal tracking trajectory (green).}
    \label{fig:numerics:sensitivity-compare}
\end{figure}
\cref{fig:numerics:sensitivity-compare} shows three trajectories (just the states $x_1$ and $x_2$): namely, the reference trajectory $\bx(\cdot \, ; \widehat{\bg})$, the true optimal tracking trajectory $\bx(\cdot \, ; \bg_*)$, and $\bx(\cdot \, ; \widehat{\bg}) + \delta \bx$, which approximates $\bx(\cdot \, ; \bg_*)$. The plot shows that the sensitivity computations are reliable even for a fairly large $5\%$ relative error in the aerodynamic coefficient models. Figure~\ref{fig:numerics:sensitivity-compare} also shows similar results (with analogous notation) for the control input $\bu = \bdelta$. All optimal control problems were solved numerically using a flipped Legendre-Gauss-Radau collocation-based discretization of the dynamics and a compatible quadrature rule for the computation of integral terms in the objective function; see \cite{SKameswaran_LTBiegler_2008a}, \cite[Ch.~4]{JRCangelosi_2022a} for more details.

Next, several values of $\epsilon$ were considered in \eref{eq:Refinement:ODE:true-model}. As discussed earlier, we vary the true model $\bg_*$ instead of the approximation $\widehat{\bg}$ to ensure the reference trajectory remains the same. For each $\epsilon$, we solve \eref{eq:Refinement:OCP:reference-tracking-OCP} with the corresponding $\bg = \bg_*$ to examine the impact of $\epsilon$ on the downrange 
\[
	\widetilde{q}(\bg) := \bx_1(T; \bg).
\]
\begin{figure}[!htbp]
    \centering
    \includegraphics[width=0.50\textwidth]{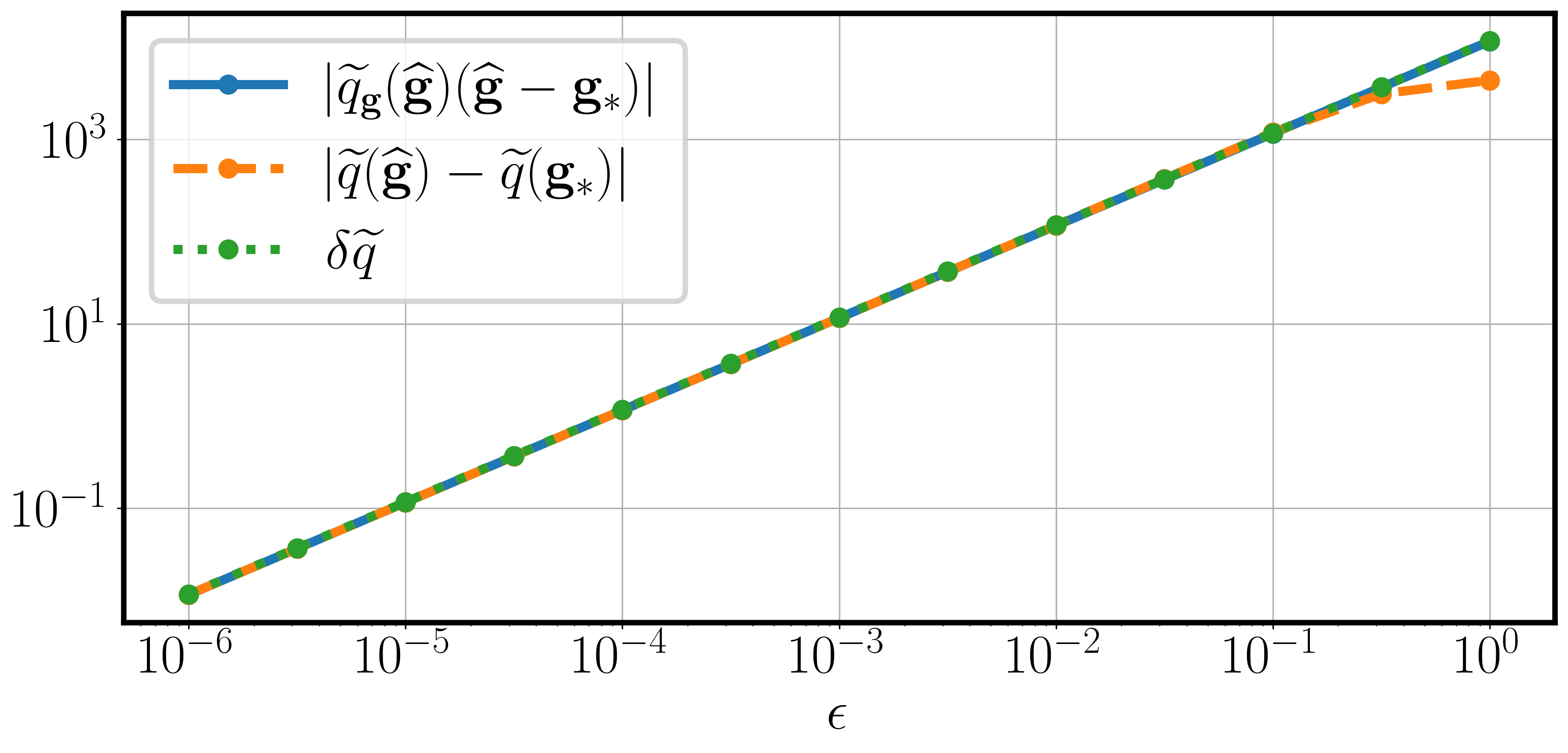}
    
    \vspace*{-1ex}
    \caption{Strong agreement between the QoI error estimates (blue), QoI errors (orange), and error bounds (green) for reference tracking OCP for a range of perturbations $\epsilon$.}
    \label{fig:numerics:QoI-study}
\end{figure}
\cref{fig:numerics:QoI-study} shows for each value of $\epsilon$ the sensitivity-based QoI error estimate $| \widetilde{q}(\widehat{\bg})(\widehat{\bg} - \bg_*) |$ of \cref{thm:Sensitivity:OCP:sensitivity_result_qoi-adjoint}, the true QoI error $| \widetilde{q}(\widehat{\bg}) - \widetilde{q}(\bg_*) |$, and the sensitivity-based approximate upper bound $\delta \widetilde{q}$ of \cref{thm:Refinement:OCP:OCP-acquisition-function-bound} using the pointwise error bounds
\begin{align*}
	\bepsilon(t) &= \big|  \widehat{\bg}\big(t, \overline{\bx}(t), \overline{\bu}(t), \overline{\bp} \big) 
	        -  \bg_*\big(t, \overline{\bx}(t), \overline{\bu}(t), \overline{\bp} \big) \big|, &  \aall t \in I, \\
	\bepsilon^z(t) &= \left| \frac{\partial}{\partial z} \widehat{\bg}\big(t, \overline{\bx}(t), \overline{\bu}(t), \overline{\bp} \big) 
	        -  \frac{\partial}{\partial z} \bg_*\big(t, \overline{\bx}(t), \overline{\bu}(t), \overline{\bp} \big) \right|,  & \aall t \in I \mbox{ and } z \in \{ x, u, p \},
\end{align*}
i.e., \eref{eq:Refinement:intro:error-indicator-derivatives} is satisfied with equality. The sensitivity-based QoI error estimate accurately predicts the actual QoI error for a wide range of $\epsilon$ values, and the sensitivity-based bound of \cref{thm:Refinement:OCP:OCP-acquisition-function-bound} is relatively tight. This gives some numerical evidence that the relaxation from \eref{eq:Refinement:OCP:OCP-QoI} to \eref{eq:Refinement:OCP:OCP-QoI-delta} does not meaningfully affect the supremum.


\section{Conclusions and Future Work} \label{sec:conclusion}

This paper has derived rigorous sensitivity analysis results for solutions of optimal control problems and related QoIs with respect to state- and control-dependent component functions
in the dynamics and objective function. These new sensitivity analyses were combined with pointwise estimates for the approximation error of the expensive-to-evaluate component functions to obtain a sensitivity-based approximate upper bound on the error in a QoI. This information can be used to assess the quality of the computed solution. In future work, these sensitivity-based estimates will be used to define an acquisition function that determines where to query the true/high-fidelity model to adaptively refine surrogate models for maximal improvement in solution quality.

%

\bibliographystyle{alphaurl}
\bibliography{references}

\begin{thebibliography}{VASM19}

\bibitem[BM00]{CBueskens_HMaurer_2000a}
C.~B{\"u}skens and H.~Maurer.
\newblock S{QP}-methods for solving optimal control problems with control and
  state constraints: adjoint variables, sensitivity analysis and real-time
  control.
\newblock {\em J. Comput. Appl. Math.}, 120(1-2):85--108, 2000.
\newblock SQP-based direct discretization methods for practical optimal control
  problems.
\newblock \href {https://doi.org/10.1016/S0377-0427(00)00305-8}
  {\path{doi:10.1016/S0377-0427(00)00305-8}}.

\bibitem[Bre11]{HBrezis_2011a}
H.~Brezis.
\newblock {\em Functional Analysis, {S}obolev Spaces and Partial Differential
  Equations}.
\newblock Universitext. Springer, New York, 2011.
\newblock URL: \url{http://dx.doi.org/10.1007/978-0-387-70914-7}, \href
  {https://doi.org/10.1007/978-0-387-70914-7}
  {\path{doi:10.1007/978-0-387-70914-7}}.

\bibitem[BS00]{JFBonnans_AShapiro_2000}
J.~F. Bonnans and A.~Shapiro.
\newblock {\em Perturbation Analysis of Optimization Problems}.
\newblock Springer Verlag, Berlin, Heidelberg, New York, 2000.
\newblock \href {https://doi.org/10.1007/978-1-4612-1394-9}
  {\path{doi:10.1007/978-1-4612-1394-9}}.

\bibitem[Can22]{JRCangelosi_2022a}
J.~R. Cangelosi.
\newblock Trajectory optimization of hypersonic vehicles via a {R}adau
  pseudospectral method.
\newblock Master's thesis, Department of Computational Applied Mathematics and
  Operations Research, Rice University, Houston, TX, December 2022.
\newblock URL: \url{https://hdl.handle.net/1911/114902}.

\bibitem[CH24]{JRCangelosi_MHeinkenschloss_2024a}
J.~R. Cangelosi and M.~Heinkenschloss.
\newblock Sensitivity of {ODE} solutions with respect to component functions in
  the dynamics.
\newblock {\em arXiv:2411.09655v2}, 2024.
\newblock \href {https://doi.org/10.48550/arXiv.2411.09655}
  {\path{doi:10.48550/arXiv.2411.09655}}.

\bibitem[CHNA24]{JRCangelosi_MHeinkenschloss_JTNeedels_JJAlonso_2024a}
J.~R. Cangelosi, M.~Heinkenschloss, J.~T. Needels, and J.~J. Alonso.
\newblock Simultaneous design and trajectory optimization for boosted
  hypersonic glide vehicles.
\newblock In {\em Paper AIAA 2024-0375. 2024 AIAA Science and Technology Forum
  and Exposition (AIAA SciTech Forum)}, 2024.
\newblock \href {https://doi.org/10.2514/6.2024-0375}
  {\path{doi:10.2514/6.2024-0375}}.

\bibitem[CT15]{ECasas_FTroltzsch_2015a}
E.~Casas and F.~Tr\"oltzsch.
\newblock Second order optimality conditions and their role in {PDE} control.
\newblock {\em Jahresber. Dtsch. Math.-Ver.}, 117(1):3--44, 2015.
\newblock \href {https://doi.org/10.1365/s13291-014-0109-3}
  {\path{doi:10.1365/s13291-014-0109-3}}.

\bibitem[Dei85]{KDeimling_1985}
K.~Deimling.
\newblock {\em Nonlinear Functional Analysis}.
\newblock Springer-Verlag, Berlin, Heidelberg, New-York, 1985.
\newblock \href {https://doi.org/10.1007/978-3-662-00547-7}
  {\path{doi:10.1007/978-3-662-00547-7}}.

\bibitem[Don83]{ALDontchev_1983}
A.~L. Dontchev.
\newblock {\em Perturbations, approximations and sensitivity analysis of
  optimal control systems}, volume~52 of {\em Lecture Notes in Control and
  Information Sciences}.
\newblock Springer-Verlag, Berlin, 1983.
\newblock \href {https://doi.org/10.1007/BFb0043612}
  {\path{doi:10.1007/BFb0043612}}.

\bibitem[Fia83]{AVFiacco_1983}
A.~V. Fiacco.
\newblock {\em Introduction to Sensitivity and Stability Analysis in Nonlinear
  Programming}.
\newblock Mathematics in Science and Engineering, Vol.~165. Academic Press, New
  York, 1983.

\bibitem[Fil88]{AFFilippov_1988a}
A.~F. Filippov.
\newblock {\em Differential Equations with Discontinuous Righthand Sides},
  volume~18 of {\em Mathematics and its Applications (Soviet Series)}.
\newblock Kluwer Academic Publishers Group, Dordrecht, 1988.
\newblock Translated from the Russian.
\newblock \href {https://doi.org/10.1007/978-94-015-7793-9}
  {\path{doi:10.1007/978-94-015-7793-9}}.

\bibitem[Ger12]{MGerdts_2012a}
M.~Gerdts.
\newblock {\em Optimal Control of {ODE}s and {DAE}s}.
\newblock de Gruyter Textbook. Walter de Gruyter \& Co., Berlin, 2012.
\newblock URL: \url{http://dx.doi.org/10.1515/9783110249996}, \href
  {https://doi.org/10.1515/9783110249996} {\path{doi:10.1515/9783110249996}}.

\bibitem[GP11]{LGruene_JPannek_2011a}
L.~Gr\"une and J.~Pannek.
\newblock {\em Nonlinear Model Predictive Control. {T}heory and Algorithms}.
\newblock Communications and Control Engineering Series. Springer, London,
  2011.
\newblock URL: \url{http://dx.doi.org/10.1007/978-0-85729-501-9}, \href
  {https://doi.org/10.1007/978-0-85729-501-9}
  {\path{doi:10.1007/978-0-85729-501-9}}.

\bibitem[GV07]{RGriesse_BVexler_2007a}
R.~Griesse and B.~Vexler.
\newblock Numerical sensitivity analysis for the quantity of interest in
  {PDE}-constrained optimization.
\newblock {\em SIAM J. Sci. Comput.}, 29(1):22--48, 2007.
\newblock \href {https://doi.org/10.1137/050637273}
  {\path{doi:10.1137/050637273}}.

\bibitem[Hv25]{JHart_BvanBloemenWaanders_2025a}
J.~Hart and B.~{van Bloemen Waanders}.
\newblock Hyper-differential sensitivity analysis with respect to model
  discrepancy: Posterior optimal solution sampling.
\newblock {\em Foundations of Data Science}, 7(1):99--133, 2025.
\newblock \href {https://doi.org/10.3934/fods.2024027}
  {\path{doi:10.3934/fods.2024027}}.

\bibitem[HvHP23]{JHart_BvanBloemenWaanders_LHood_JParish_2023a}
J.~Hart, B.~{van Bloemen Waanders}, L.~Hood, and J.~Parish.
\newblock Sensitivity-driven experimental design to facilitate control of
  dynamical systems.
\newblock {\em J. Optim. Theory Appl.}, 196(3):855--881, 2023.
\newblock \href {https://doi.org/10.1007/s10957-023-02172-w}
  {\path{doi:10.1007/s10957-023-02172-w}}.

\bibitem[Jah07]{JJahn_2007a}
J.~Jahn.
\newblock {\em Introduction to the Theory of Nonlinear Optimization}.
\newblock Springer Verlag, Berlin, Heidelberg, New York, third edition, 2007.
\newblock URL: \url{http://doi.org/10.1007/978-3-540-49379-2}, \href
  {https://doi.org/10.1007/978-3-540-49379-2}
  {\path{doi:10.1007/978-3-540-49379-2}}.

\bibitem[KB08]{SKameswaran_LTBiegler_2008a}
S.~Kameswaran and L.~T. Biegler.
\newblock Convergence rates for direct transcription of optimal control
  problems using collocation at {R}adau points.
\newblock {\em Comput. Optim. Appl.}, 41(1):81--126, 2008.
\newblock URL: \url{http://doi.org/10.1007/s10589-007-9098-9}, \href
  {https://doi.org/10.1007/s10589-007-9098-9}
  {\path{doi:10.1007/s10589-007-9098-9}}.

\bibitem[Mal93]{KMalanowski_1993a}
K.~Malanowski.
\newblock Two-norm approach in stability and sensitivity analysis of
  optimization and optimal control problems.
\newblock {\em Adv. Math. Sci. Appl.}, 2(2):397--443, 1993.

\bibitem[Mal95]{KMalanowski_1995a}
K.~Malanowski.
\newblock Stability and sensitivity of solutions to nonlinear optimal control
  problems.
\newblock {\em Appl. Math. Optim.}, 32(2):111--141, 1995.
\newblock \href {https://doi.org/10.1007/BF01185227}
  {\path{doi:10.1007/BF01185227}}.

\bibitem[Mal97]{KMalanowski_1997a}
K.~Malanowski.
\newblock Sufficient optimality conditions for optimal control subject to state
  constraints.
\newblock {\em SIAM J. Control Optim.}, 35(1):205--227, 1997.
\newblock \href {https://doi.org/10.1137/S0363012994267637}
  {\path{doi:10.1137/S0363012994267637}}.

\bibitem[Mau81]{HMaurer_1981}
H.~Maurer.
\newblock First and second order sufficient optimality conditions in
  mathematical programming and optimal control.
\newblock {\em Math. Programming Stud.}, 14:163--177, 1981.
\newblock \href {https://doi.org/10.1007/bfb0120927}
  {\path{doi:10.1007/bfb0120927}}.

\bibitem[MP94]{HMaurer_HJPesch_1994a}
H.~Maurer and H.~J. Pesch.
\newblock Solution differentiability for nonlinear parametric control problems.
\newblock {\em SIAM J. Control Optim.}, 32(6):1542--1554, 1994.
\newblock \href {https://doi.org/10.1137/S0363012992232269}
  {\path{doi:10.1137/S0363012992232269}}.

\bibitem[NA24]{JTNeedels_JJAlonso_2024a}
J.~T. Needels and J.~J. Alonso.
\newblock Trajectory-informed sampling for efficient construction of
  multi-fidelity surrogate models for hypersonic vehicles.
\newblock In {\em AIAA SCITECH 2024 Forum}, 2024.
\newblock URL: \url{http://doi.org//10.2514/6.2024-1013}, \href
  {https://doi.org/10.2514/6.2024-1013} {\path{doi:10.2514/6.2024-1013}}.

\bibitem[Pet74]{DWPeterson_1974a}
D.~W. Peterson.
\newblock On sensitivity in optimal control problems.
\newblock {\em J. Optim. Theory Appl.}, 13:56--73, 1974.
\newblock \href {https://doi.org/10.1007/BF00935609}
  {\path{doi:10.1007/BF00935609}}.

\bibitem[RMD24]{JBRawlings_DQMayne_MMDiehl_2024a}
J.~B. Rawlings, D.~Q. Mayne, and M.~M. Diehl.
\newblock {\em Model Predictive Control: {T}heory, Computation, and Design}.
\newblock Nob Hill Publishing, Santa Barbara, CA, 2nd, 5th printing edition,
  2024.
\newblock Available at \url{https://sites.engineering.ucsb.edu/~jbraw/mpc}
  (accessed January 5, 2025).

\bibitem[SK09]{IMSobol_SKucherenko_2009a}
I.~M. Sobol and S.~Kucherenko.
\newblock Derivative based global sensitivity measures and their link with
  global sensitivity indices.
\newblock {\em Math. Comput. Simulation}, 79(10):3009--3017, 2009.
\newblock \href {https://doi.org/10.1016/j.matcom.2009.01.023}
  {\path{doi:10.1016/j.matcom.2009.01.023}}.

\bibitem[Sob01]{IMSobol_2001a}
I.~M. Sobol.
\newblock Global sensitivity indices for nonlinear mathematical models and
  their {M}onte {C}arlo estimates.
\newblock {\em Math. Comput. Simulation}, 55(1-3):271--280, 2001.
\newblock URL: \url{https://doi.org/10.1016/S0378-4754(00)00270-6}.

\bibitem[Tr{\"o}10]{FTroeltzsch_2010a}
F.~Tr{\"o}ltzsch.
\newblock {\em Optimal Control of Partial Differential Equations: {T}heory,
  Methods and Applications}, volume 112 of {\em Graduate Studies in
  Mathematics}.
\newblock American Mathematical Society, Providence, RI, 2010.
\newblock URL: \url{http://dx.doi.org/10.1090/gsm/112}, \href
  {https://doi.org/10.1090/gsm/112} {\path{doi:10.1090/gsm/112}}.

\bibitem[VASM19]{MVohra_AAlexanderian_CSafta_SMahadevan_2019a}
M.~Vohra, A.~Alexanderian, C.~Safta, and S.~Mahadevan.
\newblock Sensitivity-driven adaptive construction of reduced-space surrogates.
\newblock {\em J. Sci. Comput.}, 79(2):1335--1359, 2019.
\newblock \href {https://doi.org/10.1007/s10915-018-0894-4}
  {\path{doi:10.1007/s10915-018-0894-4}}.

\bibitem[Zei95]{EZeidler_1995b}
E.~Zeidler.
\newblock {\em Applied Functional Analysis. Main Principles and Their
  Applications}.
\newblock Applied Mathematical Sciences, Vol.~109. Springer Verlag, Berlin,
  Heidelberg, New-York, 1995.
\newblock \href {https://doi.org/10.1007/978-1-4612-0821-1}
  {\path{doi:10.1007/978-1-4612-0821-1}}.

\end{thebibliography}

\end{document}